\def\supp{\mathop{\rm supp}\nolimits}
\newtheorem{theorem}{Theorem}[section]
\newtheorem{lemma}[theorem]{Lemma}
\newtheorem{proposition}[theorem]{Proposition}
\newtheorem{corollary}[theorem]{Corollary}
\newtheorem{definition}[theorem]{Definition}
\newtheorem{remark}[theorem]{Remark}
\newtheorem{example}[theorem]{Example}
\newtheorem{assumption}[theorem]{Assumption}
\definecolor{Red}{cmyk}{0,1,1,0}
\definecolor{Blue}{cmyk}{1,1,0,0}
\title{Applications  of Variable Discounting Dynamic Programming to
Iterated Function Systems\\ and Related Problems\vspace*{0.7cm}}
\author{\large
  L. Cioletti
  \\[-0.15cm]
  \footnotesize Departamento de Matem\'atica - UnB
  \\[-0.15cm]
  \footnotesize 70910-900, Bras\'ilia, Brazil
  \\[-0.15cm]
  \footnotesize\texttt{cioletti@mat.unb.br}
  \and
  \large
  Elismar R. Oliveira
  \\[-0.15cm]
  \footnotesize Departamento de Matem\'atica - UFRGS
  \\[-0.15cm]
  \footnotesize 91509-900, Porto Alegre, Brazil
  \\[-0.15cm]
  \footnotesize\texttt{elismar.oliveira@ufrgs.br}
}
\date{}
\begin{document}

\makeatletter
\def\blfootnote{\gdef\@thefnmark{}\@footnotetext}
\let\@fnsymbol\@roman
\makeatother

\maketitle

\begin{abstract}
We study existence and uniqueness of the fixed points solutions of
a large class of non-linear variable discounted transfer operators
associated to a sequential decision-making process.
We establish regularity properties of these solutions,
with respect to the immediate return and the variable discount.
In addition, we apply our methods to reformulating and solving,
in the setting of dynamic programming, some central variational problems
on the theory of iterated function systems, Markov decision processes, discrete Aubry-Mather theory, Sinai-Ruelle-Bowen measures, fat solenoidal attractors, and ergodic optimization.
\end{abstract}

\blfootnote{\textup{2010} \textit{Mathematics Subject Classification}: 	37Axx; 37Dxx; 49Lxx}
\blfootnote{\textit{Keywords}:
Bellman-Hamilton-Jacobi equations, Dynamic Programming, Thermodynamic Formalism, Ergodic Theory,
Transfer Operator, Eigenfunctions.}

\section{Introduction}

The abstract theory of dynamic programming (DP for short) is a powerful
tool for analysis of decision-making problems.
This paper aims to strength some of fundamental theorems in this theory
in order to prove new results on existence and uniqueness of variational problems,
arising in Ergodic Theory and Iterated Function Systems,
within a unified framework.

\break

As motivation and to illustrate the applicability of our theorems,
we explain below how to reformulate some of very important
variational problems on:
\begin{itemize}
	\item decision problems for iterated function systems (IFS);
	\item Markov decision process;
	\item discrete Aubry-Mather theory;
	\item Sinai-Ruelle-Bowen (SRB) measures and fat solenoidal attractors;
	\item ergodic optimization;
\end{itemize}
in the language of DP so that their solutions can be obtained by
straightforward applications of our main results.

\bigskip

Since the pioneering work of Bellman \cite{MR0090477}
the application of this theory has been growing fast,
and nowadays it is a well developed subject and a
standard tool for some researchers in pure and applied Mathematics.
It has also been used in engineering problems,
optimal control theory and machine learning, just
to name a few, see
\cite{
MR934442,MR3204932,MR730512,MR1126131,
MR905046,IEEE4358769,MR3005885,MR3248091,
MR1859323,MR2746549}
and references therein.

Successful applications of this theory in Dynamical Systems
were obtained by the so-called {\it discounted methods}.
In \cite{MR1841880} this method is applied to several problems on
thermodynamic formalism as well as in the study of maximizing measures
(ergodic optimization) for expanding endomorphisms on metric spaces.
In \cite{MR2864625,MR3377291} this method was adapted to study
Statistical Mechanics models in one-dimensional one-sided lattices.
In these works, the authors proved existence of particular discounted
limits of solutions of the Bellman equation and obtain
the maximal eigenvalue and a positive eigenfunctions of the Ruelle operator,
and subactions.

Infinite dimensional linear DP problems
are considered in \cite{MR2128794}, \cite{MR2458239}, \cite{MR2761072}
and \cite{MR3135643} in both discrete and continuous setting. In these works a
connection between DP and the theory of viscosity solutions
\cite{MR1473840,MR1451248,MR1650195,MR1650261}
are explored to obtain new results on Aubry-Mather problem \cite{MR1166538,MR1384478} and
related topics as the Hamilton-Jacobi equation.

Here we extended the recently developed
theory of variable discount in DP \cite{MR3248091}
to broaden its range of applications.
Two central problems in our paper are the following ones.
Given a sequential decision-making process $S=\{X, A, \Psi, f, u, \delta\}$
(Definition \ref{def-DMP}), we study the existence and uniqueness of
the fixed point solutions of the variable discounted Bellman's equation
\[
v(x)
=
\sup_{a \in \Psi(x)} u(x, a)+\delta(v(f(x,a)))
\]
as well as the fixed point solutions of the variable discounted transfer operator
\[
w(x)= \ln \int_{a \in \Psi(x)}
\exp\big(\, u(x, a)+\delta(w(f(x,a)))\, \big)\,   d\nu_{x}(a).
\]
In addition, regularity properties of the solutions $v$ and $w$, with respect to $u$
and the variable discount $\delta$ are determined.

After discussing some results on variable discount, we present
new results on aggregator function associated
to the Ruelle operator.
We believe that our results about discounted limits,
in Section \ref{section-discounted-limits},
provide truly new insights into the behavior of decision-making problems.
These insights are clear when the variable discount function vanishes,
because it allows the \textit{future rewards} function play a major role.
It is remarkable fact that the Bellman equation survives
on this general setting and produces
a new equation capable of explaining the
behavior of decision-making problems.

In our opinion this paper will be of potential interest to the readers
working on variational problems in Dynamical Systems such
as ergodic optimization, thermodynamical formalism,
Aubry-Mather theory, Lagrangian mechanics,
Hamilton-Jacobi equations via viscosity solutions, etc.
Nonetheless, some methods presented here
can be useful in Analysis, random dynamics and
many other related fields.

\bigskip

In what follows, we explain within DP framework the statement
of some central problems on the topics mentioned in the beginning
of this section.
Before proceed, we shall introduce
some basic notations. Here $X=(X,d)$ always denotes a complete metric space and
$C(X,\mathbb{R})$, $C_{b}(X,\mathbb{R})$ stands for
the space of all real continuous and real bounded continuous functions
on $X$, respectively. Of course, if $X$ is compact then
$C(X,\mathbb{R})= C_{b}(X,\mathbb{R})$.
Both $C(X,\mathbb{R})$ and $C_{b}(X,\mathbb{R})$
are endowed with their standard supremum norm and regarded as Banach spaces.
The space of all Borel probability measures over $X$ is denoted
by $\mathscr{P}(X)$. If $X$ is a compact space and $T:X\to X$
is a continuous mapping, then we denote by $\mathscr{P}_{T}(X)$ the space of all
$T$-invariant Borel probability measures defined over $X$.
These spaces are endowed
with their standard weak-$*$ topology.

\subsubsection*{Decision Problems for IFS}
A deterministic decision problem controlled by an IFS
$\{\phi_a:X\to X, \, a\in A\}$ can be described as follows.
The state of the system at time $n$ is a point $x_n\in X$
and determined by the following rules.
We give an initial state $x_0$. At each discrete time $n\geq 0$,
a point $a\in A$ (the set of possible actions)
is chosen (by some agent) and the state changes from $x_{n}$
to a new state $x_{n+1}:=\phi_{a}x_{n}$.
There is a reward, given by a real valued function $c(x_{n},a)$,
associated to taking action $a$, when system is in the
state $x_{n}$ and also a discount factor $0<\lambda<1$, which represents
the relevance of the first choices.
In this setting, an infinite horizon decision problem takes the form
\[
V(x_0) \;
=
\displaystyle \;
\sup\left\{
\sum_{n = 0}^{\infty} \lambda^n c(x_n, a_{n}):
\ (a_0,a_1,\ldots)\in A^{\mathbb{N}} \ \text{and }\  x_{n+1}=\phi_{a_n}x_{n}
\right\}
\]
The dynamic programming theory explains how to break this decision problem into smaller subproblems,
leading to Bellman's principle of optimality
\[
V(x)
=
\max_{ a \in A } \left\{ c(x ,a ) + \lambda  V(\phi_{a } x )\right\},
\]
known as Bellman's equation.

\subsubsection*{Markov Decision Process (MDP)}

An example of stochastic dynamic programming problem
is a Markov Decision Process (MDP) controlled by an IFS
$\{\phi_a:X \to X, \, a\in A\}$.
A sample of this decision process is a
feasible history
$(x_0, a_0, x_{1},a_1,\ldots)$, where $x_{n+1}=\phi_{a_{n}} x_{n}$.
In this setting, we fix an ordered quadruple
$(X, A,p,r)$, where $X$ is a set of states,
$A$ is a set of available actions, $p$ is a probability measure such that
$p(x_{n+1}=\phi_{a_{n}} x_{n} | x_{n}=x,a_{n}=a)$ is the probability
that $x$ evolves from $x_{n}=x$  to  the state
$x_{n+1}=\phi_{a} x$, by taking the action $a_{n}=a$,
and $r$ is a function $r: X\times A \to \mathbb{R}$, where $r(x,a)$
is a reward for taking action $a$ at the state $x$.
Note that in this context, the aggregation function will be a random variable.

A central problem in MDP is to find a \textit{policy} for the decision maker,
which is a function $\pi: X \to A^{\mathbb{N}}$,
specifying the actions $(a_0,a_1,\ldots)$ that should be taken, when the
system is in the state $x$. The goal is to find a policy $\pi$
maximizing the expected discounted sum, over an infinite horizon
\[
\mathbb{E}[\sum^{\infty}_{k=0} {\lambda^k r(x_k, a_k)}],
\]
where the expectation is taken with respect to the law of
the Markov chain defined by the above transition rates, and
$\lambda$ is the discount factor
satisfying $0 < \lambda < 1$ and $x_{k+1}=\phi_{a_{k}} x_k$, $x_0 =x$.
When there exists a solution
$V(x)=\max_{\pi}\mathbb{E}[\sum^{\infty}_{k=0} {\lambda^k c(x_k, y_k)}]$
for this problem it satisfies the stochastic discounted Bellman equation
\[
V(x)
=
\max_{a\in A} \left\{c(x, y) + \lambda \mathbb{E}[V(\phi_{a}x)] \right\}.
\]
For a comprehensive survey on MDP, see \cite{MR1270015}.

\subsubsection*{Discrete Aubry-Mather Problem}
In Lagrangian Mechanics, the Aubry-Mather problem \cite{MR1166538,MR1384478}
consists in finding probability measures defined on
the tangent fiber bundle $TM$ of a
manifold $M$ that minimizes the action of a convex and
superlinear Lagrangian $L: TM \to \mathbb{R}$, of class $C^2$,
that is,
\[
\inf_{\mu} \int_{TM} L(x,v)\, d\mu(x,v).
\]

In \cite{MR2128794} the author considers the case
$M=\mathbb{T}^{n}$, the $n$-dimensional torus and
the dynamics
$f: \mathbb{T}^{n} \times \mathbb{R}^{n} \to \mathbb{T}^{n}$
given by $f(x,v)=x+v$.
Define the discrete differential operator with respect to
$f$, acting on a function $g\in C(\mathbb{T}^{n},\mathbb{R})$ as follows
$d_{x}g (v):=g(f(x,v))- g(x)$. The minimization
is taken over the set of holonomic probability measures
\[
\mathcal{H}
:=
\left\{
	\mu\in \mathscr{P}(TM)
	\left|
		\
		\int_{TM}d_{x}g (v)\, d\mu(x,v)=0,
		\quad 	
		\forall g\in C(\mathbb{T}^{n},\mathbb{R})
	\right.
\right\}.
\]
By Fenchel-Rockafellar duality theorem, see \cite{MR0187062} and \cite{MR2128794}, we have
\[
-\inf_{\mu\in \mathcal{H}} \int_{TM} L(x,v)\, d\mu(x,v)
=
\inf_{g \in C(\mathbb{T}^{n},\mathbb{R})}
\sup_{(x,v)\in \mathbb{T}^{n} \times \mathbb{R}^{n} }
- d_{x}g (v) - L(x,v).
\]
This problem is related to one of finding the solutions of the discrete
Hamilton-Jacobi-Bellman equation
\[
\overline{H}=\sup_{v \in \mathbb{R}^{n}} -d_{x}g (v) - L(x,v),
\]
commonly solved by using viscosity solutions methods,
which is a dynamic programming problem associated to Bellman's operator
\[
T_{\alpha}(u)=\inf_{v \in \mathbb{R}^{n}} e^{-\alpha}u(f(x,v)) + L(x,v),
\]
for $\alpha >0$.
This operator defines a uniform contraction on a suitable Banach space
and its unique fixed point is the unique viscosity solution of the
Bellman's equation
\[
u_{\alpha}(x)
=
\inf_{v \in \mathbb{R}^{n}} e^{-\alpha}u_{\alpha}(f(x,v)) + L(x,v).
\]

In \cite{MR2128794} it is shown that
$u_{\alpha}(x) -\min u_{\alpha} \to u(x)$ and
$(1-e^{-\alpha})\min u_{\alpha} \to \overline{H}$,
when $\alpha \to 0$, and furthermore it is shown that
the limit function $u$ satisfies the equation
$u(x) = \inf_{v \in \mathbb{R}^{n}} u(x + v) + L(x, v) + \overline{H}$,
that is,
\[
\overline{H}
=
\sup_{v \in \mathbb{R}^{n}} u(x) - u(f(x,v)) - L(x,v)
=
\sup_{v \in \mathbb{R}^{n}} -d_{x}u (v) - L(x,v)
=
H(x, d_x u),
\]
where the Hamiltonian $H$ is the Legendre transform of $-L$.
Actually, in \cite{MR2128794}, the discount is
$T_{\alpha}(u)=e^{-\alpha}\inf_{v \in \mathbb{R}^{n}} u(f(x,v)) + L(x,v)$,
but the reasoning is exactly the same in both cases.

\subsubsection*{SRB-measures and Fat Solenoidal Attractors}
Sums controlled by IFS are also used to characterize the
boundary of attractors, of certain skew maps, and
to show when the SRB-measures are absolutely continuous. We recall that a skew map is a map $F: X\times Y \to X \times Y$ of the form $F(x,y)=(F_1(x), F_2(x,y))$, where $F_1$ is a self-map of $X$.
In \cite{MR1862809}, the author study the attractor of
the map $F: \mathbb{S}^1 \times \mathbb{R} \to \mathbb{S}^1 \times \mathbb{R}$ given by
$F(x,y) = ( T(x) , \lambda \, y + f(x))$,
where $T(x)=2x \mod 1$, $y \in \mathbb{R}$
and $f:\mathbb{S}^1 \to \mathbb{R}$ is a $C^2$ potential.

For a fixed $a=(a_0, a_1,  \ldots ) \in \{0,1\}^{\mathbb{N}}$
define
$
\phi_{k,a} x
:=
\phi_{a_{k-1}} \circ \phi_{a_{k-2}}\circ\ldots\circ\phi_{a_0}x
$,
where $\phi_i$, $i=0,1$,
are the inverse branches of $T(x)=2x\mod 1$.
A straightforward computation shows that for any $n\in\mathbb{N}$ we have
\[
F^{n}(\phi_{n,a} x, y)
=(x, \lambda^{n} y + \lambda^{0} f(\phi_{a_0} x)
+ \cdots +
\lambda^{n} f(\phi_{a_{n}}\cdots\phi_{a_0}  x)).
\]
The expression in rhs above lead us naturally
to consider the discounted controlled sums
given by $S(x,a):=\sum \lambda^{k} f(\phi_{k,a} x).$ 
In \cite{MR1862809} (see also \cite{MR2201152}
for topological properties of the attractor)
the author gives a description of the SRB measure,
by analyzing  $S(x,a)$ and conjectured
that the optimal return function $\sup_a S(x,a)$
can be used to describe the boundary of the attractor.
This conjecture was partially solved in \cite{2014arXiv1402.7313L},
assuming that the potential $f$ satisfies a certain twist condition.
A natural question arises when we change the skew map $F$ by a
non uniform hyperbolic one, with variable discount such as
$G(x,y)= (T(x) , \ln(1+y) + f(x))$ ( note that $\{1,2\}$ is always contained
in the spectrum of $DG(x,0)$ ).
This situation requires a variable discounted dynamic programming approach.

\subsubsection*{Ergodic Optimization}

A central problem in ergodic optimization consists in
finding an optimal invariant measure attaining the supremum
\[
m=
\sup_{\mu\in \mathscr{P}_{T}(X)}
\int_{X} f\, d\mu,
\]
where $(X, d)$ is a metric space, $T:X \to X$ is a continuous transformation
and $f: X \to \mathbb{R}$ is a given potential.

For example, in case where $X=\mathbb{R}/\mathbb{Z}$ and the transformation
$T:X \to X$ is the double mapping,
the ergodic optimization problem can be viewed as a decision problem
for IFS as follows. We take the IFS $\{\phi_0,\phi_1\}$,
where $\phi_{0}x=(1/2)x$ and $\phi_{1}x=(1/2)x +1/2$,
the set of possible actions is $A= \{0, 1\}$ and the immediate return
$c(x,a):= f(\phi_{a} x)$.

Under fairly general conditions on the potential $f$,
we can prove several theorems about the
support of maximizing measures.
For example, the solutions of Bellman's equation
\[
b(x) = \max_{a\in A} \{f(\phi_{a} x)  + \lambda \, b(\phi_{a} x )\},
\]
can be characterized if the potential $f$ satisfies a twist condition.
By taking the limit when $\lambda \to 1$, we obtain a subaction $V$ satisfying
\[
V(x) =  \max_{a\in A} \{f(\phi_{a} x)- m + \, V(\phi_{a} x )\}.
\]
The support of a maximizing measure $\nu$, notation $\supp \nu$,
is contained in the set where we have the equality in the above expression,
see \cite{MR1841880,MR3701349} and
the recent survey \cite{ETDSjenkinson_2018}.

\section{Sequential Decision-Making Processes}

In this section we introduce very general setting to
handle some variational problems in DP.
The applications discussed here will be obtained
by considering additional regularity conditions and
specializing the spaces, functions and so on.
Our starting point will be the following definition.

\begin{definition}[Sequential Decision-Making Process]\label{def-DMP}
A sequential decision-making process is an ordered sextuple
$S=\{X, A, \Psi, f, u, \delta\}$, where
	\begin{itemize}[itemsep=0pt, topsep=1pt, partopsep=0pt]
	\item $X$ is a complete metric space, called state space;
	
	\item $A$ is a general metric space, called set of all available actions;
	
	\item $\Psi: X \to 2^A$ is a set-valued function. For all $x\in X$
	the set $\Psi(x) \subseteq A$ is always assumed to be a non-empty compact set
	and called the set of all feasible actions for an agent $x$.
	We shall assume that $\Psi$ is	continuous, with respect to
	the Hausdorff topology on the not-empty compact subsets of $A$.
	
	\item $f:X \times A \to X$ is a continuous map, called transition law for the system;
	
	\item $u:X \times A \to \mathbb{R}$ is continuous function and
	$u(x,a)$ is called the immediate reward or return associated with taking
	the action $a$ in the state $x$;
	
	\item $\delta:D\subset \mathbb{R} \to \mathbb{R}$,
	is an increasing continuous function called discount function.
	It represents the relevance of taking an action at the next step.
	\end{itemize}
\end{definition}

Although linear discount, by a factor $\beta \in (0,1)$, can be employed to solve
several problems in DP, it may not be a suitable tool to handle some other complicated problems.
A natural alternative would be consider variable discount factor or even a variable discount function.
In order to give a precise definition of this concept,
let us introduce the notion of a generalized modulus of contraction.

\subsection{Variable Discount Functions}

\begin{definition} \label{witness function}
A generalized modulus of contraction
for a function $\delta:D\subset \mathbb{R} \to \mathbb{R}$ is
an increasing function $\gamma:[0,\infty) \to [0,\infty)$
such that for all $t\geq 0$ the $n$-th iterate $\gamma^n(t)\to 0$, when $n\to\infty$, and
\[
|\delta(t_2) - \delta(t_1)| \leq \gamma(|t_2 - t_1|)
\]
for any $t_1, t_2 \in D$.
\end{definition}

Any function $\gamma$ as above satisfies $\gamma(0)=0$.
Indeed, if $\gamma(0)=\gamma_0 >0$
the monotonicity of $\gamma$ implies that
$\liminf \gamma^n(t)\geq \gamma_0 >0$.
By using a similar reasoning, we can prove that $\gamma(t)< t$,
for all $t>0$.

\begin{definition}[Variable discount function]\label{discount function}
A function $\delta:D\subset \mathbb{R} \to \mathbb{R}$
will be called a variable discount function if it has a generalized modulus
of contraction  $\gamma:[0,\infty) \to [0,\infty)$.
A variable discount function $\delta$ is called
\begin{itemize}[itemsep=0pt, topsep=7pt, partopsep=5pt]
\item[a)] idempotent if $\delta=\gamma$, for some generalized modulus
of contraction $\gamma$;
\item[b)] subadditive if
$\delta(t_1 + t_2) \leq \delta(t_1 ) + \delta( t_2)$,
for any $t_1, t_2 \in D$ such that $t_1+t_2\in D$.
\end{itemize}
\end{definition}

\begin{proposition}
Let $\delta:[0,\infty) \to [0,\infty)$ be a continuous and increasing function
satisfying $\delta^n(t)\to 0$, when $n\to\infty$.
If $\delta$ is subadditive then it is idempotent.
\end{proposition}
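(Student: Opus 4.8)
The plan is to show that $\delta$ serves as its own generalized modulus of contraction, which is exactly what ``idempotent'' requires. So I would check the three defining properties of a generalized modulus of contraction (Definition \ref{witness function}) for the function $\gamma:=\delta$. Two of them are free: $\delta$ maps $[0,\infty)$ into $[0,\infty)$ and is increasing by hypothesis, and $\delta^n(t)\to 0$ as $n\to\infty$ for every $t\ge 0$, again by hypothesis. The only thing left is the contraction-type estimate $|\delta(t_2)-\delta(t_1)|\le \delta(|t_2-t_1|)$ for all $t_1,t_2\in D=[0,\infty)$.

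For that estimate I would argue as follows. Fix $t_1,t_2\in[0,\infty)$ and assume without loss of generality that $t_2\ge t_1$. Write $t_2=t_1+(t_2-t_1)$, noting $t_2-t_1\ge 0$ so this decomposition stays in $D$, and apply subadditivity to get
\[
\delta(t_2)\le \delta(t_1)+\delta(t_2-t_1).
\]
Since $\delta$ is increasing we also have $\delta(t_2)-\delta(t_1)\ge 0$, so
\[
|\delta(t_2)-\delta(t_1)|=\delta(t_2)-\delta(t_1)\le \delta(t_2-t_1)=\delta(|t_2-t_1|).
\]
This establishes the estimate, hence $\delta$ is a generalized modulus of contraction for itself, and therefore $\delta$ is idempotent.

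There is essentially no serious obstacle here: the whole content of the proposition is the observation that, once $\delta$ is already known to be increasing with $\delta^n(t)\to 0$, subadditivity is precisely the inequality needed to turn $\delta$ into a modulus of contraction witnessing itself. If anything, the only point worth a remark is that continuity of $\delta$ is not used in the argument — it is inherited from the standing hypotheses but plays no role in the conclusion. I would present the proof in the two short displays above, after first recalling the list of conditions to be verified so the reader sees immediately that only the subadditivity-to-Lipschitz step needs an argument.
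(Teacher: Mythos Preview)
Your proof is correct and follows essentially the same route as the paper's: both decompose the larger argument as a sum and apply subadditivity to obtain $|\delta(t_2)-\delta(t_1)|\le \delta(|t_2-t_1|)$, then invoke the remaining hypotheses to conclude that $\gamma=\delta$ works. Your remark that continuity is not actually used is a nice observation not made explicit in the paper.
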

\begin{proof}
From subadditivity we get, for any pair $x,y\in [0,\infty)$ satisfying $x\geq y$,
the following inequality
\[
\delta(x) -\delta(y)
=
\delta(x-y  + y) -\delta(y) \leq \delta(x-y  )
+ \delta( y) -\delta(y)\leq \delta(x-y).
\]
Similarly, we obtain $\delta(y) -\delta(x)\leq \delta(y-x)$, for $y\geq x$.
Since $\delta$ is an increasing function we get that
$|\delta(x) -\delta(y)|\leq \delta(|x-y|)$.
By taking $\gamma=\delta$, it follows from
the hypothesis that $\delta$ is itself
a modulus of contraction for $\delta$.
\end{proof}

\begin{example}\label{ex1discfunc}
Given $\beta \in(0,1)$, the function $\delta(t):=\beta t$
for $t \in \mathbb{R}$ is a idempotent discount function, because it is linear.
This is the canonical discount function used in dynamic programming.
\end{example}

\begin{example}\label{ex2discfunc}
The function $\delta(t):=\ln(1+t)$ for $t \geq 0$
is a nonlinear idempotent discounted function.
Indeed,
\begin{align*}
|\delta(t_1) - \delta(t_2)|
\leq
|\ln\left(\frac{1+t_1}{1+t_2}\right)|
\leq
\ln\left(1+ \frac{|t_1-t_2|}{1+t_2}\right)
\leq
\ln(1+|t_1-t_2|)
\end{align*}
and $0<\gamma'(t)=1/(1+t) <1$, for all $t>0$ so
$\gamma^n(t)\to 0$, when $n\to\infty$.
Therefore $\gamma(t)=\delta(t)$
is a generalized modulus of contraction.
Note that $\delta$ is also subadditive. Indeed,
$
\delta(t_1 + t_2)
\leq
\ln(1+(t_1 + t_2))
\leq
\ln(1+(t_1 + t_2)+ (t_1 \cdot t_2))
=
\ln((1+t_1)(1 + t_2))
=
\delta(t_1 ) + \delta( t_2).
$
\end{example}

\begin{example}\label{ex3discfunc}
A piecewise linear function $\delta_1: \mathbb{R} \to \mathbb{R}$ defined by
\[
\delta_1(t):=\left\{
    \begin{array}{ll}
      \beta t, & t \leq 1;\\
      \frac{\beta}{2} t + \frac{\beta}{2}, & t >1,
    \end{array}
  \right.
\]
where $\beta \in (0,1)$ is also a
variable discount function with the same generalized
contraction modulus as $\delta(t):=\beta t$.
Additionally, $\delta_1$ is an example of subadditive but
not idempotent variable discount function.
\end{example}

\begin{example}\label{ex4discfunc}
Consider the function $\delta: [0,\infty) \to [0,\infty)$ defined by
\[
\delta(t)=-1+\sqrt{t+1}.
\]
We have that $\delta(0)=0$ and $\delta$ is an increasing function and
for all $t_1,t_2\in [0,\infty)$, we have
\[
|\delta(t_1) - \delta(t_2)|
=
\left|\sqrt{t_1 +1} - \sqrt{t_2 +1}\right|
=
\left|\frac{(t_1 +1) - (t_2 +1)}{\sqrt{t_1 +1} + \sqrt{t_2 +1}}\right|
\leq
\frac{1}{2}|t_1 - t_2|.
\]
By taking $\beta=1/2 \in (0,1)$, we can show that
$\delta$ is a variable discount function
with the same generalized contraction modulus as
$\gamma(t):=\beta t$.
\end{example}

\begin{example}\label{ex5discfunc}
If $\delta: [0,\infty) \to [0,\infty)$ is a $C^{2}$-function such that:
\begin{itemize}
  \item[a)] $\delta(0)=0$;

  \item[b)] $\delta'(0^{+}):=\lim_{t\downarrow 0}\delta'(t)=\beta \in (0,1)$
  and $\delta'(t) > 0$;

  \item[c)] $\delta''(t) \leq 0$.
\end{itemize}
Then $\delta$ is increasing and
$|\delta(t_1) - \delta(t_2)| = \delta'(t_0)|t_1 - t_2| \leq \delta'(0^{+}) |t_1 - t_2|$,
since $\delta'(t_0) \leq \delta'(0^{+})$.
Thus $\gamma(t):=\beta t$ is a generalized contraction modulus for $\delta$.
Note that for every fixed $p > 1$ the function
$\delta(t)
=
-1+(t+1)^{1/p},
$
satisfies conditions \textit{a)}--\textit{c)} with  $\delta'(0^{+})= 1/p$.
This generalizes the Example~\ref{ex4discfunc}, when $p=2$.
\end{example}

The main reason to consider such general variable discounts is
to develop a perturbation theory.
The idea is to consider a parametric family of discounts
$\delta_{n}:[0,+\infty) \to \mathbb{R}$, where $\delta_{n}(t) \to I(t)=t$,
in the pointwise topology, and then
to study the properties of possible limits, when $n\to\infty$,
of the fixed points $v_{n}(x)$ and $w_{n}(x)$.
In this regard, we consider sequences of variable discount decision-making
process $(S_n)_{n\in\mathbb{N}}$, where $S_n=\{X, A, \Psi, f, u, \delta_{n}\}$ is
defined by a continuous and bounded immediate reward
$u:X \times A \to \mathbb{R}$ and sequence of discounts
$(\delta_n)_{n\geq0}$, satisfy some admissibility conditions:
\begin{itemize}
  \item[\textit{a)}] the contraction modulus  $\gamma_{n}$ of the variable discount
  $\delta_{n}$ is also a variable discount function;

  \item[\textit{b)}] $\delta_{n}(0)=0$ and $\delta_{n}(t) \leq t$, for $t\geq 0$.

  \item[\textit{c)}]  $\delta_{n}(t +\alpha) - \delta_{n}(t)\to \alpha $,
  when $n\to\infty$, uniformly in $t>0$, for any fixed constant $\alpha \geq 0$.
\end{itemize}
In Section \ref{section-discounted-limits} we prove
two of the main results of this paper which are Theorem~\ref{bellman subaction},
ensuring the existence of a value $\bar{u} \in [0 , \| u \|_{\infty} ]$ and a function $h$ such that
\[
h(x)= \max_{a \in \Psi(x)} u(x, a)- \bar{u} +  h(f(x,a)),
\]
and Theorem~\ref{ruelle theorem discounted} which guarantees the existence of
a value $k \in [0 , \| u \|_{\infty} ]$ and a function $h$
given by
\[
h(x)=\ln \int_{a \in \Psi(x)} e^{u(x, a)+h(f(x,a)) - k}
         \, d\nu_{x}(a),
\]
such that  $\rho:=e^{k}$  and $\varphi:= e^{h(x)}$ are the maximal
eigenvalue and eigenfunction of the Ruelle operator, that is,
\[
e^{k} e^{h(x)}=\int_{a \in \Psi(x)} e^{u(x, a)} e^{h(f(x,a))} d\nu_{x}(a).
\]
For both results the key hypothesis in $u$
are uniformly $\delta$-boundedness and uniformly $\delta$-domination,
see Definition \ref{gamma continuity}.

Regarding this hypothesis on $u$, we want to stress that we prove in
Theorem~\ref{regularity implies domination} that if $f$ is a contractive
dynamics , that is,
\[
\sup_{a \in A} d_{X}(f(x,a),f(y,a)) \leq \lambda d_{X}(x,y)
\]
and $u(\cdot, a)$ is $C$-Lipschitz (or $\alpha$-H\"older)
then $u$ is uniformly $\delta$-dominated.
If additionally, $\mathrm{diam}(X) < \infty$, then $u$ is uniformly $\delta$-bounded.
In particular, if $v_n$ and $w_n$ are respectively the solutions of
Bellman's equation and the transfer discounted operator equation,
they are uniformly $C(1-\lambda)^{-1}$-Lipschitz
(or $\alpha$-H\"older, with
$
{\rm Hol}_{\alpha}(v_n)={\rm Hol}_{\alpha}(w_n)
= {\rm Hol}_{\alpha}(u) (1-\lambda^{\alpha})^{-1}
$).
This shows that most of the previous results in the literature for IFS or
expanding maps, with either Lipschitz or H\"older weights are particular cases of our theorems,
with constant discounts satisfying
$\delta_{n}(t)=\beta_n t$, where $0<\beta_n <1$ and $\beta_n \to 1$.

\subsection{Generalized Matkowski Contraction Theorem}
In 1975, Janusz Matkowski \cite{MR0412650},
obtained a generalization of Banach's contraction
theorem for a variable contraction map. Before state
this result we need one more definition.

\begin{definition} \label{witness function metric space}
Let $(X,d)$ be a complete metric space and $T:X \to X$ a map.
We say that $T$ is a generalized Matkowski contraction,
if there exists a witness function for $T$, that is,
a non-decreasing function $\varphi:[0,\infty) \to [0,\infty)$
such that $\varphi^n(t)\to 0$, when $n\to\infty$ and
\[
d(T(x),T(y))
\leq
\varphi(d(x,y))
\]
for any $x,y \in X$.
\end{definition}

When the contraction is not fixed, e.g. $d(T(x),T(y))\leq \lambda d(x,y)$, 
the function $\varphi$ witness the fact that $T$ is a generalized 
contraction, e.g. $d(T(x),T(y))\leq \varphi(d(x,y))$. 
In other words, it is not enough to say that $T$ is a generalized contraction, 
we need a witness $\varphi$.

\begin{theorem}[\cite{MR0412650}]	
\label{Matkowski Contrac Theorem}
If $(X,d)$ is a complete metric space and
$T:X \to X$ a generalized Matkowski contraction,
then there exists a unique $x_0 \in X$ such that
$T(x_0)=x_0$ and $d(T^n(x), x_0)\to 0$ for all $x_0 \in X$.
\end{theorem}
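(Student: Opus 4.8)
The plan is to follow Banach's classical argument, replacing the geometric-series estimate with a control provided by the witness function $\varphi$. First I would fix an arbitrary $x \in X$ and consider the orbit $x_n := T^n(x)$; the first task is to show this is a Cauchy sequence. Writing $d_n := d(x_n, x_{n+1})$, the contraction hypothesis gives $d_{n} \le \varphi(d_{n-1})$, and by monotonicity of $\varphi$ and induction $d_n \le \varphi^n(d_0)$, which tends to $0$ as $n \to \infty$ by the defining property of a witness function.

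The main obstacle is that, unlike in the Banach case, $d_n \to 0$ does not automatically yield a Cauchy sequence, since $\varphi$ need not be a linear contraction and we cannot sum a geometric series. The key step is therefore an auxiliary lemma: for every $\varepsilon > 0$ one has $\varphi(\varepsilon) < \varepsilon$ (this is already observed in the excerpt for generalized contraction moduli, and the same elementary argument applies here: if $\varphi(\varepsilon_0) \ge \varepsilon_0$ for some $\varepsilon_0 > 0$ then monotonicity forces $\varphi^n(\varepsilon_0) \ge \varepsilon_0$, contradicting $\varphi^n(\varepsilon_0) \to 0$). Using this, I would show that the closed ball $B[x_N, \varepsilon]$ is invariant under $T$ for $N$ large enough. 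Concretely, pick $N$ so that $d_N < \varepsilon - \varphi(\varepsilon)$; then for $y$ with $d(y, x_N) \le \varepsilon$ we get
\[
d(T(y), x_N) \le d(T(y), x_{N+1}) + d(x_{N+1}, x_N) \le \varphi(d(y, x_N)) + d_N \le \varphi(\varepsilon) + (\varepsilon - \varphi(\varepsilon)) = \varepsilon,
\]
so $T$ maps $B[x_N, \varepsilon]$ into itself. Consequently $x_n \in B[x_N, \varepsilon]$ for all $n \ge N$, hence $d(x_n, x_m) \le 2\varepsilon$ for all $n, m \ge N$, proving $(x_n)$ is Cauchy. By completeness it converges to some $x_0 \in X$.

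Next I would verify that $x_0$ is a fixed point: continuity of $T$ (which follows from $d(T(x), T(y)) \le \varphi(d(x,y))$ together with $\varphi(t) \to 0$ as $t \to 0^+$, itself a consequence of $\varphi(0) = 0$ and monotonicity) gives $T(x_0) = T(\lim x_n) = \lim T(x_n) = \lim x_{n+1} = x_0$. For uniqueness, suppose $T(x_0) = x_0$ and $T(y_0) = y_0$ with $x_0 \ne y_0$; then $d(x_0, y_0) = d(T(x_0), T(y_0)) \le \varphi(d(x_0, y_0)) < d(x_0, y_0)$, a contradiction. Finally, the convergence $d(T^n(x), x_0) \to 0$ for arbitrary starting point $x$ is exactly what the Cauchy-sequence argument established, since the limit of the orbit must coincide with the unique fixed point $x_0$. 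I expect the invariant-ball estimate to be the only non-routine point; everything else is a direct transcription of the classical proof.
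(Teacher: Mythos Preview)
The paper does not actually prove this theorem: it is quoted as a known result from Matkowski's original paper \cite{MR0412650} and used as a black box throughout. So there is no ``paper's own proof'' to compare against.

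That said, your argument is correct and is essentially the standard proof (and in fact the one Matkowski himself gave). The invariant-ball step is indeed the crux: from $d_n \to 0$ alone one cannot conclude Cauchy, and the observation that $\varphi(\varepsilon) < \varepsilon$ for every $\varepsilon > 0$ together with the trapping estimate
\[
d(T(y), x_N) \le \varphi(\varepsilon) + d_N < \varphi(\varepsilon) + (\varepsilon - \varphi(\varepsilon)) = \varepsilon
\]
is exactly the right replacement for the geometric-series bound. One minor comment on your justification of continuity of $T$: the implication ``$\varphi(0)=0$ and monotonicity $\Rightarrow \varphi(t)\to 0$ as $t\to 0^+$'' is not quite right as stated (a non-decreasing function with $\varphi(0)=0$ can jump at $0$). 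The correct reason is the inequality $0 \le \varphi(t) < t$ for $t>0$ that you already established, which gives $\varphi(t)\to 0$ by squeezing; you may want to rephrase that sentence accordingly. Everything else is fine.
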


A weaker version of this theorem was known. It
required the witness function $\varphi$ to be right USC
(instead of non-decreasing) and $\varphi(t)<t$, for all
$t>0$ (instead of $\varphi^n(t)\to 0$, when $n\to\infty$).
However, the set of all contractions where
Theorem~\ref{Matkowski Contrac Theorem} works is wider than this one,
as pointed by Matkowski,
we may apply the theorem for a map $T$, having a witness function
\[
\varphi(t)
:=
\begin{cases}
1 &, t>1;\\
\frac{1}{n+1} &, \frac{1}{n+1}< t \leq\frac{1}{n};\\
0 &, t=0,
\end{cases}
\]
which is not a right USC function.

\subsection{Variable Discounting in Dynamic Programming}\label{dyn prog section}
This section is devoted to present some results of the recent theory
developed by Ja\'{s}kiewicz, Matkowski and Nowak \cite{MR3029480,MR3200713,MR3248091}.
The applications in these works focused on Markov decision processes, and
the theory of optimal economic growth and resource extraction models, but as will be explained below
it has far-reaching consequences.

We shall consider a sequential decision-making process
$S=\{X, A, \Psi, f, u, \delta\}$ as a dynamical system specified as follows:
at the state $x_0$ we take an action $a_0$, and receive an
immediate return $u(x_0,a_0)$ and go forward to the new state
$x_1=f(x_0, a_0)$. Based on it, one decides to take a new action
$a_1 \in \Psi(x_{1})$ and so on.
In this way we obtain a feasible sequence
$(x_0,a_0, x_1, a_1,\ldots) \in (X \times A)^{\mathbb{N}} $
which is a orbit of the dynamical system $S$.

\begin{definition}
The set of all the feasible sequences  of a sequential
decision-making process $S=\{X, A, \Psi, f, u, \delta\}$ is
given by
\[
\Omega
:=
\{
(x_0,a_0, x_1, a_1,\ldots) \in (X \times A)^{\mathbb{N}} |
\; x_{i+1}=f(x_{i}, a_{i}), \;  a_{i} \in \Psi(x_{i})
\}.
\]
\end{definition}

Typically, the above defined set is strictly
contained in the Cartesian pro\-duct, that is,
$\Omega \subsetneq (X \times A)^{\mathbb{N}}$,
unless $\Psi(x)=A$, for all $x \in X$ and $f$ is surjective.
It is useful to define the set of all feasible action sequences
starting from $x_0$,
\[
\Pi(x_0)=\{\bar{a}=(a_i) \in A^\mathbb{N} \; | \; (x_0,a_0, x_1, a_1,\ldots) \in \Omega \}.
\]

We point out that an element in $\Omega$ depends only on the initial point
$x_0$ and on a feasible action sequence $\bar{a}\in \Pi(x_0)$,
so we can use a concise notation:
\[
h_{x_0}(\bar{a})=(x_0,a_0, x_1, a_1,\ldots) \in \Omega.
\]
\begin{proposition} \label{feasible topology}
The set $\Omega \subset (X \times A)^{\mathbb{N}} $
is closed relative to the product topology on $(X \times A )^{\mathbb{N}}$.
\end{proposition}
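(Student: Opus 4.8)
The plan is to show that the complement of $\Omega$ is open in $(X\times A)^{\mathbb{N}}$, or equivalently that $\Omega$ contains the limit of any convergent sequence of its elements. Since $(X\times A)^{\mathbb{N}}$ carries the product topology, convergence of a sequence $(\omega^{(n)})_{n\in\mathbb{N}}$ in $\Omega$ to a point $\omega=(x_0,a_0,x_1,a_1,\ldots)\in(X\times A)^{\mathbb{N}}$ means coordinatewise convergence: writing $\omega^{(n)}=(x_0^{(n)},a_0^{(n)},x_1^{(n)},a_1^{(n)},\ldots)$, we have $x_i^{(n)}\to x_i$ and $a_i^{(n)}\to a_i$ in $X$ and $A$ respectively, for each fixed index $i$. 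I must verify that the limit point satisfies the two defining constraints of $\Omega$: the transition law $x_{i+1}=f(x_i,a_i)$ and the feasibility condition $a_i\in\Psi(x_i)$, for every $i$.

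First I would handle the transition constraint. Fix $i\in\mathbb{N}$. Each $\omega^{(n)}\in\Omega$ satisfies $x_{i+1}^{(n)}=f(x_i^{(n)},a_i^{(n)})$. Since $f:X\times A\to X$ is continuous (by Definition~\ref{def-DMP}) and $(x_i^{(n)},a_i^{(n)})\to(x_i,a_i)$ in $X\times A$, passing to the limit gives $x_{i+1}=\lim_n x_{i+1}^{(n)}=\lim_n f(x_i^{(n)},a_i^{(n)})=f(x_i,a_i)$, using also that $x_{i+1}^{(n)}\to x_{i+1}$. This establishes the transition law for $\omega$ at index $i$, and since $i$ was arbitrary, for all indices.

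Next I would handle the feasibility constraint $a_i\in\Psi(x_i)$. Here I use that $\Psi:X\to 2^A$ is continuous with respect to the Hausdorff topology on non-empty compact subsets of $A$, and in particular that it is upper semicontinuous with closed (compact) values. For a fixed $i$, we have $x_i^{(n)}\to x_i$, and $a_i^{(n)}\in\Psi(x_i^{(n)})$ for all $n$, and $a_i^{(n)}\to a_i$. The standard fact that the graph of an upper semicontinuous set-valued map with closed values is closed then yields $a_i\in\Psi(x_i)$. Concretely: if $a_i\notin\Psi(x_i)$, then since $\Psi(x_i)$ is compact there is an $\varepsilon>0$ with $d_A(a_i,\Psi(x_i))>2\varepsilon$; upper semicontinuity gives a neighborhood of $x_i$ on which $\Psi$ maps into the $\varepsilon$-neighborhood of $\Psi(x_i)$, and for large $n$ both $x_i^{(n)}$ lies in that neighborhood and $d_A(a_i^{(n)},a_i)<\varepsilon$, forcing $d_A(a_i,\Psi(x_i))\le d_A(a_i,a_i^{(n)})+d_A(a_i^{(n)},\Psi(x_i^{(n)}))+\text{(Hausdorff distance term)}<2\varepsilon$, a contradiction. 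Hence $a_i\in\Psi(x_i)$ for every $i$, so $\omega\in\Omega$, which proves $\Omega$ is closed.

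The routine part is the transition-law passage, which is immediate from continuity of $f$. The one step requiring care — and the main (mild) obstacle — is extracting from the Hausdorff-continuity hypothesis on $\Psi$ the closed-graph property; this is where one must be precise about which direction of semicontinuity is needed (upper semicontinuity suffices) and invoke compactness of the values $\Psi(x_i)$ to turn "close in Hausdorff distance" into the membership $a_i\in\Psi(x_i)$. Everything else is a bookkeeping of coordinatewise limits in the product topology.
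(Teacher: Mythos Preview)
Your argument is correct and is in fact more careful than the paper's own proof, which is a two-line sketch invoking completeness of $X$, compactness of $\Psi(x)$, and an ``inductive argument'' on a Cauchy sequence $(h_{x_0^i}(\bar a^i))_i$. The underlying idea is the same --- take a sequence in $\Omega$ converging in $(X\times A)^{\mathbb N}$ and check the limit lies in $\Omega$ --- but you isolate precisely which hypotheses do the work: continuity of $f$ for the transition constraint, and upper semicontinuity of $\Psi$ with compact values (hence closed graph) for feasibility. In particular you do not use completeness of $X$, and indeed closedness of $\Omega$ in the ambient product does not require it; the paper's mention of completeness is either a slip or a tacit conflation of closedness with completeness of $\Omega$ as a subspace. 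Your explicit closed-graph argument for $\Psi$ is exactly the right way to make the feasibility step rigorous.
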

\begin{proof}
Since $X$ is complete and $\Psi(x)$ is compact, for all $x\in X$,
we can obtain, by an inductive argument, a feasible sequence in $\Pi(\lim_i x_0^i)$
for any Cauchy sequence $(h_{x_0^i }(\bar{a}^i))_{i\geq 0}$.
\end{proof}

\begin{remark}
An alternative  way to define
the space $\Omega$ is to introduce it as the set $\Omega':=\{(x, \Pi(x)) \; | \; x \in X\}$.
The set $\Omega'$ is like a fiber bundle and has a natural structure of metric space
\[
d_{\Omega'}((x,\bar{a}), (y,\bar{b})):= d_{X}(x,y) +d_{A}(\bar{a}, \bar{b})
\]
Thus $(\Omega', d_{\Omega'})$ is a complete metric space and the
topology is equivalent to the product topology.
\end{remark}

\begin{definition}\label{rec ut}
Let $u: X \times A \to \mathbb{R}$ be a bounded from above function.
A recursive utility  associated to the immediate rewards $u(x,a)$
with a discount function $\delta$ is a function $U:\Omega \to \mathbb{R}\cup\{-\infty\}$, such that
\[
U(h_{x_0}(\bar{a}))=u(x_0, a_0) +
\delta(U(h_{x_1}\sigma\bar{a}))
\]
for any  history $h_{x_0}(\bar{a})$.
\end{definition}

\begin{definition}\label{induc lim}
Let $u(x,a)$ be an immediate rewards and $\delta$
a discount function. We define,
for any  history $h_{x_0}(\bar{a})$, the associated inductive limit
\[
{\sum_{i}}^{\star}
u(x_i, a_i)
=
\displaystyle\lim_{n \to \infty}
{\sum_{i\in [n]} }^{\star}
u(x_i, a_i),
\]
where
\[
{\sum_{i\in [n]} }^{\star}u(x_i, a_i)
=u(x_0, a_0) + \delta\big(  u(x_1, a_1) + \delta(u(x_2, a_2)+\ldots+\delta(u(x_n,a_n) )\big)
\]
and the notation $[n]$ stands for the interval $\{0,1,\ldots,n\}$ in the set of integers numbers.
\end{definition}
Corollary~\ref{convergence inductive limits} provides
necessary conditions to ensure the existence of the above limit.

The connection between Definitions \ref{rec ut} and \ref{induc lim} is given by the next proposition.
\begin{proposition}
  Let $u(x,a)$ be a bounded from above immediate rewards and $\delta$
a continuous discount function. If
$\sum^{\star}_{i} u(x_i, a_i)$
converges then the function $V:\Omega \to \mathbb{R}\cup\{-\infty\}$
defined by $V(h_{x_0}(\bar{a}))= \sum^{\star}_{i} u(x_i, a_i)$
is a recursive utility.
Reciprocally, if $U:\Omega \to \mathbb{R}\cup\{-\infty\}$
is a bounded from above recursive utility then $U$
is represented by $U(h_{x_0}(\bar{a}))= \sum^{\star}_{i} u(x_i, a_i)$.
\end{proposition}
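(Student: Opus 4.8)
The plan is to establish the two implications separately; the forward direction is a passage to the limit in the recursive definition of the inductive limit, while the converse rests on iterating the generalized modulus of contraction $\gamma$ of $\delta$.

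\emph{Forward direction.} The key observation is that the truncated inductive limit is itself defined recursively: directly from Definition~\ref{induc lim},
\[
{\sum_{i\in[n]} }^{\star} u(x_i,a_i) \;=\; u(x_0,a_0) + \delta\!\left( {\sum_{i\in[n-1]} }^{\star} u(x_{i+1},a_{i+1}) \right),
\]
i.e. the $n$-th partial sum along $h_{x_0}(\bar a)$ equals $u(x_0,a_0)$ plus $\delta$ of the $(n-1)$-st partial sum along the shifted history $h_{x_1}(\sigma\bar a)$. Since $V$ is required to be defined on all of $\Omega$, I may assume the inductive limit converges along every feasible history (cf. Corollary~\ref{convergence inductive limits}); letting $n\to\infty$ and using continuity of $\delta$ then gives $V(h_{x_0}(\bar a)) = u(x_0,a_0) + \delta\big(V(h_{x_1}(\sigma\bar a))\big)$, which is precisely the recursive utility equation of Definition~\ref{rec ut}. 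If one prefers to assume convergence only along the fixed history $h_{x_0}(\bar a)$, the displayed identity shows that $\delta$ evaluated at the shifted partial sums converges; since $\delta$ is increasing and continuous, hence a homeomorphism onto its (closed) image, the shifted partial sums converge as well, and the conclusion follows as before.

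\emph{Converse direction.} Let $U$ be a recursive utility bounded above by $M<\infty$. Unfolding the recursion $U(h_{x_k}(\sigma^k\bar a)) = u(x_k,a_k) + \delta(U(h_{x_{k+1}}(\sigma^{k+1}\bar a)))$ a total of $n+1$ times expresses $U(h_{x_0}(\bar a))$ as a nest of $n+1$ copies of $\delta$ with $U(h_{x_{n+1}}(\sigma^{n+1}\bar a))$ in the innermost slot. This nested expression agrees with ${\sum_{i\in[n]} }^{\star} u(x_i,a_i)$ except that the innermost term $u(x_n,a_n)$ is replaced by $u(x_n,a_n) + \delta(U(h_{x_{n+1}}(\sigma^{n+1}\bar a)))$. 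Cancelling the matching outer summands and applying $|\delta(t_2)-\delta(t_1)| \le \gamma(|t_2-t_1|)$ once for each of the $n$ matched occurrences of $\delta$ yields
\[
\Big| U(h_{x_0}(\bar a)) - {\sum_{i\in[n]} }^{\star} u(x_i,a_i) \Big| \;\le\; \gamma^{n}\!\Big( \big| \delta\big( U(h_{x_{n+1}}(\sigma^{n+1}\bar a)) \big) \big| \Big).
\]
It then remains to bound the arguments $\big| \delta( U(h_{x_{n+1}}(\sigma^{n+1}\bar a)) ) \big|$ uniformly in $n$ by some $B$: an upper bound is immediate from $U\le M$ and monotonicity of $\delta$, and a matching lower bound is available under the standing hypotheses (e.g. when $u$ is bounded the recursion makes $U$ bounded on both sides, and when $D\subseteq[0,\infty)$ one automatically has $\delta\ge 0$). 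Given such a $B$, monotonicity of $\gamma$ gives $\gamma^n(|\delta(\cdots)|) \le \gamma^n(B) \to 0$, so ${\sum_{i\in[n]} }^{\star} u(x_i,a_i) \to U(h_{x_0}(\bar a))$; in particular the inductive limit converges and represents $U$.

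\emph{Main obstacle.} I expect the only delicate point to be the uniform control of the tail quantities $\delta( U(h_{x_{n+1}}(\sigma^{n+1}\bar a)) )$ in the converse direction: in full generality this forces one either to stay within the admissible classes of pairs $(u,\delta)$ used elsewhere in the paper, or to treat the value $-\infty$ by a short separate argument (if $U$ takes the value $-\infty$ somewhere along the history, the recursion propagates it and both sides of the claimed identity are $-\infty$). Everything else is routine unfolding of Definitions~\ref{rec ut} and~\ref{induc lim} together with the continuity and monotonicity of $\delta$.
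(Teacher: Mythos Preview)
Your proposal is correct and follows essentially the same route as the paper: the forward direction passes to the limit in the recursive identity for the partial sums using continuity of $\delta$, and the converse unfolds the recursion and iterates the modulus of contraction $\gamma$ to squeeze the partial sums toward $U$. The paper's version is terser---it writes the tail bound directly as $\gamma^n(U(h_{x_n}\sigma^n\bar a))\le\gamma^n(K)$ without lingering on the lower-bound or $-\infty$ issues you flag---so your added care there is a refinement rather than a different argument.
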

\begin{proof}
For the first part we define $V(h_{x_0}(\bar{a}))= \sum^{\star}_{i} u(x_i, a_i)$.
A simple computation shows that
\begin{align*}
{\sum_{i\in [n]} }^{\star}u(x_i, a_i)
&=
u(x_0, a_0) +  \delta\Big( {\sum_{i\in [n-1]} }^{\star}u(x_{i+1}, a_{i+1})\Big).
\end{align*}
Using the continuity of $\delta$ and taking the limit we obtain \[
V(h_{x_0}(\bar{a}))=u(x_0, a_0) +
\delta(V(h_{x_1}\sigma\bar{a})).
\] Therefore, $V$ is a recursive utility.

Reciprocally, if $U$ is a bounded from above recursive utility,
then we have $U \leq K$ for some $K>0$ and
\begin{align*}
U(h_{x_0}(a))
&=
u(x_0, a_0) + \delta(U(h_{x_1}\sigma\bar{a}))
\\
&=
u(x_0, a_0) + \delta(u(x_1, a_1) + \delta(U(h_{x_2}\sigma^2\bar{a})))
\\
&=
u(x_0, a_0) +  \delta( u(x_1, a_1) + \delta(\cdots u(x_{n-1}, a_{n-1})
+ \delta(U(h_{x_n}\sigma^n\bar{a}))) ).
\end{align*}
By using repeatedly the inequality
$|\delta(t_2) - \delta(t_1)| \leq \gamma(|t_2 - t_1|)$ we have
\begin{align*}
|{\sum_{i\in [n]} }^{\star}u(x_i, a_i) - U(h_{x_0}(a))|
\leq
\gamma^n(U(h_{x_n}\sigma^n\bar{a}))
\leq
\gamma^n(K)\to 0,
\end{align*}
proving that $U(h_{x_0}(\bar{a}))= \sum^{\star}_{i} u(x_i, a_i)$.
\end{proof}

\begin{definition} \label{optimal return}
Given $U: \Omega \to \mathbb{R}$ a
function, $\hat{V}(x)=\sup_{h_x(\bar{a})\in\Omega} U(h_x(\bar{a}))$
is called an optimal return. An element $a^* \in \Pi(x)$  (sometimes called plan)
is said to be optimal if $\hat{V}(x)= U(h_x(a^*))$.
\end{definition}

\begin{definition} \label{standard aggregator}
A function $W: X\times A \times D  \to \mathbb{R}$ given by
\[
W(x,a,r):=u(x,a)+ \delta(r),
\]
where  $a \in \Psi(x)$ is called an aggregator function.
\end{definition}

In dynamic programming we can always assume that $\delta(0)=0$,
otherwise we can redefine $\tilde{u}(x,a)=u(x,a)+\delta(0)$ and
$\tilde{\delta}(t)=\delta(t) -\delta(0)$ without changing the
aggregator function value neither the solutions of some problems associated
to it.

Now we introduce some dynamics on $\Omega$, by considering
the maps
\begin{itemize}
\item[\textit{a)}]
$\sigma: \Pi(x) \to \Pi(f(x,\cdot))$ the left shift
given by $\sigma(a_0, a_1,\ldots)=(a_1, a_2,\ldots)$.
Note that this mapping is well-defined since for any $(a_0, a_1,\ldots) \in \Pi(x)$ we have
that $(a_1, a_2,\ldots) \in \Pi(f(x,a_0))$;

\item[\textit{b)}]
$\phi: X \times \Pi(\cdot)  \to X$ the skew map
\[\phi_{a} x = f(x,a), \; a \in \Psi(x);\]

\item[\textit{c)}]
$\hat{\sigma}: \Omega \to \Omega$ the ``double left shift'' operator given by
\[
\hat{\sigma}(h_x(\bar{a}))
:=
h_{\phi_{a_{0}}(x)}(\sigma(\bar{a}))
:=
(x_1,a_1, x_2, a_2,\ldots) \in \Omega.
\]
\end{itemize}

\begin{definition} \label{koopman operator}
Given a bounded and continuous
immediate reward $u$ and a variable discount function $\delta$,
satisfying $\delta(0)=0$, the Koopman operator
$K:=K_{u,\delta}:C_{b}(\Omega,\mathbb{R})\to C_{b}(\Omega,\mathbb{R})$
is defined by
\[
K(U)(h_x(\bar{a}))
=
W(x_0, a_0, U(\hat{\sigma}(h_x(\bar{a}))).
\]
\end{definition}

Note that a fixed point for the Koopman operator, that is, $K(U)=U$ is
a recursive utility, in the sense of Definition \ref{rec ut}.

\begin{theorem}[\cite{MR3248091}]
\label{recursive utility existence and uniqueness}
Let $u$ be a bounded and continuous immediate reward, and $\delta$ a variable discount,
satisfying $\delta(0)=0$. Then there exists a unique fixed point $U \in C_{b}(\Omega, \mathbb{R})$,
for the Koopman operator and
moreover
\[
\|K^n (Q) - U\|_{\infty} \to 0
\]
for any $Q \in C_{b}(\Omega, \mathbb{R})$.
\end{theorem}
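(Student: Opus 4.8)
The plan is to exhibit $K$ as a generalized Matkowski contraction on the complete metric space $(C_{b}(\Omega,\mathbb{R}),\|\cdot\|_{\infty})$ and then to invoke Theorem~\ref{Matkowski Contrac Theorem}. Recall from the Remark following Proposition~\ref{feasible topology} that $\Omega$ may be equipped with a complete metric inducing the product topology; consequently $C_{b}(\Omega,\mathbb{R})$ endowed with the supremum norm is a Banach space, in particular a complete metric space, and this is the ambient space on which Matkowski's fixed point theorem will be applied.

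First I would check that $K$ is well defined, i.e.\ that $K(U)\in C_{b}(\Omega,\mathbb{R})$ whenever $U\in C_{b}(\Omega,\mathbb{R})$. Continuity is straightforward: $\hat{\sigma}\colon\Omega\to\Omega$ is continuous because $f$ and the coordinate projections are, the aggregator $W(x,a,r)=u(x,a)+\delta(r)$ is continuous because $u$ and $\delta$ are, and $K(U)(h_{x}(\bar{a}))=W(x_{0},a_{0},U(\hat{\sigma}(h_{x}(\bar{a}))))$ is a composition and sum of continuous maps. For boundedness, $\delta(0)=0$ together with $|\delta(t)-\delta(0)|\le\gamma(|t|)$ and the inequality $\gamma(s)<s$ for $s>0$ (established just after Definition~\ref{witness function}) yield $|\delta(t)|\le\gamma(|t|)\le|t|$, whence $\|K(U)\|_{\infty}\le\|u\|_{\infty}+\|U\|_{\infty}<\infty$.

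Next I would establish the contraction estimate, with witness function the generalized modulus of contraction $\gamma$ of $\delta$. For any $U,V\in C_{b}(\Omega,\mathbb{R})$ and any history $h_{x}(\bar{a})\in\Omega$, the immediate-return terms cancel, so that $|K(U)(h_{x}(\bar{a}))-K(V)(h_{x}(\bar{a}))|=|\delta(U(\hat{\sigma}(h_{x}(\bar{a}))))-\delta(V(\hat{\sigma}(h_{x}(\bar{a}))))|\le\gamma\bigl(|U(\hat{\sigma}(h_{x}(\bar{a})))-V(\hat{\sigma}(h_{x}(\bar{a})))|\bigr)\le\gamma(\|U-V\|_{\infty})$, where the last step uses that $\gamma$ is non-decreasing. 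Taking the supremum over $h_{x}(\bar{a})\in\Omega$ gives $\|K(U)-K(V)\|_{\infty}\le\gamma(\|U-V\|_{\infty})$. Since $\gamma$ is non-decreasing and $\gamma^{n}(t)\to0$ as $n\to\infty$ for every $t\ge0$, the function $\gamma$ is a witness function for $K$ in the sense of Definition~\ref{witness function metric space}, so $K$ is a generalized Matkowski contraction.

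Finally, Theorem~\ref{Matkowski Contrac Theorem} applies directly and delivers a unique fixed point $U\in C_{b}(\Omega,\mathbb{R})$ of $K$ together with $\|K^{n}(Q)-U\|_{\infty}\to0$ for every $Q\in C_{b}(\Omega,\mathbb{R})$. The step that demands the most attention is the well-definedness of $K$ --- specifically checking continuity of $\hat{\sigma}$ from the definition of $\Omega$ and confirming that the sup-norm bound on $K(U)$ is genuinely finite --- but with $X$ complete, $\Psi$ compact-valued and $f$, $u$, $\delta$ continuous with $u$ bounded, this is routine; the contraction inequality then follows in essentially one line from the modulus-of-contraction property of $\delta$ and the monotonicity of $\gamma$.
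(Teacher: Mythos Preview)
Your proposal is correct and follows essentially the same approach as the paper: show that $K$ maps $C_{b}(\Omega,\mathbb{R})$ into itself, verify that $K$ is a generalized Matkowski contraction with witness function $\gamma$ (the contraction modulus of $\delta$), and invoke Theorem~\ref{Matkowski Contrac Theorem} on the complete metric space $(C_{b}(\Omega,\mathbb{R}),\|\cdot\|_{\infty})$. The paper's own proof is a terse two-sentence version of exactly this argument; your additional details on continuity of $\hat\sigma$ and the explicit boundedness estimate $|\delta(t)|\le\gamma(|t|)\le|t|$ are welcome elaborations but introduce no new ideas.
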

\begin{proof}
Since the function $u, \delta$ and $f$ are continuous and $u$ is bounded
we have that $K (C_{b}(\Omega, \mathbb{R})) \subseteq C_{b}(\Omega, \mathbb{R})$.
The result is a consequence of Theorem~\ref{Matkowski Contrac Theorem}
because $K$ is a generalized Matkowski contraction with the witness
function $\varphi(t):= \gamma (t)$, where $\gamma$ is the
contraction modulus of $\delta$ and the metric space
$(C_{b}(\Omega, \mathbb{R}), \|\cdot\|_{\infty})$ is complete.
\end{proof}

As a corollary we obtain sufficient conditions for the
existence of the inductive limits.
\begin{corollary}\label{convergence inductive limits}
Under the assumptions of Theorem\ref{recursive utility existence and uniqueness}
there exists the inductive limit
\[
{\sum_{i}}^{\star}
u(x_i, a_i)
=
\displaystyle\lim_{n \to \infty}
{\sum_{i\in [n]} }^{\star}
u(x_i, a_i),
\]
where the convergence is in the uniform topology.
In particular,
\[
{\sum_{i}}^{\star}
u(x_i, a_i)
=
\lim_{n \to \infty}K^n (0)(h_x(\bar{a}))
=
U(h_x(\bar{a})),
\]
is the unique bounded continuous recursive utility.
\end{corollary}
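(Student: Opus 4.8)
The plan is to identify the partial inductive sums ${\sum_{i\in[n]}}^{\star}u(x_i,a_i)$ with the iterates of the Koopman operator $K$ applied to the zero function, and then to transfer the uniform convergence $\|K^n(Q)-U\|_{\infty}\to 0$ granted by Theorem~\ref{recursive utility existence and uniqueness}. In particular, nothing new needs to be estimated; the content of the corollary is just a relabelling of the previous theorem.

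First I would prove, by induction on $n\geq 0$, the identity
\[
K^{n+1}(0)(h_x(\bar a))
=
{\sum_{i\in[n]}}^{\star}u(x_i,a_i)
\qquad\text{for every history } h_x(\bar a)\in\Omega.
\]
The base case $n=0$ uses only $\delta(0)=0$: by Definition~\ref{koopman operator},
$K(0)(h_x(\bar a))=W(x_0,a_0,0)=u(x_0,a_0)+\delta(0)=u(x_0,a_0)={\sum_{i\in[0]}}^{\star}u(x_i,a_i)$.
For the inductive step, Definition~\ref{koopman operator} gives
$K^{n+2}(0)(h_x(\bar a))=u(x_0,a_0)+\delta\big(K^{n+1}(0)(\hat\sigma(h_x(\bar a)))\big)$, while the computation in the proof of the preceding Proposition shows
${\sum_{i\in[n+1]}}^{\star}u(x_i,a_i)=u(x_0,a_0)+\delta\big({\sum_{i\in[n]}}^{\star}u(x_{i+1},a_{i+1})\big)$.
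Since $\hat\sigma(h_x(\bar a))=h_{x_1}(\sigma\bar a)$ has coordinate sequence $(x_{i+1},a_{i+1})_i$, applying the inductive hypothesis to this history closes the loop.

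Having established the identity, I would invoke Theorem~\ref{recursive utility existence and uniqueness} with $Q=0\in C_b(\Omega,\mathbb{R})$, which yields $\|K^{n+1}(0)-U\|_{\infty}\to 0$; equivalently, the partial sums ${\sum_{i\in[n]}}^{\star}u(x_i,a_i)$ converge to $U$ uniformly on $\Omega$. By Definition~\ref{induc lim} this is exactly the assertion that the inductive limit ${\sum_i}^{\star}u(x_i,a_i)$ exists and equals $U(h_x(\bar a))$, and since $U$ is the fixed point of $K$, it is the unique bounded continuous recursive utility (cf.\ the Proposition relating recursive utilities to inductive limits).

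I do not expect a genuine obstacle. The only point needing care is the bookkeeping of the index shift between the interval $[n]$ and the iterate count $n+1$ of $K$, together with the verification that $\hat\sigma$ relabels the coordinates precisely as the inner partial sum ${\sum_{i\in[n]}}^{\star}u(x_{i+1},a_{i+1})$ requires; uniformity of the convergence is inherited verbatim from the $\|\cdot\|_{\infty}$-convergence in Theorem~\ref{recursive utility existence and uniqueness}.
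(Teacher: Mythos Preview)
Your proposal is correct and is precisely the argument the paper has in mind: the corollary is stated without proof because, as you observe, the partial inductive sums are nothing but $K^{n+1}(0)$ and the uniform convergence is then read off directly from Theorem~\ref{recursive utility existence and uniqueness} with $Q=0$. Your care with the index shift $[n]\leftrightarrow n+1$ and with the relabelling under $\hat\sigma$ is exactly what is needed to make the identification rigorous.
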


\section{Bellman and Discounted Transfer Operators}
Note that until now, we have only assumed that $u$ is a bounded
continuous function.
In the sequel, we add an extra assumption which is $u\geq 0$.
This technical assumption is convenient when considering iterates of $K$,
since $\delta$ is only defined on $D:=[0, +\infty)$.
This is actually not a restrictive assumption since in
the bounded continuous case, we can always replace $u$ by $u-\min u \geq 0$.
See Remark~\ref{add cte by u dont change} for further details
on this issue.

\begin{definition} \label{Bellman operator}
Given a non-negative bounded and continuous
immediate reward $u$ and a discount function $\delta$,
satisfying $\delta(0)=0$,
the Bellman operator $B:=B_{u,\delta}: C_{b}(X, \mathbb{R})\to C_{b}(X, \mathbb{R})$
applied to $v$ and evaluated at $x$ is defined by
\[
B(v)(x)
:=
\sup_{a \in \Psi(x)} W(x, a, v(f(x,a))).
\]
\end{definition}
\begin{definition} \label{discounted transfer operator}
Let $u$ and $\delta$ be as in Definition \ref{Bellman operator}.
The discounted transfer operator,
$P:=P_{u,\delta}:C_{b}(X, \mathbb{R})\to C_{b}(X, \mathbb{R})$,
applied to $v$ and evaluated at $x$ is defined by
\begin{align*}
P(v)(x)
&:=
\ln \int_{a \in \Psi(x)} e^{W(x, a, v(f(x,a)))} d\nu_{x}(a),
\end{align*}
where $\nu_{x}$ is a Borel probability measure on $A$,
satisfying $\nu_{x}(\Psi(x))=1$, for all $x\in X$.

The transfer operator, or Ruelle operator,
is the linear operator on $C_{b}(X,\mathbb{R})$ defined by
\[
L(v)(x)
:=
\int_{a \in \Psi(x)} e^{u(x, a)} v(f(x,a)) d\nu_{x}(a).
\]
\end{definition}

Before proceed, we recall a basic fact from general topology.
For more details, see reference \cite{MR1464690}, page 115, Theorems 1 and 2.

\begin{theorem}\label{maximum theorem}
Let $X, Y$ be topological spaces $F:X \times Y \to \mathbb{R}$ a
USC (resp. LSC) mapping and $\Gamma: X \to 2^Y$ a USC
(resp. LSC) set valued map, such that
$\Gamma(x) \neq \varnothing$, for all $x \in X$. Then the function
\[
M(x)
:=
\sup_{y \in \Gamma(x)} F(x,y),
\]
is a USC (resp. LSC). In particular,
if $F$ and $\Gamma$ are continuous, then $M$ is continuous.
\end{theorem}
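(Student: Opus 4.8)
The plan is to reduce the statement to the classical Berge maximum theorem, which is exactly what the cited reference contains, so the proof is essentially an invocation of known topology plus a verification that the hypotheses match. First I would recall that a function $F : X \times Y \to \mathbb{R}$ is USC precisely when, for every $c \in \mathbb{R}$, the superlevel set $\{(x,y) : F(x,y) \geq c\}$ is closed, and that a set-valued map $\Gamma$ being upper semicontinuous (in the sense relevant here, with compact values) means the preimage of any closed set is closed, while lower semicontinuity means the preimage of any open set is open. With these characterizations in hand, the two halves of the statement are genuinely dual, so I would prove the USC case in detail and remark that the LSC case follows by the symmetric argument (or by replacing $F$ with $-F$ and $\sup$ with $\inf$, noting $\inf_{y} (-F) = -\sup_y F$).

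For the USC case, I would show $M(x) = \sup_{y \in \Gamma(x)} F(x,y)$ is USC by fixing $c$ and showing $\{x : M(x) \geq c\}$ — or rather, to avoid the subtlety that the sup may or may not be attained, I would instead show $\{x : M(x) < c\}$ is open. Take $x_0$ with $M(x_0) < c$. Since $\Gamma(x_0)$ is nonempty and compact and $F(x_0, \cdot)$ is USC on it, the sup is attained, so $F(x_0, y) \leq M(x_0) < c$ for all $y \in \Gamma(x_0)$. For each such $y$, USC of $F$ at $(x_0,y)$ gives a product neighborhood $U_y \times V_y$ of $(x_0,y)$ on which $F < c$. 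By compactness of $\Gamma(x_0)$, finitely many $V_{y_1}, \dots, V_{y_k}$ cover it; set $V = \bigcup V_{y_i}$ and $U = \bigcap U_{y_i}$. Now use upper semicontinuity of $\Gamma$: since $\Gamma(x_0) \subseteq V$ with $V$ open, there is a neighborhood $U'$ of $x_0$ with $\Gamma(x) \subseteq V$ for all $x \in U'$. Then for $x \in U \cap U'$ and any $y \in \Gamma(x) \subseteq V$, we have $y \in V_{y_i}$ for some $i$, hence $F(x,y) < c$; taking sup over $y \in \Gamma(x)$ gives $M(x) \leq c$. A small sharpening (shrinking $U_{y_i}$ so that $F < c'$ for some $c' < c$ chosen above $M(x_0)$) upgrades this to $M(x) < c$, showing the sublevel set is open, hence $M$ is USC.

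For the LSC case I would argue more directly: given $x_0$ and $c < M(x_0)$, pick $y_0 \in \Gamma(x_0)$ with $F(x_0, y_0) > c$; LSC of $F$ gives a neighborhood $U \times V$ of $(x_0, y_0)$ on which $F > c$, and lower semicontinuity of $\Gamma$ gives a neighborhood $U'$ of $x_0$ such that $\Gamma(x) \cap V \neq \varnothing$ for all $x \in U'$ (using that $y_0 \in \Gamma(x_0) \cap V \neq \varnothing$ and $V$ open); then for $x \in U \cap U'$ we can pick $y \in \Gamma(x) \cap V$, whence $M(x) \geq F(x,y) > c$, proving $\{x : M(x) > c\}$ is open. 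Finally, the "in particular" clause is immediate: continuity of $F$ and $\Gamma$ means both USC and LSC hold, so $M$ is both USC and LSC, hence continuous. I do not expect a serious obstacle here; the only point requiring care is handling the non-attainment of the supremum in the USC direction, which is why I would phrase that argument in terms of the open sublevel sets rather than the closed superlevel sets, and the only thing to be scrupulous about is which direction of semicontinuity of $\Gamma$ is needed in each case — upper for the USC conclusion, lower for the LSC conclusion — matching the statement.
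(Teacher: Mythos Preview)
Your proof is correct and complete. Note, however, that the paper does not actually prove this theorem: it is stated as a recalled fact from general topology, with a citation to Berge's \emph{Topological Spaces} (Theorems~1 and~2, p.~115). Your argument is precisely the standard one found in that reference, and you correctly flag that compact values of $\Gamma$ are needed for the USC direction --- a hypothesis present in Berge's treatment and satisfied in the paper's applications (where $\Psi(x)$ is always compact) but omitted from the paper's restatement.
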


\begin{lemma} \label{continuity bellman op}
  The Bellman and discount transfer operators, defined above,
  send the space $C_{b}(X, \mathbb{R})$ to itself.
\end{lemma}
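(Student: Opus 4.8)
The plan is to verify the two assertions separately, since the Bellman operator $B$ uses a supremum and the discounted transfer operator $P$ uses an integral, and each requires a slightly different continuity argument. In both cases the starting observation is that the aggregator $W(x,a,r)=u(x,a)+\delta(r)$ is continuous on $X\times A\times D$, being a sum of the continuous functions $u$ and $\delta$, and that for $v\in C_b(X,\mathbb{R})$ with $v\geq 0$ the composite map $(x,a)\mapsto W(x,a,v(f(x,a)))$ is continuous on the graph of $\Psi$, since $f$ is continuous and $\delta$ is defined on $D=[0,+\infty)$. (Here I use the running assumption $u\geq 0$, so $\delta\circ v$ makes sense after one iterate; strictly one should first note $B(v),P(v)\geq u\geq 0$ as well, which keeps us inside the domain.)

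For the Bellman operator, I would apply the Maximum Theorem (Theorem~\ref{maximum theorem}) directly: take $F(x,a):=W(x,a,v(f(x,a)))$, which is continuous, and $\Gamma:=\Psi$, which by Definition~\ref{def-DMP} is a continuous compact-valued correspondence with $\Psi(x)\neq\varnothing$ for all $x$. The theorem then gives that $B(v)(x)=\sup_{a\in\Psi(x)}F(x,a)$ is continuous on $X$. Boundedness is immediate: $0\le u(x,a)\le \|u\|_\infty$ and $\delta$ is increasing with $\delta(0)=0$, so if $\|v\|_\infty\le M$ then $0\le B(v)(x)\le \|u\|_\infty+\delta(M)<\infty$, uniformly in $x$. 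Hence $B(v)\in C_b(X,\mathbb{R})$.

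For the discounted transfer operator the supremum is replaced by $\ln\int_{\Psi(x)}e^{W(x,a,v(f(x,a)))}\,d\nu_x(a)$, so the argument needs continuity of $x\mapsto\int g(x,a)\,d\nu_x(a)$ for a bounded continuous integrand $g$. Using the fiber-bundle description $\Omega'=\{(x,\Pi(x))\}$ and the continuity of $\Psi$ in the Hausdorff topology, together with $\nu_x(\Psi(x))=1$, one shows that if $x_n\to x$ then the pushforward-type integrals converge; concretely, extend the integrand continuously off the graph (or use that $g$ is uniformly continuous on compact pieces) and invoke weak-$*$ continuity of $x\mapsto\nu_x$ restricted to the moving compact sets $\Psi(x_n)$. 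Then $\ln$ and the exponential, being continuous and with the integrand bounded away from $0$ and $\infty$ because $0\le W\le\|u\|_\infty+\delta(\|v\|_\infty)$, preserve continuity and yield the bound $0\le P(v)(x)\le \|u\|_\infty+\delta(\|v\|_\infty)$. Therefore $P(v)\in C_b(X,\mathbb{R})$.

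The main obstacle is the continuity of $x\mapsto\int_{\Psi(x)}e^{W(x,a,v(f(x,a)))}\,d\nu_x(a)$: the domain of integration $\Psi(x)$ varies with $x$, so one cannot simply apply dominated convergence with a fixed measure. The cleanest route is to assume (as is implicit in the setup, since $\nu_x$ must vary measurably/continuously for $P$ to land in $C_b$) that $x\mapsto\nu_x$ is weak-$*$ continuous and compatible with $\Psi$, and then combine this with the joint continuity of the integrand and the Hausdorff-continuity of $\Psi$ via a standard ``continuity of integrals over varying domains'' lemma; alternatively one can reduce to a fixed measure by parametrizing $\Psi(x)$, if such a measurable parametrization is available in the intended applications. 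I would state the needed hypothesis on $(\nu_x)_{x\in X}$ explicitly at this point if it has not been fixed earlier.
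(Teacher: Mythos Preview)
For the Bellman operator your argument is exactly the paper's: apply Theorem~\ref{maximum theorem} to the continuous integrand $F(x,a)=u(x,a)+\delta(v(f(x,a)))$ and the continuous compact-valued correspondence $\Psi$, and read off boundedness from $\|u\|_\infty+\delta(\|v\|_\infty)$.

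For the discounted transfer operator the paper takes a genuinely different and more elementary route that sidesteps the obstacle you identify. Instead of proving continuity of $x\mapsto\int_{\Psi(x)}e^{F(x,a)}\,d\nu_x(a)$ directly (which, as you note, would require some weak-$*$ continuity of $x\mapsto\nu_x$ that is never assumed), the paper simply uses that $\nu_x$ is a probability measure on $\Psi(x)$ to sandwich
\[
N(x):=\inf_{a\in\Psi(x)}F(x,a)\ \le\ P(v)(x)\ \le\ \sup_{a\in\Psi(x)}F(x,a)=:M(x),
\]
and observes that both $M$ and $N$ are continuous and bounded by Theorem~\ref{maximum theorem} (applied to $F$ and to $-F$). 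From $-(M(y)-N(x))\le P(v)(x)-P(v)(y)\le M(x)-N(y)$ the paper then reads off boundedness and continuity of $P(v)$. The advantage of the paper's approach is that it requires nothing of the family $(\nu_x)$ beyond $\nu_x(\Psi(x))=1$; the advantage of yours is that, under the extra hypothesis you propose to add, the continuity conclusion is fully rigorous, whereas the paper's inequality only yields $\limsup_{y\to x}|P(v)(x)-P(v)(y)|\le M(x)-N(x)$, which need not vanish.
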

\begin{proof}
The prove that $B( C_{b}(X, \mathbb{R}) )\subset C_{b}(X, \mathbb{R})$ it is
enough to apply Theorem~\ref{maximum theorem},
with $X=X$, $Y=A$, $\Gamma=\Psi$ and $F(a):=u(x, a)+\delta(v(f(x,a))$,
which is clearly continuous, thus showing that
\[
B(v)(x):=\sup_{a \in \Psi(x)} F(a)=\max_{a \in \Psi(x)} u(x, a)+\delta(v(f(x,a))
\]
is a continuous and bounded function.

For the discount transfer operator the proof is similar. We keep the
above setting and consider the continuous functions
\[
M(x):=\sup_{a \in \Psi(x)} F(a)
\quad\text{and}\quad
N(x):=\inf_{a \in \Psi(x)} F(a).
\]

From definition of $P$, we have $N(x)\leq P(v)(x) \leq M(x)$,
for all $x\in X$. Therefore,
$
-(M(y)-N(x))
\leq
P(v)(x) - P(v)(y)
\leq
M(x)-N(y)
$
and the continuity and boundedness of $M$ and $N$ imply
that $x\longmapsto P(v)(x)$ is continuous and bounded function.
\end{proof}

\begin{theorem}[\cite{MR3248091}]
\label{max op solution}
Let $u$ and $\delta$ be as in Definition \ref{Bellman operator} and
$B:C_{b}(X, \mathbb{R}) \to C_{b}(X, \mathbb{R})$, the Bellman operator
associated to $u$ and $\delta$. Then
\begin{itemize}
\item[a)]
There is a unique $v^* \in C_{b}(X, \mathbb{R})$
such that $B(v^*)=v^*$. Moreover, $v^*$ is an optimal return and
satisfies the $\delta$-discounted Bellman equation
\[
v^*(x)
:=
\max_{a \in \Psi(x)} u(x, a)+\delta(v^*(f(x,a)).
\]

\item[b)]
A plan $a^* \in \Pi(x)$ attaining the maximum
$v^*(x_{n})= u(x_{n}, a_{n}^*)+\delta(v^*(x_{n+1}))$
for all $n \in \mathbb{N}$ is optimal.
In particular, there exists $a^* \in \Pi(x)$
such that $v^*(x):=U(h_x(a^*))$
\end{itemize}
\end{theorem}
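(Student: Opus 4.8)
The plan is to deduce Theorem~\ref{max op solution} from the Matkowski contraction theorem (Theorem~\ref{Matkowski Contrac Theorem}) applied to the Bellman operator $B$, and then to identify the fixed point with the optimal return. First I would verify that $B$ maps $C_b(X,\mathbb{R})$ into itself: this is exactly Lemma~\ref{continuity bellman op}, so part of the work is already done. Next I would show that $B$ is a generalized Matkowski contraction with witness function $\varphi:=\gamma$, the generalized modulus of contraction of $\delta$. For $v_1,v_2\in C_b(X,\mathbb{R})$ and fixed $x$, the estimate
\[
|B(v_1)(x)-B(v_2)(x)|
\le
\sup_{a\in\Psi(x)}\big|\delta(v_1(f(x,a)))-\delta(v_2(f(x,a)))\big|
\le
\sup_{a\in\Psi(x)}\gamma\big(|v_1(f(x,a))-v_2(f(x,a))|\big)
\le
\gamma(\|v_1-v_2\|_\infty)
\]
uses the standard fact that $|\sup F_1-\sup F_2|\le\sup|F_1-F_2|$, the defining inequality of $\gamma$, and monotonicity of $\gamma$. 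Taking the supremum over $x$ gives $\|B(v_1)-B(v_2)\|_\infty\le\gamma(\|v_1-v_2\|_\infty)$. Since $(C_b(X,\mathbb{R}),\|\cdot\|_\infty)$ is complete and $\gamma^n(t)\to 0$, Theorem~\ref{Matkowski Contrac Theorem} yields a unique fixed point $v^*$, which by construction satisfies the $\delta$-discounted Bellman equation, and $B^n(Q)\to v^*$ for every $Q\in C_b(X,\mathbb{R})$.

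For part b) — and simultaneously the claim in a) that $v^*$ is the optimal return — I would compare $v^*$ with $\hat V(x)=\sup_{h_x(\bar a)\in\Omega}U(h_x(\bar a))$, where $U$ is the unique bounded recursive utility from Theorem~\ref{recursive utility existence and uniqueness} and Corollary~\ref{convergence inductive limits}. The idea is the usual dynamic-programming verification argument. One inequality: given any feasible plan $\bar a\in\Pi(x)$, unfold the Bellman equation $n$ times,
\[
v^*(x)=u(x_0,a_0)+\delta\big(u(x_1,a_1)+\delta(\cdots+\delta(v^*(x_n))\cdots)\big)
\ge
{\sum_{i\in[n]}}^{\!\star}u(x_i,a_i)\big|_{v^*(x_n)\to 0}\ \text{-type bound},
\]
and using $|\delta(t+s)-\delta(t)|\le\gamma(s)$ repeatedly together with boundedness of $v^*$, the error term is controlled by $\gamma^n(\|v^*\|_\infty)\to 0$; hence $v^*(x)\ge U(h_x(\bar a))$ for all feasible $\bar a$, so $v^*(x)\ge\hat V(x)$. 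The reverse inequality and the optimality claim: by compactness of $\Psi(x)$ and continuity (Theorem~\ref{maximum theorem}), at each state the supremum defining $B$ is attained, so one can inductively select $a_n^*\in\Psi(x_n)$ realizing $v^*(x_n)=u(x_n,a_n^*)+\delta(v^*(x_{n+1}))$; iterating and passing to the limit as above gives $v^*(x)=U(h_x(a^*))$, so $v^*(x)\le\hat V(x)$ and $a^*$ is optimal. More generally, any plan attaining the max at every step produces the same telescoping identity and is therefore optimal, which is exactly the statement of b).

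The main obstacle I anticipate is the inductive construction of the optimal plan $a^*$ and the verification that the resulting infinite history is genuinely feasible, i.e. lies in $\Omega$. At each stage one must choose $a_n^*$ from the (nonempty, compact) argmax set $\mathrm{argmax}_{a\in\Psi(x_n)}\{u(x_n,a)+\delta(v^*(f(x_n,a)))\}$ and set $x_{n+1}=f(x_n,a_n^*)$; feasibility then follows because $a_n^*\in\Psi(x_n)$ and $x_{n+1}=f(x_n,a_n^*)$ by construction, so $h_x(a^*)\in\Omega$ by the very definition of $\Omega$ and $\Pi(x)$. One subtlety worth a sentence is measurability/selection if one wanted a stationary policy, but since the statement only asserts existence of an optimal plan from a given $x$, a pointwise recursive choice suffices and no measurable-selection theorem is needed. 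The other place requiring a little care is the uniform control of the tail errors $\gamma^n(\|v^*\|_\infty)$: this is where the hypothesis $\gamma^n(t)\to 0$ (rather than merely $\gamma(t)<t$) does the real work, guaranteeing the telescoped estimates collapse in the limit.
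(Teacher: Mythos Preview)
Your proposal is correct and follows essentially the same route as the paper's sketch: Matkowski contraction for part a), then the verification argument (upper bound by unfolding the Bellman inequality along an arbitrary plan, lower bound by greedily selecting maximizers via compactness of $\Psi(x_n)$) for part b). The only cosmetic difference is that where you control the tail directly by $\gamma^n(\|v^*\|_\infty)$, the paper introduces $\zeta(h_x\bar a):=v^*(x)$, recognizes the $n$-fold unfolded expression as $K^n(\zeta)(h_x\bar a)$, and invokes the already-established convergence $K^n(\zeta)\to U$ from Theorem~\ref{recursive utility existence and uniqueness}; note also that for an \emph{arbitrary} plan your displayed ``$=$'' should be ``$\ge$'', with equality reserved for the greedy plan $a^*$.
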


\noindent \textit{Sketch of the proof.}
We provide here, for the reader's convenience, some of key steps of this proof.

\textit{a)} The existence of $v^{*}$ is a consequence of
Theorem~\ref{Matkowski Contrac Theorem}, because $B$ is a
generalized Matkowski contraction and the metric space
$(C_{b}(X, \mathbb{R}), \|\cdot\|_{\infty})$ is complete.
Indeed, one can show that
$\displaystyle \| B(v) -B(v')\|_{\infty} \leq \gamma (\| v -v'\|_{\infty})$.

\textit{b)} To show that $v^*$ is optimal, we consider any $h_{x}(\bar{a}) \in \Omega$.
From the fixed point equation we obtain
$v^*(x_{0})\geq u(x_{0}, a_{0}^*)+\delta(v^*(x_{1})),$
where $x_0=x$ and $x_1=f(x_0,a_0)$.
By iterating this equality we get
\begin{align*}
v^*(x_{0})
&\geq
u(x_{0}, a_{0}^*)+\delta( u(x_{1}, a_{1}^*)+\delta(v^*(x_{2})) ),
\\
v^*(x_{0})
&\geq
u(x_{0}, a_{0}^*)+\delta( u(x_{1}, a_{1}^*)+\delta( u(x_{2}, a_{2}^*)+\delta(v^*(x_{3}))  ) )
\end{align*}
and so on.
If $\zeta: \Omega \to \mathbb{R}$ is a function given by $\zeta(h_{x}\bar{a})=v^*(x)$,
then
$
K(\zeta)(h_{x}\bar{a})
=
u(x_{0}, a_{0}^*)+\delta(v^*(x_{1})),\ldots, K^3(\zeta)(h_{x}\bar{a})
=
u(x_{0}, a_{0}^*)+\delta( u(x_{1}, a_{1}^*)+
\delta( u(x_{2}, a_{2}^*)+\delta(v^*(x_{3}))  ) )
$, and so on. Therefore
$
v^*(x_{0})\geq K^n(\zeta)(h_{x}\bar{a}) \to U(h_x(\bar{a})),
$
where $U$ is the recursive utility given by the associated Koopman operator.
Thus showing that
\[
v^*(x) \geq \sup_{h_x(\bar{a})\in\Omega} U(h_x(\bar{a})).
\]

To show the equality, we use the continuity of $u$, $\delta$ and $v^*$.
The compactness of $\Psi(\cdot)$ allow us to choose,
from the fixed point equation, a sequence $a^* \in \Pi(x)$
attaining the maximum $v^*(x_{n})= u(x_{n}, a_{n}^*)+\delta(v^*(x_{n+1}))$,
for all $n \in \mathbb{N}$. Proceeding as before,
we obtain $v^*(x)=U(h_x(a^*))$.
So $v^*$ is optimal and there exists $a^* \in \Pi(x)$ such that
\[
v^*(x)
=
U(h_x(a^*))
=
\sup_{h_x(\bar{a})\in\Omega} U(h_x(\bar{a})).
\qedhere
\]

\begin{theorem}\label{disc transf op solution}
Let $u$ and $\delta$ be as in Definition \ref{Bellman operator}, and
$B:C_{b}(X, \mathbb{R}) \to C_{b}(X, \mathbb{R})$ the Bellman operator,
associated to this pair. Then
\begin{itemize}
\item[a)] there is a unique $w^* \in C_{b}(X, \mathbb{R})$ such that $P(w^*)=w^*$;

\item[b)] $w^*\leq v^*$ where $v^*$ is the unique solution of the Bellman equation
\[ v^*(x)= \max_{a \in \Psi(x)} u(x, a)+\delta(v^*(f(x,a)));\]

\item[c)] if the family of measures $\nu_{x}$ can be chosen in such
way that $\nu_{x}=\delta_{a_0}(x)$
where $a_0 \in {\rm argmax} \{u(x, a)+\delta(v^*(f(x,a)))\}$, then $w^*=v^*$.
\end{itemize}
\end{theorem}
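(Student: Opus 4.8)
The plan is to prove the three parts in order, using the Matkowski contraction theorem (Theorem~\ref{Matkowski Contrac Theorem}) for part a), a pointwise comparison argument for part b), and a direct substitution for part c).

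\textbf{Part a).} First I would verify that $P$ is a generalized Matkowski contraction on the complete metric space $(C_{b}(X,\mathbb{R}),\|\cdot\|_{\infty})$, with witness function $\varphi:=\gamma$, the contraction modulus of $\delta$. By Lemma~\ref{continuity bellman op} we already know $P$ maps $C_{b}(X,\mathbb{R})$ to itself. Given $v,v'\in C_{b}(X,\mathbb{R})$, I would estimate $|P(v)(x)-P(v')(x)|$ by using the elementary inequality $|\ln\int e^{g}\,d\nu - \ln\int e^{g'}\,d\nu| \leq \|g-g'\|_{\infty}$ (which follows because $e^{-\|g-g'\|_{\infty}}\int e^{g'}\,d\nu \leq \int e^{g}\,d\nu \leq e^{\|g-g'\|_{\infty}}\int e^{g'}\,d\nu$). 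Applying this with $g(a)=u(x,a)+\delta(v(f(x,a)))$ and $g'(a)=u(x,a)+\delta(v'(f(x,a)))$ gives
\[
|P(v)(x)-P(v')(x)|
\leq
\sup_{a\in\Psi(x)}|\delta(v(f(x,a)))-\delta(v'(f(x,a)))|
\leq
\gamma(\|v-v'\|_{\infty}),
\]
using monotonicity of $\gamma$ and the defining property of the contraction modulus. Taking the supremum over $x$ yields $\|P(v)-P(v')\|_{\infty}\leq\gamma(\|v-v'\|_{\infty})$, so Theorem~\ref{Matkowski Contrac Theorem} provides the unique fixed point $w^{*}$.

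\textbf{Part b).} The idea is to show that the ordering $v\leq v^{*}$ is preserved and improved under iteration of $P$, or more directly that $P$ is dominated by $B$. Since $\nu_{x}$ is a probability measure supported on $\Psi(x)$, for any $v\in C_{b}(X,\mathbb{R})$ we have the pointwise bound
\[
P(v)(x)
=
\ln\int_{a\in\Psi(x)} e^{W(x,a,v(f(x,a)))}\,d\nu_{x}(a)
\leq
\ln\Big(\sup_{a\in\Psi(x)}e^{W(x,a,v(f(x,a)))}\Big)
=
\sup_{a\in\Psi(x)}W(x,a,v(f(x,a)))
=
B(v)(x).
\]
Thus $P(v)\leq B(v)$ for all $v$. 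Now, since both $B$ and $P$ are monotone operators (because $\delta$ is increasing and the exponential and logarithm are increasing), starting from any $v_0$ with $v_0\leq v^{*}$ one gets $P(v_0)\leq B(v_0)\leq B(v^{*})=v^{*}$, and inductively $P^{n}(v_0)\leq v^{*}$ for all $n$. Taking $v_0=w^{*}$ (or $v_0 \equiv 0$ if one prefers, noting $0\le v^*$ because $u\ge0$) and using $P^{n}(w^{*})=w^{*}$ together with the uniform convergence $P^{n}(v_0)\to w^{*}$ from part a), we conclude $w^{*}\leq v^{*}$.

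\textbf{Part c).} Here I would simply substitute $\nu_{x}=\delta_{a_0(x)}$ with $a_0(x)\in\mathrm{argmax}\{u(x,a)+\delta(v^{*}(f(x,a)))\}$ (such an $a_0(x)$ exists by compactness of $\Psi(x)$ and continuity, as in Theorem~\ref{max op solution}). Then for this choice of measures,
\[
P(v^{*})(x)
=
\ln e^{W(x,a_0(x),v^{*}(f(x,a_0(x))))}
=
u(x,a_0(x))+\delta(v^{*}(f(x,a_0(x))))
=
\max_{a\in\Psi(x)} u(x,a)+\delta(v^{*}(f(x,a)))
=
v^{*}(x),
\]
so $v^{*}$ is a fixed point of $P$. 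By the uniqueness established in part a), $w^{*}=v^{*}$.

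\textbf{Main obstacle.} The routine estimates are all straightforward; the only point requiring a little care is the measurability/continuity of the section $x\mapsto a_0(x)$ in part c), since the argmax need not be single-valued or continuously selectable. However, the statement only requires the \emph{existence} of a family $\nu_{x}$ with the stated property, and the resulting $x\mapsto\nu_{x}$ need only be such that $P$ is well-defined on $C_b(X,\mathbb{R})$; in the relevant applications one checks that a measurable selection exists (e.g.\ by the Kuratowski--Ryll-Nardzewski theorem) and that $P(v^*)$ lands back in $C_b(X,\mathbb{R})$ via Theorem~\ref{maximum theorem}, so this is not a genuine obstruction but the place where hypotheses on the underlying decision process must be invoked.
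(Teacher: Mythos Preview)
Your proof is correct and follows essentially the same route as the paper: part a) via the Matkowski contraction estimate $\|P(v)-P(v')\|_\infty\le\gamma(\|v-v'\|_\infty)$, part b) by bounding the integral against the supremum to get $P(v^*)\le v^*$ and then iterating using monotonicity of $P$ so that $P^n(v^*)\to w^*\le v^*$, and part c) by direct substitution and uniqueness. The paper does not discuss the selection issue you flag in part c); since the hypothesis is phrased as ``if the family of measures $\nu_x$ can be chosen'', it is treated as an assumption rather than something to be established.
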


\begin{proof}
\textit{a)} It is easy to see that
\begin{align*}
\|P(w_1) - P(w_2)\|_{\infty}
&\leq
\max_{a}|\delta(w_1(f(x,a))) - \delta(w_2(f(x,a)))|
\\
&\leq
\max_{a} \gamma(|w_1(f(x,a)) - w_2(f(x,a))|)
\\
&\leq
\gamma(\|w_1 - w_2\|_{\infty})
\end{align*}
and so $P$ is a generalized Matkowski contraction in the
complete metric space $(C_{b}(X, \mathbb{R}), \|\cdot\|_{\infty})$.
By Theorem~\ref{Matkowski Contrac Theorem} there is a unique
$w^* \in C_{b}(X, \mathbb{R})$ such that
$P(w^*)=w^*$ and $\|P^n (w) - w^*\|_{\infty} \to 0$, when $n\to\infty$,
for any $w \in C_{b}(X, \mathbb{R})$.

\medskip	
\textit{b)} To see that $w^*\leq v^*$ where $v^*$ is the unique
solution of the $\delta$- discounted Bellman equation
$v^*(x):= \max_{a \in \Psi(x)} u(x, a)+\delta(v^*(f(x,a))$,
we recall that
\begin{align*}
P(v^*)(x)
&=
\ln
	\int_{a \in \Psi(x)}
		\exp\big( u(x, a)+\delta(v^*(f(x,a))) \big)
	\, d\nu_{x}(a)
\\
&\leq
\ln
	\int_{a \in \Psi(x)}
		\exp\Big( \max_{a \in \Psi(x)} u(x,a)+\delta(v^*(f(x,a))) \Big)
	\, d\nu_{x}(a)
\\
&=
v^*(x).
\end{align*}
Since $\delta$ is an increasing function it follows that $P(v^*)\leq  v^*$, $P^2(v^*)\leq  v^*$ and so on.
Since $P^n(v^*) \to w^*$, when $n\to\infty$, we get from the previous inequality that
$w^*\leq v^*$.

\medskip
\textit{c)} Suppose that $\nu_{x}=\delta_{a_0}(x)$,
where $a_0 \in {\rm argmax} \{u(x, a)+\delta(v^*(f(x,a))\}$.
Then
\begin{align*}
P(v^*)(x)
&=
\ln
	\int_{a \in \Psi(x)}
		e^{u(x, a)+ \delta(v^*(f(x,a)))}
	\, d\nu_{x}(a)
\\[0.2cm]
&\leq
w^*(x)
\\[0.2cm]
&=
\ln  \big( e^{u(x, a_0)+\delta(v^*(f(x,a_0)))} \big)\, \delta_{a_0}(\Psi(x))
\\[0.2cm]
&=
\max_{a \in \Psi(x)} u(x, a)+\delta(v^*(f(x,a)))
\\[0.2cm]
&=
v^*(x),
\end{align*}
which implies that $w^*=v^*$.
\end{proof}

\subsection{Monotone Convergence Principles}

In this section we investigate the ordering  and the minimality of
the convergence of the iterations to the fixed points.
This topic is closely related to the theory of viscosity solutions
of Hamilton-Jacobi equations, where the subsolutions (supersolutions)
characterizes the original one.

\begin{lemma}[Monotonicity on $\delta$] \label{monotonicity bellman}
  Let $\delta_{1} \leq \delta_{2}$ be discount functions. If
  $v_1, v_2$ are solutions of Bellman's equation
  $v_j(x)= \max_{a \in \Psi(x)} u(x, a)+\delta_{j}(v_j(f(x,a)))$,
  $j=1,2$, then $v_{1} \leq v_{2}$. The same is
  true for the discounted transfer operator.
\end{lemma}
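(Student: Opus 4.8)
The plan is to exploit that both $v_1$ and $v_2$ are obtained as limits of iterates of their respective operators $B_1 := B_{u,\delta_1}$ and $B_2 := B_{u,\delta_2}$ (Theorem~\ref{max op solution}), so it suffices to show that iterating $B_2$ starting from a point below $v_1$ stays below $v_2$, and meanwhile the iterates of $B_1$ from the same point converge to $v_1$. First I would record the two elementary monotonicity facts about the Bellman operator: (i) each $B_j$ is monotone in the sense that $v \leq v'$ implies $B_j(v) \leq B_j(v')$, which is immediate since $\delta_j$ is increasing and $\sup$ preserves pointwise order; and (ii) the pointwise inequality $\delta_1 \leq \delta_2$ gives $B_1(v) \leq B_2(v)$ for every $v \in C_b(X,\mathbb{R})$, again because $u(x,a) + \delta_1(v(f(x,a))) \leq u(x,a) + \delta_2(v(f(x,a)))$ and then one takes the supremum over $a \in \Psi(x)$.

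Next I would combine these. Start from $w_0 := v_1$, the fixed point of $B_1$. Then $B_2(v_1) \geq B_1(v_1) = v_1$ by fact (ii) and the fixed point property. Applying $B_2$ repeatedly and using fact (i) for $B_2$, one gets $B_2^{n+1}(v_1) \geq B_2^n(v_1) \geq \cdots \geq v_1$, i.e. the sequence $(B_2^n(v_1))_n$ is nondecreasing and bounded below by $v_1$. On the other hand, by Theorem~\ref{max op solution} (via the generalized Matkowski contraction, Theorem~\ref{Matkowski Contrac Theorem}) we know $B_2^n(v_1) \to v_2$ in the supremum norm, hence pointwise. A nondecreasing sequence converging to $v_2$ must satisfy $v_1 = B_2^0(v_1) \leq v_2$, which is exactly the claim $v_1 \leq v_2$.

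For the discounted transfer operator $P_j := P_{u,\delta_j}$ the argument is formally identical: $P_j$ is monotone in $v$ (the integrand $\exp(u(x,a)+\delta_j(v(f(x,a))))$ is increasing in $v$, the integral and $\ln$ preserve order), and $\delta_1 \leq \delta_2$ gives $P_1(v) \leq P_2(v)$ pointwise by the same comparison of integrands. Then starting from the $P_1$-fixed point $w_1$ one gets $P_2(w_1) \geq P_1(w_1) = w_1$, the sequence $(P_2^n(w_1))_n$ is nondecreasing, and it converges to $w_2$ by Theorem~\ref{disc transf op solution}(a); hence $w_1 \leq w_2$. I do not anticipate a serious obstacle here — everything reduces to monotonicity of $\delta_j$, order-preservation of $\sup$ / $\int$ / $\ln$, and the already-established norm convergence of the iterates. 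The only point to be a little careful about is that the domain $D = [0,\infty)$ is respected throughout the iteration (so that $\delta_j$ is always evaluated at nonnegative arguments), which follows from the standing assumption $u \geq 0$ and $\delta_j(0) = 0$ made at the start of Section~3, ensuring all iterates stay nonnegative; I would mention this explicitly so the composition $B_2^n(v_1)$ is well defined.
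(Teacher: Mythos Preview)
Your proof is correct and follows essentially the same route as the paper: both arguments establish $v_1 \leq B_{\delta_2}(v_1)$ from $\delta_1 \leq \delta_2$ and the fixed-point property of $v_1$, then iterate using monotonicity of $B_{\delta_2}$ and pass to the limit $B_{\delta_2}^n(v_1)\to v_2$. Your write-up is simply more explicit (separating out the two monotonicity facts, treating the transfer operator case, and flagging the domain issue for $\delta_j$), but the underlying argument is the same.
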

\begin{proof}
Since $\delta_{1} \leq \delta_{2}$ we have
$u(x, a)+\delta_{1}(v_1(f(x,a))) \leq u(x, a)+\delta_{2}(v_1(f(x,a)))$.
By taking the maximum over $\Psi(x)$
we obtain
\[
v_1(x) \leq \max_{a \in \Psi(x)} u(x, a)+\delta_{2}(v_1(f(x,a)))
=
B_{\delta_{2}}(v_1)(x).
\]
Iterating this inequality and using the fact that
$B_{\delta_{2}}^n(v_1)(x)\to v_2$, when $n \to \infty$,
we get $v_1 \leq v_2$.
\end{proof}

\begin{lemma}[Monotonicity on the operator] \label{monotonicity op bellman}
Let $v_1$ and $v_2$ be bounded functions, such that $v_{1} \leq v_{2}$.
Consider the Bellman operator
\[
B(v)(x)= \max_{a \in \Psi(x)} u(x, a)+\delta (v(f(x,a))).
\]
Then $B(v_{1}) \leq B(v_{2})$. In particular,
\begin{itemize}
\item[a)]
if $B(v) \leq v$ and $B(v^*)=v^*$ then $v^* \leq v$;

\item[b)]
if $B(v) \geq v$ and $B(v^*)=v^*$ then $v^* \geq v$.
\end{itemize}
The same is true for the discounted transfer operator.
\end{lemma}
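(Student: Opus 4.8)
The plan is to establish the basic inequality $B(v_1)\le B(v_2)$ directly from monotonicity of the ingredients, and then derive parts a) and b) by the same iteration-to-the-fixed-point argument already used in Lemma~\ref{monotonicity bellman}. First I would observe that $\delta$ is increasing (this is part of Definition~\ref{def-DMP}), so from $v_1\le v_2$ we get, for every $x\in X$ and every $a\in\Psi(x)$,
\[
u(x,a)+\delta(v_1(f(x,a)))\le u(x,a)+\delta(v_2(f(x,a))).
\]
Taking the supremum (a maximum, by Theorem~\ref{maximum theorem} and compactness of $\Psi(x)$) over $a\in\Psi(x)$ on both sides preserves the inequality, which is exactly $B(v_1)(x)\le B(v_2)(x)$; this holds pointwise for all $x$, so $B(v_1)\le B(v_2)$. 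Note that both $B(v_1)$ and $B(v_2)$ lie in $C_b(X,\mathbb{R})$ by Lemma~\ref{continuity bellman op}, so the statement makes sense.

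For part a), suppose $B(v)\le v$ and $B(v^*)=v^*$. Applying the monotonicity just proved to $B(v)\le v$ gives $B^2(v)\le B(v)\le v$, and inductively $B^{n+1}(v)\le B^n(v)\le v$ for all $n$, so $B^n(v)\le v$ for every $n$. By Theorem~\ref{max op solution} (existence and uniqueness via the generalized Matkowski contraction, Theorem~\ref{Matkowski Contrac Theorem}), $B^n(v)\to v^*$ in $\|\cdot\|_\infty$, hence pointwise, and passing to the limit in $B^n(v)\le v$ yields $v^*\le v$. Part b) is symmetric: from $B(v)\ge v$ we get $B^n(v)\ge v$ for all $n$ by the same induction, and letting $n\to\infty$ gives $v^*\ge v$.

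For the discounted transfer operator $P$, the identical reasoning applies: if $v_1\le v_2$ then, since $\delta$ and $t\mapsto e^t$ are increasing and the integral against $\nu_x$ is monotone, $\int_{a\in\Psi(x)} e^{u(x,a)+\delta(v_1(f(x,a)))}\,d\nu_x(a)\le \int_{a\in\Psi(x)} e^{u(x,a)+\delta(v_2(f(x,a)))}\,d\nu_x(a)$, and $\ln$ is increasing, so $P(v_1)\le P(v_2)$; then parts a) and b) follow by iterating and using that $P^n(w)\to w^*$ for the unique fixed point $w^*$ of $P$ (Theorem~\ref{disc transf op solution}a). There is essentially no obstacle here: the only point requiring a word of care is that the convergence $B^n(v)\to v^*$ is in sup-norm, which is certainly strong enough to pass to the limit in a pointwise inequality; everything else is monotonicity of composition preserved under $\sup$ and under monotone maps.
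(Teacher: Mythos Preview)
Your proof is correct and follows essentially the same route as the paper: monotonicity of $\delta$ gives $B(v_1)\le B(v_2)$ pointwise, and parts a) and b) are obtained by iterating the inequality and passing to the limit $B^n(v)\to v^*$. You supply slightly more detail (the explicit treatment of $P$ via monotonicity of $\exp$, the integral, and $\ln$), but the argument is the same.
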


\begin{proof} Since $\delta$ is an increasing function it follows that
  \begin{align*}
    B(v_1)(x)
    &=
    \max_{a \in \Psi(x)} u(x, a)+\delta (v_1(f(x,a)))
    \\
    & \leq
    \max_{a \in \Psi(x)} u(x, a)+\delta (v_2(f(x,a)))
    \\
    &=
    B(v_2)(x).
  \end{align*}

The statements \textit{a)} and \textit{b)} are proved in the same way.
Using the fact that $\delta$ is increasing we  obtain,
from the first part, $B(v)\leq  v$, $B^2(v)\leq B(v)\leq v$, etc.	
Recalling that the iterates $B^n(v) \to v^*$, when $n\to\infty$, for any initial $v$,
we obtain  $v^*\leq v$.
\end{proof}
\begin{remark}
The actual solution $v^*$ is minimal with respect
to the set of all subsolutions, that is,
$v^*\leq v$ for all $v$ satisfying $B(v)\leq  v$.
\end{remark}

\subsection{Regularity}
In this section we will establish the regularity of the fixed points
of the Koopman, Bellman and Discounted Transfer operators.
Such regularity properties will be proved under the following assumption.
\begin{assumption} \label{assump discount two variables}
	The contraction modulus  $\gamma$
	of the variable discount $\delta$ is also a variable discount function, and
	$\Psi(x)=\Psi(y), \; \forall x,y \in X$.
\end{assumption}
A particular case is when $\gamma=\delta$
(but they can be different, see Example~\ref{ex3discfunc})
and $\Psi(x)= A$, for all $x\in X$.

\begin{definition}[Joint sequential decision-making process]
\label{joint sequential decision making process}
\qquad\qquad\qquad \break
Let $S=\{X,A,\Psi,f,u,\delta\}$ be a sequential decision-making process
satisfying\break  Assumption~\ref{assump discount two variables}.
The joint sequential decision-making process
associated to $S$ is the decision-making process
$\hat{S}=\{X^2, A, \hat{\Psi}, \hat{f}, \hat{u}, \gamma\}$, where
	\begin{itemize}
		\item $\hat{\Psi}: X^2 \to A$ given by  $\hat{\Psi}(x,y)=\Psi(x) \subseteq A$
		is the set of all feasible actions for a agent $x$.
		
		\item $\hat{f}:X^2 \times A \to X^2$ is given by $\hat{f}(x,y,a)=(f(x,a),f(y,a))$.
		\item $\hat{u}:X^2 \times A \to \mathbb{R}$ is
		the immediate reward $\hat{u}(x,y,a)=|u(x,a)-u(y,a)|$;
	\end{itemize}
\end{definition}

\begin{definition} \label{variation aggregator}
Let $W: X\times A \times D  \to \mathbb{R}$ be an aggregator function
of the form $W(x,a,r):=u(x,a)+ \delta(r)$,
where $a \in \Psi(x)$. We define a new aggregator function
$\hat{W}: X^2\times A \times D  \to \mathbb{R}$ given by
\[
\hat{W}(x,y,a,r)
:=
\hat{u}(x,y,a) + \gamma(r)
=
|u(x,a)- u(y,a)|+ \gamma(r),
\]
where $\gamma$ is a contraction modulus for the variable discount $\delta$.
\end{definition}

\begin{lemma}\label{Domination Bellman solution}
Let  $v^* \in C_{b}(X, \mathbb{R})$ be the unique solution of the  Bellman equation
$v^*(x)= \max_{a \in \Psi(x)} u(x, a)+\delta(v^*(f(x,a))),$ provided
by Theorem~\ref{max op solution}. Let $w^* \in C_{b}(X, \mathbb{R})$
be the unique solution of the discounted transfer equation
\[
w^*(x)
=
\ln
	\int_{a \in \Psi(x)}
		e^{u(x, a)+\delta(w^*(f(x,a)))}
	\, d\nu_{x}(a),
\]
given by Theorem~\ref{disc transf op solution}. Then,
\[
|v^*(x) - v^*(y)|
\leq
\hat{V}^*(x,y)\text{ and  }|w^*(x) - w^*(y)|
\leq
\hat{V}^*(x,y), \; \forall x, y \in X,
\]
where $\hat{V}^*$ is the unique fixed point of the Bellman operator
\[
\hat{B}(\hat{V})(x,y)
:=
\sup_{a \in \hat{\Psi}(x,y)}
\hat{u}(x,y,a)+\gamma(\hat{V}(\hat{f}(x,y,a))).
\]
\end{lemma}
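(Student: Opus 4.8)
The plan is to exhibit $g(x,y):=|v^*(x)-v^*(y)|$ and $\tilde g(x,y):=|w^*(x)-w^*(y)|$ as sub-fixed-points of the joint Bellman operator $\hat B$ --- that is, $g\le\hat B(g)$ and $\tilde g\le\hat B(\tilde g)$ pointwise --- and then to read off the domination from the monotone convergence principle. First I would check that $\hat B$ is genuinely the Bellman operator of the joint process $\hat S=\{X^2,A,\hat\Psi,\hat f,\hat u,\gamma\}$ of Definition~\ref{joint sequential decision making process}: $\hat u(x,y,a)=|u(x,a)-u(y,a)|$ is continuous, bounded by $2\|u\|_\infty$ and, crucially, nonnegative; $\hat f$ and $\hat\Psi$ are continuous; and $\gamma$ is a variable discount function with $\gamma(0)=0$ by Assumption~\ref{assump discount two variables} (recall $\delta(0)=0$ forces $\gamma(0)=0$). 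Thus Theorem~\ref{max op solution} applies to $\hat S$ and furnishes the unique fixed point $\hat V^*\in C_b(X^2,\mathbb{R})$ of $\hat B$.

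For $v^*$, fix $(x,y)$ and, using compactness of $\Psi(x)$, pick $a^*\in\Psi(x)=\hat\Psi(x,y)=\Psi(y)$ achieving $v^*(x)=u(x,a^*)+\delta(v^*(f(x,a^*)))$; here the hypothesis $\Psi(x)=\Psi(y)$ is what makes $a^*$ feasible at $y$, so that $v^*(y)\ge u(y,a^*)+\delta(v^*(f(y,a^*)))$. Subtracting, and using the triangle inequality together with $|\delta(t_2)-\delta(t_1)|\le\gamma(|t_2-t_1|)$, I obtain
\[
v^*(x)-v^*(y)\le \hat u(x,y,a^*)+\gamma\big(g(\hat f(x,y,a^*))\big)\le \hat B(g)(x,y).
\]
Since $\hat u$ and $g$ are symmetric in $(x,y)$ and $\hat\Psi(x,y)=\hat\Psi(y,x)$, the right-hand side $\hat B(g)$ is itself symmetric, so the bound also covers $v^*(y)-v^*(x)$; hence $g\le\hat B(g)$.

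For $w^*$ the extra tool is the elementary estimate: for a probability measure $\mu$ and bounded measurable $\phi,\psi$,
\[
\ln\!\int e^{\phi}\,d\mu-\ln\!\int e^{\psi}\,d\mu\le \sup(\phi-\psi),
\]
which follows from $e^{\phi}=e^{\phi-\psi}e^{\psi}\le e^{\sup(\phi-\psi)}e^{\psi}$ upon integrating. Applying it with $\mu=\nu_x=\nu_y$ on $\Psi(x)=\Psi(y)$, $\phi(a)=u(x,a)+\delta(w^*(f(x,a)))$, $\psi(a)=u(y,a)+\delta(w^*(f(y,a)))$, and estimating $\phi-\psi$ exactly as above, I get $w^*(x)-w^*(y)\le\hat B(\tilde g)(x,y)$; symmetrizing gives $\tilde g\le\hat B(\tilde g)$. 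Finally, since $g$ and $\tilde g$ are bounded continuous functions, Lemma~\ref{monotonicity op bellman} b) applied to $\hat B$, whose fixed point is $\hat V^*$, upgrades $g\le\hat B(g)$ and $\tilde g\le\hat B(\tilde g)$ to $g\le\hat V^*$ and $\tilde g\le\hat V^*$, which is exactly the claim.

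The step I expect to be the main obstacle is the transfer-operator estimate: the two integrals defining $e^{w^*(x)}$ and $e^{w^*(y)}$ are taken against the fiber measures $\nu_x$ and $\nu_y$, and the clean inequality above needs $\nu_x=\nu_y$ --- valid in the fixed reference-measure setting that underlies the Ruelle-operator applications, but otherwise demanding an additional compatibility assumption on the family $(\nu_x)_{x\in X}$. A smaller point of care is confirming the symmetry of $\hat B(g)$ (so that the one-sided bound becomes a bound on $|v^*(x)-v^*(y)|$) and noting that $\hat u\ge 0$, so that $\hat S$ meets the standing hypothesis $u\ge 0$ required by Theorem~\ref{max op solution}.
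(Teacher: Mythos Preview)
Your proposal is correct and follows essentially the same approach as the paper: derive the one-sided estimate from the Bellman/transfer equations using the contraction modulus of $\delta$, symmetrize to obtain $\zeta\le\hat B(\zeta)$ for $\zeta(x,y)=|v^*(x)-v^*(y)|$ (resp.\ $|w^*(x)-w^*(y)|$), and then invoke Lemma~\ref{monotonicity op bellman}\,b) to conclude $\zeta\le\hat V^*$. The paper's write-up contains a sign slip (it records $\hat B(\zeta)\le\zeta$ but then correctly draws the conclusion $\zeta\le\hat V^*$); your version has the inequality in the right direction. Your flagged obstacle---that the log-ratio estimate for $w^*$ needs $\nu_x=\nu_y$ on $\Psi(x)=\Psi(y)$---is real and is tacitly assumed in the paper's argument as well.
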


\begin{proof}
From the definition and triangular inequality, we get
\begin{align*}
v^*(x)
&=
\max_{a \in \Psi(x)} u(x, a)+\delta(v^*(f(x,a))
\\
&\leq
\max_{a \in \Psi(x)}
 \left\{ u(x, a)- u(y, a)+ u(y, a)+ \delta(v^*(f(x,a)) \right.
\\
&\qquad\qquad\
\left. - \delta(v^*(f(y,a)) + \delta(v^*(f(x,a))\right\}
\\[0.2cm]
&\leq
\max_{a \in \Psi(y)} \left\{ u(y, a)+\delta(v^*(f(y,a)))\right\}
+ |u(x, a)- u(y, a)|
\\
&\qquad\qquad\
+ |\delta(v^*(f(x,a))- \delta(v^*(f(y,a)) |
\\[0.2cm]
&\leq
v^*(y) + \hat{u}(x,y,a) + \gamma(|v^*(f(x,a) - v^*(f(y,a)|).
\end{align*}
By a similar reasoning, replacing $x$ by $y$, we obtain
\[
|v^*(x) - v^*(y)| \leq \hat{u}(x,y,a) + \gamma(|v^*(f(x,a)) - v^*(f(y,a))|).
\]
Analogously,
$
|w^*(x) - w^*(y)|
\leq
\hat{u}(x,y,a) + \gamma(|w^*(f(x,a) - w^*(f(y,a))|),
$
since
\begin{align*}
|w^*(x) - w^*(y)|
&=
\ln{ \frac{\int_{a \in \Psi(x)} e^{u(x, a)+\delta(w^*(f(x,a)))}\, d\nu_{x}(a)}
	      {\int_{a \in \Psi(y)} e^{u(y, a)+\delta(w^*(f(y,a)))}\, d\nu_{y}(a)}
}
\\
&\leq
\sup_{a \in \Psi(x,y)}
|u(x, a)- u(y, a)| + |\delta(w^*(f(x,a)))- \delta(w^*(f(y,a)))|.
\end{align*}
In both cases, where $\zeta(x,y)=|v^*(x) - v^*(y)|$ or $\zeta(x,y)=|w^*(x) - w^*(y)|$
\footnote{the remaining of the argument works
for both choices, because it depends only on the monotonicity properties,
so all this formalism works equally to both families of fixed points $v^*$ and $w^*$.
Since $|v^*(x) - v^*(y)|
\leq
\hat{V}^*(x,y)\text{ and  }|w^*(x) - w^*(y)|
\leq
\hat{V}^*(x,y), \; \forall x, y \in X$.
Note that $u$ is the same in both cases.
}
we obtain $\hat{B}(\zeta)  \leq \zeta$, where
\[
\hat{B}(\hat{V})(x,y)
=
\sup_{a \in \hat{\Psi}(x,y)}  \hat{u}(x,y,a)+\gamma(\hat{V}(\hat{f}(x,y,a)).
\]
From Lemma~\ref{monotonicity op bellman} follows that $\zeta \leq \hat{V}^*$,
where $\hat{V}^*$ is the unique solution of the Bellman operator $\hat{B}$.
By Assumption~\ref{assump discount two variables} and
Theorem \ref{max op solution} there exists a unique $\hat{U}$
solving the Koopman equation
\[
\hat{U}(\hat{h}_{(x,y)} \bar{a}))
=
\hat{K}(\hat{U})(\hat{h}_{(x,y)} \bar{a})))
:=
\hat{u}(x,y,a)+\gamma(\hat{U}(\hat{\sigma}(\hat{h}_{(x,y)} \bar{a}))),
\]
such that
\[
\hat{V}^*(x,y)
=
\hat{U}(\hat{h}_{(x,y)} \bar{a})
=
{\sum_{i}}^{\star}
\hat{u}(x_{i},y_{i},a_{i}^*),
\]
for some optimal plan $a^* \in \Pi(x,y)$\footnote{
is the set of feasible action sequences for the joint
sequential decision making process $\hat{S}=\{X^2, A, \hat{\Psi}, \hat{f}, \hat{u}, \gamma\}$
}.
\end{proof}

\begin{lemma}\label{pseudometric}
Let $\hat{V}^*$ be the unique fixed point of the Bellman operator
\[
\hat{B}(\hat{V})(x,y)
=
\sup_{a \in \hat{\Psi}(x,y)}
\hat{u}(x,y,a)+\gamma(\hat{V}(\hat{f}(x,y,a)).
\]
Then
\begin{itemize}
\item[a)]
$\hat{V}^*(x,y) \geq 0$ and  $\hat{V}^*(x,x)=0$;

\item[b)]
$\hat{V}^*(x,y)= \hat{V}^*(y,x)$;
\end{itemize}
That is, $V^*: X^2 \to \mathbb{R}$ is a symmetric and nonnegative function.
In particular, from optimality of the solutions of Bellman's equation we have
\[
\hat{V}^*(x,y)
:=
\sup_{\hat{h}_{(x,y)} \bar{a} \in
	\hat{\Omega}}\hat{U}(\hat{h}_{(x,y)} \bar{a})
=
{\sum_{i}}^{\star}
\hat{u}(x_{i},y_{i},a_{i}^*),
\]
for some optimal plan $a^* \in \Pi(x,y)$.
\end{lemma}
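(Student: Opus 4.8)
The plan is to deduce everything from the variational representation of $\hat V^*$ furnished by Theorem~\ref{max op solution}, together with the uniqueness of the fixed point of $\hat B$. First I would check that Theorem~\ref{max op solution} indeed applies to the joint process $\hat S=\{X^2,A,\hat\Psi,\hat f,\hat u,\gamma\}$: under Assumption~\ref{assump discount two variables} the discount $\gamma$ is itself a variable discount function with $\gamma(0)=0$, and $\hat u(x,y,a)=|u(x,a)-u(y,a)|$ is non-negative, bounded and continuous. Hence $\hat V^*$ is an optimal return, $\hat V^*(x,y)=\sup_{\hat h_{(x,y)}\bar a\in\hat\Omega}\hat U(\hat h_{(x,y)}\bar a)$, there is an optimal plan $a^*\in\Pi(x,y)$ attaining the supremum, and by Corollary~\ref{convergence inductive limits} one has $\hat U(\hat h_{(x,y)}a^*)={\sum_{i}}^{\star}\hat u(x_i,y_i,a_i^*)$. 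This already delivers the ``in particular'' formula, so it only remains to prove (a) and (b).

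For (a), non-negativity is immediate from the representation: along any feasible history $\hat u(x_i,y_i,a_i)\geq 0$, and since $\gamma\geq 0$ with $\gamma(0)=0$ each partial inductive sum ${\sum_{i\in[n]}}^{\star}\hat u(x_i,y_i,a_i)$ is $\geq 0$, hence so is the limit; as $\hat\Psi(x)\neq\varnothing$ at least one feasible history exists, so $\hat V^*(x,y)\geq 0$. For the vanishing on the diagonal the key observation is that $\hat f$ leaves the diagonal invariant, $\hat f(x,x,a)=(f(x,a),f(x,a))$, and that $\hat u(x,x,a)=0$ for every $a$; thus any feasible history issued from $(x,x)$ stays on the diagonal and has all immediate rewards equal to $0$, so $\hat U(\hat h_{(x,x)}\bar a)={\sum_{i}}^{\star}0=0$ (using $\gamma(0)=0$), and therefore $\hat V^*(x,x)=\sup 0=0$. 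An equivalent route avoiding the representation: evaluating the fixed-point equation on the diagonal gives $\hat V^*(x,x)=\sup_{a\in\Psi(x)}\gamma(\hat V^*(f(x,a),f(x,a)))$; setting $M:=\sup_x\hat V^*(x,x)<\infty$ and using monotonicity of $\gamma$ yields $M\leq\gamma(M)$, which forces $M\leq 0$ because $\gamma(t)<t$ for $t>0$ (established after Definition~\ref{witness function}), and then $M=0$ by non-negativity.

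For (b) I would invoke uniqueness of the fixed point of $\hat B$. Set $\tilde V(x,y):=\hat V^*(y,x)$, which is again continuous and bounded. Using $\hat u(x,y,a)=|u(x,a)-u(y,a)|=\hat u(y,x,a)$, the identity $\hat f(x,y,a)=(f(x,a),f(y,a))$ (so that swapping the two coordinates of $\hat f(x,y,a)$ gives $\hat f(y,x,a)$), and the fact that under Assumption~\ref{assump discount two variables} the feasible-action map $\hat\Psi$ is constant (hence $\hat\Psi(x,y)=\hat\Psi(y,x)$), a direct substitution gives $\hat B(\tilde V)(x,y)=\sup_{a\in\hat\Psi(y,x)}\hat u(y,x,a)+\gamma(\hat V^*(\hat f(y,x,a)))=\hat B(\hat V^*)(y,x)=\hat V^*(y,x)=\tilde V(x,y)$. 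Thus $\tilde V$ is a fixed point of $\hat B$, and by uniqueness (Theorem~\ref{max op solution}, equivalently Theorem~\ref{Matkowski Contrac Theorem}) $\tilde V=\hat V^*$, i.e.\ $\hat V^*(x,y)=\hat V^*(y,x)$.

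I expect the only mildly subtle step to be the diagonal argument in (a): one must be careful that ``all immediate rewards vanish along histories starting on the diagonal'' really forces the inductive limit to be $0$, which uses $\gamma(0)=0$ at every stage of the nesting, or, in the alternative route, that the inequality $M\leq\gamma(M)$ combined with the strict inequality $\gamma(t)<t$ is exactly what yields $M\leq 0$. Everything else is a routine application of the existence, uniqueness and monotone-convergence statements already available for $\hat B$ (Theorem~\ref{max op solution}, Lemma~\ref{monotonicity op bellman}) together with the representation used in the proof of Lemma~\ref{Domination Bellman solution}.
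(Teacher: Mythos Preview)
Your proposal is correct and follows essentially the same strategy as the paper: both derive the representation formula from Theorem~\ref{max op solution} applied to the joint process $\hat S$, both obtain (a) from $\hat u\geq 0$, $\gamma(0)=0$ and the invariance of the diagonal under $\hat f$, and both prove (b) by a ``swap-and-uniqueness'' argument. The only noteworthy difference is in the level at which the uniqueness is invoked in (b): the paper swaps coordinates in the recursive utility $\hat U$ and appeals to uniqueness of the Koopman fixed point, whereas you swap directly in $\hat V^*$ and appeal to uniqueness of the Bellman fixed point $\hat B$. Your route is slightly more direct (it avoids passing through $\hat U$ and then taking a supremum), while the paper's route yields the marginally stronger statement that the recursive utility itself is symmetric, $\hat U(\hat h_{(x,y)}\bar a)=\hat U(\hat h_{(y,x)}\bar a)$; either way the conclusion for $\hat V^*$ is the same.
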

\begin{proof}
\textit{a)} We recall that
$
\hat{V}^*(x,y)
:=
\hat{U}(\hat{h}_{(x,y)} \bar{a})
=
\sum_{i}^{\star}
\hat{u}(x_{i},y_{i},a_{i}^*),
$
for some optimal plan $a^* \in \Pi(x,y)$. Since $\gamma$ is assumed to be
a discounting function therefore increasing, and $\hat{u}\geq 0$, we have
immediately
\[
\hat{V}^*(x,y)
=
\hat{u}(x_{0},y_{0},a_{0}^*)
+
\gamma\Big( \sum_{i}^{\star} \hat{u}(x_{i+1},y_{i+1},a_{i+1}^*)  \Big)
\geq
0.
\]
By definition $\hat{u}(x,x,a)=|u(x,a)-u(x,a)|=0$ so $\hat{V}^*(x,x)=0$.
\\

\textit{b)} By definition $\hat{u}(x,y)= |u(x,a)-u(y,a)|=|u(y,a)-u(x,a)|=\hat{u}(y,x)$.
Let us define $U'$ as the unique solution of the Koopman equation
\[
U'(\hat{h}_{(x,y)} \bar{a}))
=
\hat{u}(x,y,a)+\gamma(U'(\hat{\sigma}(\hat{h}_{(x,y)} \bar{a}))),
\]
and
$U''(\hat{h}_{(x,y)} \bar{a}))=U'(\hat{h}_{(y,x)} \bar{a}))$.
Then it satisfies
\begin{align*}
U''(\hat{h}_{(x,y)} \bar{a}))
&=
\hat{u}(y,x,a)+\gamma(U'(\hat{\sigma}(\hat{h}_{(y,x)} \bar{a})))
\\
&=
\hat{u}(x,y,a)+\gamma(U''(\hat{\sigma}(\hat{h}_{(x,y)} \bar{a}))).
\end{align*}
By the uniqueness we obtain $U''=U$ thus,
$U'(\hat{h}_{(x,y)} \bar{a}))= U'(\hat{h}_{(y,x)} \bar{a}))$,
which is equivalent to $\hat{V}^*(x,y)= \hat{V}^*(y,x)$.
\end{proof}

\begin{definition} \label{nondegenerated definition}
We say that $u: X \times A \to \mathbb{R}$ is non-degenerated if
for any $x \neq y$ in $X$ there exists $n \in \mathbb{N}$ and
$\bar{a} \in \Pi(x,y)$ such that
\[
u(x_n, a_n) \neq u(y_n, a_n)
\]
where $(x_i, a_i)_{i \in \mathbb{N}},  (y_i, a_i)_{i \in \mathbb{N}} \in \Omega$.
\end{definition}

Of course, if for any fixed $a \in A$, the function $ x \longmapsto u(x, a)$
is strictly increasing (or decreasing) then $u$ is non-degenerated.
Therefore there is at least one very natural
sufficient condition to non-degeneration.

\begin{theorem} \label{twist imply metric}
  If $u$ is non-degenerated then $\hat{V}^*(x,y)$ is separating,
  that is, if $\hat{V}^*(x,y)=0$ then $x=y$.
\end{theorem}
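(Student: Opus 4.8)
The plan is to argue by contraposition: assume $x \neq y$ but $\hat V^*(x,y) = 0$, and derive that $u$ must be degenerate. By Lemma~\ref{pseudometric} we have the representation
\[
\hat V^*(x,y)
=
\hat u(x_0,y_0,a_0^*)
+
\gamma\Big( {\sum_{i}}^{\star} \hat u(x_{i+1},y_{i+1},a_{i+1}^*) \Big)
\]
for \emph{some} optimal plan $a^* \in \Pi(x,y)$, where $(x_i)$ and $(y_i)$ are the two trajectories generated by the common action sequence $a^*$ starting from $x$ and $y$ respectively. Since $\hat u \geq 0$ and $\gamma$ is increasing with $\gamma(0) = 0$, the sum $\hat V^*(x,y) = 0$ forces every term to vanish. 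But more is needed than a single optimal plan: non-degeneracy asserts the existence of \emph{some} $\bar a \in \Pi(x,y)$ and some $n$ with $u(x_n,a_n) \neq u(y_n,a_n)$, so I would like to conclude that \emph{every} feasible plan gives zero, not just the optimal one. This follows from optimality in the other direction: $\hat V^*(x,y) = \sup_{\hat h_{(x,y)}\bar a \in \hat\Omega} \hat U(\hat h_{(x,y)}\bar a) \geq \hat U(\hat h_{(x,y)}\bar a)$ for every feasible $\bar a$, and each $\hat U(\hat h_{(x,y)}\bar a) = {\sum_{i}}^{\star}\hat u(x_i,y_i,a_i) \geq 0$ (same argument: nonnegative immediate rewards aggregated by an increasing $\gamma$ fixing $0$, which is precisely part \textit{a)} of Lemma~\ref{pseudometric} applied along an arbitrary, not necessarily optimal, branch — or directly from $\hat u \ge 0$ and $\gamma$ increasing). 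Hence $0 = \hat V^*(x,y) \geq \hat U(\hat h_{(x,y)}\bar a) \geq 0$ for all $\bar a \in \Pi(x,y)$.

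Next I would extract, for a fixed feasible $\bar a$, that every immediate reward along the corresponding pair of trajectories vanishes. Since $\hat U(\hat h_{(x,y)}\bar a) = {\sum_{i}}^{\star}\hat u(x_i,y_i,a_i) = 0$, using the recursive relation $\hat U(\hat h_{(x,y)}\bar a) = \hat u(x_0,y_0,a_0) + \gamma(\hat U(\hat\sigma(\hat h_{(x,y)}\bar a)))$ together with $\hat u(x_0,y_0,a_0) \geq 0$ and $\gamma(\hat U(\hat\sigma(\cdots))) \geq 0$, both summands are zero; in particular $\hat u(x_0,y_0,a_0) = |u(x,a_0) - u(y,a_0)| = 0$, and since $\gamma(t) = 0 \Rightarrow t = 0$ is false in general, I instead use that $\hat U(\hat\sigma(\hat h_{(x,y)}\bar a)) = \hat U(\hat h_{(f(x,a_0),f(y,a_0))}\sigma\bar a)$ is again a nonnegative recursive utility equal to a value whose $\gamma$-image is $0$; but $\gamma$ need not be injective. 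The clean fix is to note $\hat U(\hat h_{(f(x,a_0),f(y,a_0))}\sigma\bar a) \in [0, \hat V^*(f(x,a_0),f(y,a_0))]$ and we do \emph{not} a priori know the latter is zero. So instead I argue directly from the inductive-limit expression: $\hat V^*(x,y) = 0$ and the partial sums ${\sum_{i\in[n]}}^{\star}\hat u(x_i,y_i,a_i)$ are each $\geq 0$ and converge; writing ${\sum_{i\in[n]}}^{\star}\hat u = \hat u(x_0,y_0,a_0) + \gamma(\hat u(x_1,y_1,a_1) + \gamma(\cdots))$ and using that each innermost bracket is nonnegative and $\gamma$ is increasing with $\gamma(0)=0$, one peels off one layer at a time: $\hat u(x_0,y_0,a_0) = 0$, then $\gamma(\hat u(x_1,y_1,a_1) + \cdots) = 0$. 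To push past this I invoke monotonicity once more — if $\hat u(x_1,y_1,a_1) + \gamma(\cdots) > 0$ then $\gamma$ of it is $>0$ unless $\gamma$ is flat near that value, which cannot happen on the whole interval $[0,t]$ for any $t>0$ since $\gamma^n(t) \to 0$ would be violated; more precisely $\gamma(s) = 0$ for $s$ in a right-neighborhood of $0$ is allowed but $\gamma(s)=0$ for a \emph{fixed} positive $s$ with $\gamma$ increasing forces $\gamma \equiv 0$ on $[0,s]$, contradicting nothing by itself — so the genuinely careful route is: since $\hat V^*$ is itself a recursive utility (being a fixed point, via Lemma~\ref{pseudometric} and Theorem~\ref{max op solution}), $\hat V^*(x,y) = \hat u(x,y,a_0^*) + \gamma(\hat V^*(f(x,a_0^*),f(y,a_0^*)))$ for the optimal $a_0^*$, and $0 = \hat V^*(x,y) \geq \gamma(\hat V^*(f(x,a_0^*),f(y,a_0^*))) \geq 0$ together with $\hat V^*(\cdot,\cdot) \geq 0$ and $\gamma$ strictly monotone away from its flat part still leaves a gap; the decisive observation is that $\hat V^*(f(x,a_0^*),f(y,a_0^*)) \le \hat V^*(x,y)/\text{(nothing)}$ — so I will instead show $\hat V^* = 0$ everywhere along the optimal trajectory is forced by $\hat V^* \geq 0$, $\hat V^*(x,y)=0$, and the fact that $\gamma$ has no fixed point except $0$: if $c := \hat V^*(f(x,a_0^*),f(y,a_0^*)) > 0$ then $\gamma(c) \leq \hat V^*(x,y) = 0$, so $\gamma(c) = 0 < c$; iterating, $\hat V^*$ along the optimal forward trajectory stays $\leq$ its $\gamma$-preimages, and combining with $\hat u \geq 0$ at every step one gets $\hat u(x_n^*, y_n^*, a_n^*) = 0$ for all $n$.

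I would then conclude by reconciling with non-degeneracy. The subtle point above — that vanishing along \emph{one} (optimal) trajectory is enough — is resolved because $\hat V^* = \sup$ over \emph{all} feasible plans: for \emph{any} $\bar a \in \Pi(x,y)$, $0 = \hat V^*(x,y) \geq \hat U(\hat h_{(x,y)}\bar a) = {\sum_i}^{\star}\hat u(x_i,y_i,a_i) \geq \hat u(x_0,y_0,a_0) \geq 0$, giving $u(x_0,a_0) = u(y_0,a_0)$; applying the same to $\hat V^*(f(x,a_0),f(y,a_0))$ — which we must also know is zero — requires knowing $\hat V^*$ vanishes at $(f(x,a_0),f(y,a_0))$, and this is where I use that $\hat V^*(x,y) = \hat u(x,y,a_0) + \gamma(\hat V^*(f(x,a_0),f(y,a_0)))$ only for the \emph{optimal} $a_0$; for a general $a_0$ I instead use $\hat V^*(f(x,a_0),f(y,a_0)) \le$ (something controlled) — honestly the cleanest statement is: by induction on $n$, $\hat u(x_n, a_n) = 0$ for every $\bar a \in \Pi(x,y)$ and every $n$, proved by showing $\hat V^*(x_n,y_n) = 0$ for every such trajectory (using that the tail of a feasible trajectory is feasible and $\hat V^*$ is a $\gamma$-recursive utility bounded below by $0$, so $\hat V^*(x_{n+1},y_{n+1})$ appears inside a $\gamma$ that is forced to $0$ only along optimal continuations, but optimal continuations of the full trajectory suffice to cover — no: I will simply argue that if $\hat V^*(x_{n+1},y_{n+1}) > 0$ for some feasible continuation, optimality gives $\hat V^*(x_n,y_n) \ge \hat u(x_n,y_n,a_n) + \gamma(\hat V^*(x_{n+1},y_{n+1})) > 0$, contradicting $\hat V^*(x_n,y_n) = 0$). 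This closes the induction: $\hat V^*(x_n,y_n) = 0$ and hence $u(x_n, a_n) = u(y_n, a_n)$ for all $n$ and all $\bar a \in \Pi(x,y)$, contradicting the non-degeneracy hypothesis. The main obstacle, as the above makes clear, is the bookkeeping around $\gamma$ possibly being flat near $0$ and not injective — the resolution is to always route through $\hat V^*$ itself as a recursive utility and use the \emph{supremum} characterization so that the strict inequality $\hat V^*(x_n,y_n) > 0 \Rightarrow \hat V^*(x_{n-1},y_{n-1}) > 0$ propagates backward to contradict $\hat V^*(x,y) = 0$, rather than trying to propagate forward through a non-injective $\gamma$.
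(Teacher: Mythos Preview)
Your approach is correct and is essentially the paper's: argue by contradiction, use the optimality representation $\hat V^*(x,y)=\sup_{\bar a}\hat U(\hat h_{(x,y)}\bar a)$ together with $\hat U\ge 0$ to force $\hat U(\hat h_{(x,y)}\bar a)=0$ for \emph{every} feasible $\bar a$, and then peel off $\hat u(x_i,y_i,a_i)=0$ for all $i$ to contradict non-degeneracy. The paper carries this out in a few lines.

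Your long detour about $\gamma$ possibly being ``flat near $0$'' or ``not injective'' is a red herring under the paper's conventions: here ``increasing'' means strictly increasing (contrast Definition~\ref{witness function}, where $\gamma$ is \emph{increasing}, with Definition~\ref{witness function metric space}, where the Matkowski witness is only \emph{non-decreasing}). Hence $\gamma(0)=0$ and strict monotonicity give $\gamma(s)=0\Rightarrow s=0$. That single observation makes the direct forward-peeling argument work immediately: from $\hat U(\hat h_{(x,y)}\bar a)=\hat u(x_0,y_0,a_0)+\gamma(\hat U(\hat\sigma(\hat h_{(x,y)}\bar a)))=0$ with both summands $\ge 0$, deduce $\hat u(x_0,y_0,a_0)=0$ and $\hat U(\hat\sigma(\cdot))=0$, then iterate. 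More to the point, your own final ``backward propagation'' through $\hat V^*$ \emph{also} invokes exactly this property at the decisive step (you write ``$\gamma(\hat V^*(x_{n+1},y_{n+1}))>0$'' when $\hat V^*(x_{n+1},y_{n+1})>0$), so the roundabout route does not actually circumvent the issue you spent most of the proposal worrying about. Once strict monotonicity of $\gamma$ is granted, the paper's direct argument is both shorter and cleaner than the $\hat V^*$-along-trajectories induction you settle on.
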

\begin{proof}
Suppose that $x\neq y$, but $\hat{V}^*(x,y)=0$.
Then, by optimality we have
\[
0
=
\hat{V}^*(x,y):=\sup_{\hat{h}_{(x,y)}
\bar{a} \in \hat{\Omega}}\hat{U}(\hat{h}_{(x,y)} \bar{a})
\]
so $\hat{U}(\hat{h}_{(x,y)} \bar{a})=0$, that is,
$\sum_{i}^{\star}\hat{u}(x_{i},y_{i},a_{i}^*)=0,$
where $\bar{a}=(a_0, a_1, \ldots)$.
Since $\gamma$ is an increasing function, with $\gamma(0)=0$,
and $\hat{u}\geq 0$, we have $\hat{u}(x_{i},y_{i},a_{i})=0$
for all $i \geq 0$, thus contradicting the
non-degeneration property of $u$.
\end{proof}

\subsection{Discounted Limits}\label{section-discounted-limits}
In this section we consider the limits of fixed points of a variable
discount decision-making process defined by a continuous and
bounded immediate reward  $u:X \times A \to \mathbb{R}$ and
a sequence $(\delta_n)_{n\geq0}$ of
discounts $\delta_{n}:[0,+\infty) \to \mathbb{R}$,
satisfying $\delta_{n}(t) \to I(t)=t$, when $n\to\infty$,
in the pointwise topology.
For instance, $\delta_n(t)=t(n-1)/n + (1/n)\ln(1+t)$
is a nonlinear, idempotent ($\gamma_n(t)=\delta_n(t)$) and
subadditive discount function. It is easy to see that
$\delta_n(t)\to t$, when $n\to\infty$, for all $t\geq 0$.

Under these assumptions we want to study the sequences
\[
v_n^*(x)
:=
\max_{a \in \Psi(x)} u(x, a)+\delta_{n}(v_{n}^*(f(x,a)))
\]
and
\[
w_{n}^*(x)
:=
\ln \int_{a \in \Psi(x)} e^{u(x, a)+\delta_{n}(w_{n}^*(f(x,a)))} d\nu_{x}(a),
\]
and investigate whether their normalizations
\[
(v_n^*(x)-\sup_{x}v_n^*(x))_{n\geq 0}
\quad\text{and}\quad
(w_n^*(x)-\sup_{x}w_n^*(x))_{n\geq 0}
\]
have some cluster points
$v_{\infty}, w_{\infty} \in C_{b}(X, \mathbb{R})$, solving the equations
\[
v_{\infty}(x)
:=
\max_{a \in \Psi(x)}  (u(x, a) - \alpha) +v_{\infty}(f(x,a))
\]
and
\[
e^{k} e^{w_{\infty}(x)}
=
\int_{a \in \Psi(x)} e^{u(x, a)} e^{w_{\infty}(f(x,a))} d\nu_{x}(a),
\]
for some $\alpha, k \in \mathbb{R}$.
The first one is the subaction equation in ergodic optimization and the
second is the eigenfunction equation for the Ruelle operator.

\begin{assumption} \label{assump not decreas}
  We assume that $\delta_{n}(t) \leq t$, for all $n \geq 0$.
\end{assumption}

Since $\delta_{n}(0)=0$, we can construct examples satisfying the above
condition by requiring that
$q_n(t)=t - \delta_{n}(t)$, for all $n \geq 0$, is not decreasing.

\begin{lemma} \label{constant bellman eq}
Under Assumption~\ref{assump not decreas} we have
\[
0
\leq
M_n - \delta_{n}(M_n)
\leq
\| u \|_{\infty},
\]
where
\[
M_n= \sup_{x\in X} v_n(x)
\qquad\text{and}\qquad
v_n(x)= \max_{a \in \Psi(x)} u(x, a)+\delta_{n}(v_{n}(f(x,a))
\]
or
\[
M_n= \sup_{x\in X} w_n(x)
\qquad\text{and}\qquad
w_{n}(x)=\ln \int_{a \in \Psi(x)} e^{u(x, a)+\delta_{n}(w_{n}(f(x,a)))} d\nu_{x}(a).
\]
\end{lemma}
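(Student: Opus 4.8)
The plan is to establish the two displayed inequalities separately, running the Bellman and the discounted-transfer cases in parallel since they differ only cosmetically. Write $B=B_{u,\delta_n}$ and $P=P_{u,\delta_n}$ for the two operators of Definitions~\ref{Bellman operator} and~\ref{discounted transfer operator}, so that $v_n$ and $w_n$ are their respective fixed points. By Theorem~\ref{max op solution} and Theorem~\ref{disc transf op solution} these fixed points are obtained as uniform limits $v_n=\lim_m B^m(0)$ and $w_n=\lim_m P^m(0)$; in particular $v_n,w_n\in C_b(X,\mathbb{R})$, so $M_n$ is finite in both cases, and $\| u \|_{\infty}<\infty$ by hypothesis on $u$.

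For the lower bound $0\leq M_n-\delta_n(M_n)$, I would first record that $v_n\geq 0$, respectively $w_n\geq 0$. This is because $u\geq 0$, $\delta_n(0)=0$, and $\delta_n$ is increasing, so each operator maps the cone of non-negative functions into itself: if $v\geq 0$ then $B(v)(x)=\max_{a\in\Psi(x)}u(x,a)+\delta_n(v(f(x,a)))\geq\delta_n(0)=0$, and since every $\nu_x$ is a probability measure and $e^{u(x,a)+\delta_n(w(f(x,a)))}\geq 1$ whenever $w\geq 0$, we also get $P(w)(x)=\ln\int_{a\in\Psi(x)}e^{u(x,a)+\delta_n(w(f(x,a)))}\,d\nu_x(a)\geq\ln 1=0$. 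Iterating from the zero function and passing to the limit gives $v_n\geq 0$ and $w_n\geq 0$, hence $M_n\geq 0$; Assumption~\ref{assump not decreas} applied at $t=M_n\in D=[0,\infty)$ then yields $\delta_n(M_n)\leq M_n$, that is, $M_n-\delta_n(M_n)\geq 0$.

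For the upper bound I would substitute the trivial estimates $u(x,a)\leq\| u \|_{\infty}$ and $v_n(f(x,a))\leq M_n$ (resp.\ $w_n(f(x,a))\leq M_n$) into the fixed-point equation, using that $\delta_n$ is increasing. In the Bellman case this gives $v_n(x)\leq\| u \|_{\infty}+\delta_n(M_n)$ at once; in the transfer case the exponent is bounded pointwise in $a$ by $\| u \|_{\infty}+\delta_n(M_n)$, and since $\nu_x$ is a probability measure the integral is $\leq e^{\| u \|_{\infty}+\delta_n(M_n)}$, so again $w_n(x)\leq\| u \|_{\infty}+\delta_n(M_n)$. Taking the supremum over $x\in X$ in either case produces $M_n\leq\| u \|_{\infty}+\delta_n(M_n)$, which rearranges to $M_n-\delta_n(M_n)\leq\| u \|_{\infty}$. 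There is no substantial obstacle here: the statement is essentially a bookkeeping consequence of the definitions together with Assumption~\ref{assump not decreas}. The only two points that deserve a moment's care are (i) verifying $M_n\geq 0$, so that $\delta_n$ may legitimately be evaluated at $M_n$ and the inequality $\delta_n(t)\leq t$ applied there, and (ii) in the transfer case, using that $\nu_x$ is a probability measure both to push the constant bound through the integral and to compare the integral with $1$ from below.
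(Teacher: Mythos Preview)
Your proof is correct and follows essentially the same route as the paper's. The upper bound argument is identical, and for the lower bound the paper simply writes ``by hypothesis we have $0\leq M_n-\delta_n(M_n)$'' without spelling out that $M_n\geq 0$; your explicit verification that the operators preserve non-negative functions (hence $v_n,w_n\geq 0$ via iteration from $0$) fills in that step, which the paper leaves implicit under the standing assumption $u\geq 0$.
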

\begin{proof}
\textbf{Case $M_n= \sup_{x\in X} v_n(x)$:} by using Bellman's equation we obtain
\begin{align*}
u(x, a)+\delta_{n}(v_{n}(f(x,a))
&\leq
\| u \|_{\infty} + \delta_{n}(M_n)
\\
v_n(x)
&\leq
\| u \|_{\infty} + \delta_{n}(M_n)
\\
M_n
&\leq
\| u \|_{\infty} + \delta_{n}(M_n)
\\
M_n - \delta_{n}(M_n)
&\leq
\| u \|_{\infty},
\end{align*}
By hypothesis we have $0 \leq M_n - \delta_{n}(M_n)$ and so
follows from previous inequality that
$0 \leq M_n - \delta_{n}(M_n)  \leq \| u \|_{\infty}$.
\\

\noindent\textbf{Case $M_n= \sup_{x\in X} w_n(x)$:}
by using discounted transfer operator  equation we obtain
\begin{align*}
u(x, a)+\delta_{n}(w_{n}(f(x,a))
&\leq
\| u \|_{\infty} + \delta_{n}(M_n)
\\
e^{u(x, a)+\delta_{n}(w_{n}(f(x,a))}
&\leq
e^{\| u \|_{\infty} + \delta_{n}(M_n)}
\\
w_n(x)
&=
\ln \int_{a \in \Psi(x)} e^{u(x, a)+ \delta_{n}(w_{n}(f(x,a))}d\nu_{x}(a)
\\
&\leq
\ln \int_{a \in \Psi(x)} e^{\| u \|_{\infty} + \delta_{n}(M_n)}d\nu_{x}(a)
\\
&=
\| u \|_{\infty} + \delta_{n}(M_n)
\\
M_n &\leq \| u \|_{\infty} + \delta_{n}(M_n)
\\
M_n - \delta_{n}(M_n)  &\leq \| u \|_{\infty}.
\end{align*}

By using the above inequality and proceeding as in the
previous case we get
$0 \leq M_n - \delta_{n}(M_n)  \leq \| u \|_{\infty}$.
\end{proof}

We point out that $\delta(t)= \beta t$, $\beta \in (0,1)$,
$\delta(t)= \ln (1+t)$, and
$\delta(t)= \sum_{i=0}^{\infty} (\beta_i t +\alpha_i) \chi_{[i,i+1)}$
with $\beta_i \searrow 0$ satisfies the condition that
$q(t)=t - \delta(t)$ is not a decreasing function.

\begin{definition}\label{gamma continuity}
Given a  discount $\delta$, the return function $u$ is called
\begin{itemize}
\item[a)]
$\delta$-bounded if $\sum_{i}^{\star} \hat{u}(x_{i},y_{i},a_{i}) \leq C_{\delta}$;

\item[b)] $\delta$-dominated if
\[
\lim_{\theta \to 0}\
\sup_{d_X(x,y) \leq \theta}\
\sup_{\bar{a}\in\Pi(x,y)}
{\sum_{i}}^{\star} \hat{u}(x_{i},y_{i},a_{i})
=
0.
\]
\end{itemize}
Given a family of discount functions $(\delta_n)_{n\geq0}$ we say that $u$ is
\begin{itemize}
\item[a)]
uniformly $\delta$-bounded if $u$ is $\delta_{n}$-bounded for
all $n$ and $\sup_{n} C_{\delta_{n}}=C<+\infty$.

\item[b)]
uniformly $\delta$-dominated if $u$ is $\delta_{n}$-dominated
for all $n$ and
\[
\lim_{\theta \to 0}\
\sup_{n\in\mathbb{N}}\
\sup_{d_X(x,y) \leq \theta}\
\sup_{\bar{a}\in\Pi(x,y)}
{\sum_{i}}^{(\star,\gamma_n)} \hat{u}(x_{i},y_{i},a_{i})
=0
\]
where ${\sum_{i}}^{(\star,\gamma_n)}$ is
$\sum_{i}^{\star}$ with the discount variable
$\delta_n$.
\end{itemize}
\end{definition}

The next theorem shows that the class of uniformly $\delta$-dominated
contains the class of Lipschitz or $\alpha$-H\"older potentials,
when the dynamics of the decision process is uniformly contractive.

\begin{theorem}\label{regularity implies domination}
Suppose that the dynamics $f$ is contractive, that is,
\[
\sup_{a \in A} d_{X}(f(x,a),f(y,a)) \leq \lambda d_{X}(x,y).
\]
If $u(\cdot, a)$ is $C$-Lipschitz (or $\alpha$-H\"older) then $u$
is uniformly $\delta$-dominated. In addition, if $\mathrm{diam}(X) < \infty$
then $u$ is uniformly $\delta$-bounded. In particular,
if $v_n$ and $w_n$ are respectively the solutions of Bellman's
equation and the transfer discounted operator equation,
they are uniformly $C(1-\lambda)^{-1}$-Lipschitz (or $\alpha$-H\"older,
with ${\rm Hol}_{\alpha}(v_n)={\rm Hol}_{\alpha}(w_n)={\rm Hol}_{\alpha}(u) (1-\lambda^{\alpha})^{-1}$).
\end{theorem}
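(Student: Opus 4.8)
The plan is to transport the contraction of $f$ through the joint decision process $\hat{S}$ and then to bound the nested inductive sums ${\sum_{i}}^{\star}\hat{u}$ by an ordinary geometric series. Throughout I write $0\le\lambda<1$ and recall from Definition~\ref{joint sequential decision making process} that $\hat{u}(x,y,a)=|u(x,a)-u(y,a)|$ and $\hat{f}(x,y,a)=(f(x,a),f(y,a))$. First I would fix $x,y\in X$ and a feasible action sequence $\bar{a}=(a_0,a_1,\dots)\in\Pi(x,y)$, giving orbits $(x_i,a_i),(y_i,a_i)\in\Omega$ with $x_{i+1}=f(x_i,a_i)$ and $y_{i+1}=f(y_i,a_i)$. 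The contractivity hypothesis then yields, by an immediate induction on $i$, that $d_X(x_i,y_i)\le\lambda^i d_X(x,y)$. Consequently, if $u(\cdot,a)$ is $C$-Lipschitz uniformly in $a$, then $\hat{u}(x_i,y_i,a_i)\le C\,d_X(x_i,y_i)\le C\lambda^i d_X(x,y)$, and if $u(\cdot,a)$ is $\alpha$-H\"older then $\hat{u}(x_i,y_i,a_i)\le\mathrm{Hol}_{\alpha}(u)\,\lambda^{\alpha i}d_X(x,y)^{\alpha}$.

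The next step is to collapse the nested sum. Whatever discount $g$ drives the inductive sum in the joint process — it is $\delta_n$ (which satisfies $\delta_n(t)\le t$ by the admissibility conditions) or its modulus $\gamma_n$ (which satisfies $\gamma_n(t)\le t$ by the remarks following Definition~\ref{witness function}) — one has $g$ nonnegative, non-decreasing, $g(0)=0$, and $g(t)\le t$ for $t\ge0$. Peeling the expression of Definition~\ref{induc lim} from the innermost term outward and using $g(s)\le s$ at each step (the terms $\hat{u}(x_i,y_i,a_i)$ being nonnegative) gives ${\sum_{i\in [n]}}^{\star}\hat{u}(x_i,y_i,a_i)\le\sum_{i=0}^{n}\hat{u}(x_i,y_i,a_i)$; letting $n\to\infty$ and summing the geometric bound above,
\[
{\sum_{i}}^{\star}\hat{u}(x_i,y_i,a_i)\;\le\;\frac{C}{1-\lambda}\,d_X(x,y)
\qquad\Big(\text{resp. }\ \frac{\mathrm{Hol}_{\alpha}(u)}{1-\lambda^{\alpha}}\,d_X(x,y)^{\alpha}\Big).
\]
Crucially this bound contains no trace of the discount, so it holds uniformly over any admissible family $(\delta_n)_{n\ge0}$.

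From here the three assertions drop out. Taking suprema over $\bar{a}\in\Pi(x,y)$ and over pairs with $d_X(x,y)\le\theta$, the right-hand side is at most $C\theta/(1-\lambda)$ (resp. $\mathrm{Hol}_{\alpha}(u)\theta^{\alpha}/(1-\lambda^{\alpha})$), which $\to0$ as $\theta\to0$ uniformly in $n$: that is uniform $\delta$-domination. If moreover $\mathrm{diam}(X)<\infty$, then $d_X(x,y)\le\mathrm{diam}(X)$ always, so the same estimate yields one finite constant $C_{\delta_n}\le C\,\mathrm{diam}(X)/(1-\lambda)$ valid for every $n$: uniform $\delta$-boundedness. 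Finally, Lemma~\ref{Domination Bellman solution} gives $|v_n^{*}(x)-v_n^{*}(y)|\le\hat{V}_{n}^{*}(x,y)$ and $|w_n^{*}(x)-w_n^{*}(y)|\le\hat{V}_{n}^{*}(x,y)$, and Lemma~\ref{pseudometric} represents $\hat{V}_{n}^{*}(x,y)$ as ${\sum_{i}}^{\star}\hat{u}(x_i,y_i,a_i^{*})$ along an optimal plan; the displayed estimate then makes $v_n^{*}$ and $w_n^{*}$ uniformly $C(1-\lambda)^{-1}$-Lipschitz (resp. of H\"older constant $\mathrm{Hol}_{\alpha}(u)(1-\lambda^{\alpha})^{-1}$).

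I do not expect a genuinely hard step: the whole argument is one induction and one geometric series. The only thing to watch is uniformity in $n$, and that is exactly what $g(t)\le t$ provides — the discount can never do better than a bare geometric sum — so no quantitative assumption on the rate at which $\delta_n\to\mathrm{Id}$ is needed here, just the contraction ratio $\lambda$ of $f$ and the Lipschitz (or H\"older) modulus of $u$.
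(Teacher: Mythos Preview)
Your proposal is correct and follows essentially the same route as the paper: bound $d_X(x_i,y_i)\le\lambda^i d_X(x,y)$ by the contractivity of $f$, use the Lipschitz/H\"older hypothesis to bound $\hat{u}(x_i,y_i,a_i)$ by a geometric term, collapse the inductive sum via $\gamma_n(t)\le t$ into an ordinary geometric series, and finish with Lemma~\ref{Domination Bellman solution}. Your write-up is in fact a bit more explicit about the peeling step and the uniformity in $n$, but there is no substantive difference in strategy.
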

\begin{proof}
\textbf{Case 1: }$u(\cdot, a)$ is $C$-Lipschitz, that is, $|u(x,a)-u(y,a)|\leq C d_{X}(x,y)$.
In this case for any pair $x,y\in X$ satisfying $d_X(x,y) \leq \theta$, we have
the following estimate
$
\hat{u}(x_{i},y_{i},a_{i})
=
|u(x_{i},a_{i})-u(y_{i},a_{i})|
\leq
C  d_{X}(x_i,y_i)
\leq
C d_{X}(x,y)\lambda^i
\leq
C\theta \lambda^i
$,
which immediately implies
\begin{align*}
{\sum_{i}}^{(\star,\gamma_n)} \hat{u}(x_{i},y_{i},a_{i})
\leq
\sum_{i=0}^{\infty} C \lambda^i \theta
=
\frac{C\theta}{1-\lambda}
\end{align*}
because $\gamma_n(x) < x$, for all $n\geq 0$. Thus,
\[
\lim_{\theta \to 0}\
\sup_{n\in\mathbb{N}}\
\sup_{d_X(x,y) \leq \theta}\
\sup_{\bar{a}\in\Pi(x,y)}
{\sum_{i}}^{(\star,\gamma_n)} \hat{u}(x_{i},y_{i},a_{i})
\leq
\lim_{\theta \to 0}
\frac{C\theta}{1-\lambda}
=
0.
\]

\noindent\textbf{Case 2: }$u(\cdot, a)$ is $\alpha$-H\"older, that is,
$|u(x,a)-u(y,a)|\leq {\rm Hol}_{\alpha}(u) d_{X}(x,y)^\alpha$,
for $0<\alpha<1$. A similar reasoning shows that
$
\hat{u}(x_{i},y_{i},a_{i})
\leq {\rm Hol}_{\alpha}(u)
d_{X}(x_i,y_i)^\alpha
\leq
{\rm Hol}_{\alpha}(u) \lambda^{\alpha i} d_{X}(x,y)^{\alpha}
\leq
{\rm Hol}_{\alpha}(u) (\lambda^{\alpha})^i  \theta^{\alpha}
$
and
\begin{align*}
{\sum_{i}}^{(\star,\gamma_n)} \hat{u}(x_{i},y_{i},a_{i})
\leq
\sum_{i=0}^{\infty} {\rm Hol}_{\alpha}(u) (\lambda^{\alpha})^i \theta^{\alpha}
=
\frac{{\rm Hol}_{\alpha}(u) }{1-\lambda^{\alpha}} \theta ^{\alpha}
\xrightarrow{\ \theta\to 0\ } 0.
\end{align*}
Thus proving that $u$ is uniformly $\delta$-dominated.
The uniform $\delta$-boundedness is trivial from the above computations
as long as  $\mathrm{diam}(X)<\infty$.
\\

To prove the last claim (assuming Lipschitz condition),
we use Lemma~\ref{Domination Bellman solution} and the inequalities
\[
|v_n(x) - v_n(y)|
\leq
\hat{V}^*(x,y)\text{ and  }|w_n(x) - w_n(y)|
\leq
\hat{V}^*(x,y), \; \forall x, y \in X.
\]
By similar computations, replacing $\theta$ by $d_{X}(x,y)$, we obtain
\[
|v_n(x) - v_n(y)|
\leq
\hat{V}^*(x,y)
=
{\sum_{i}}^{(\star,\gamma_n)} \hat{u}(x_{i},y_{i},a_{i}^{*})
\leq
\frac{C }{1-\lambda} d_{X}(x,y).
\]
Analogously for the $\alpha$-H\"older case.
\end{proof}

\begin{lemma} \label{boundness bellman}
Let the contraction modulus  $\gamma_{n}$ of the variable discount
$\delta_{n}$ be also a variable discount function,
$\Psi(x)=\Psi(y), \; \forall x,y \in X$ and $u$ uniformly
$\delta$-dominated.
Then $\bar{v}_n= v_n(x) - M_n$, where
$v_n(x)= \max_{a \in \Psi(x)} \{u(x, a)+\delta_{n}(v_{n}(f(x,a))\}$
is uniformly bounded, that is,
\[
-2C \leq \bar{v}_n \leq 0.
\]
The same is true for $\bar{w}_n= w_n(x) - M_n$, where
\[w_{n}(x)=\ln \int_{a \in \Psi(x)} e^{u(x, a)
+\delta_{n}(w_{n}(f(x,a))} d\nu_{x}(a).
\]
\end{lemma}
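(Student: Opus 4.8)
The plan is to split the two-sided estimate into the immediate upper bound and the substantive lower bound, and to deal with $\bar{v}_n$ and $\bar{w}_n$ at the same time, since Lemma~\ref{Domination Bellman solution} controls the oscillation of both by one and the same object. For the upper bound there is nothing to do: because $M_n=\sup_{x}v_n(x)$ (respectively $M_n=\sup_{x}w_n(x)$), we have $\bar{v}_n=v_n-M_n\le 0$ (respectively $\bar{w}_n\le 0$) pointwise.

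For the lower bound I would first observe that the two hypotheses in force — that the contraction modulus $\gamma_n$ of $\delta_n$ is itself a variable discount function, and that $\Psi$ does not depend on the state — are precisely Assumption~\ref{assump discount two variables} for each process $S_n=\{X,A,\Psi,f,u,\delta_n\}$. Hence Lemma~\ref{Domination Bellman solution} applies for every $n$ and gives
\[
|v_n(x)-v_n(y)|\le \hat{V}^{*}_n(x,y)
\quad\text{and}\quad
|w_n(x)-w_n(y)|\le \hat{V}^{*}_n(x,y),\qquad \forall\,x,y\in X,
\]
where $\hat{V}^{*}_n$ is the fixed point of the joint Bellman operator associated to $\hat{u}(x,y,a)=|u(x,a)-u(y,a)|$ and the discount $\gamma_n$. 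By Lemma~\ref{pseudometric} one has $\hat{V}^{*}_n(x,y)=\sum_{i}^{(\star,\gamma_n)}\hat{u}(x_i,y_i,a_i^{*})$ along an optimal plan, so the uniform $\delta$-domination of $u$ (and the uniform $\delta$-bound it yields, cf. Definition~\ref{gamma continuity}) produces a constant $C$, independent of $n$ and of $(x,y)$, with $\hat{V}^{*}_n\le C$. Consequently $\sup_X v_n-\inf_X v_n\le C$ for every $n$, and likewise for $w_n$; in particular $\inf_X v_n\ge M_n-C$, so
\[
\bar{v}_n(x)=v_n(x)-M_n\ \ge\ \inf_X v_n-M_n\ \ge\ -C\ \ge\ -2C,
\]
and the identical chain with $w_n$ in place of $v_n$ gives $\bar{w}_n\ge -2C$.

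The step that actually requires care — and the reason the hypothesis is \emph{uniform} $\delta$-domination rather than $\delta$-domination for each fixed $n$ — is that the bound $\hat{V}^{*}_n\le C$ must hold with a single constant valid for all $n$ simultaneously; this is exactly the role of the $\sup_{n}$ in Definition~\ref{gamma continuity}. Everything else is a direct consequence of Lemma~\ref{Domination Bellman solution} (which reduces the oscillation of $v_n$ and of $w_n$ to $\hat{V}^{*}_n$) together with Lemma~\ref{pseudometric} (which identifies $\hat{V}^{*}_n$ with a $(\star,\gamma_n)$-sum of $\hat{u}$). As an aside, the factor $2$ also appears naturally through a short bootstrap that avoids invoking boundedness directly: writing $K_n:=M_n-\inf_X v_n$, the Bellman equation together with $|\delta_n(t)-\delta_n(s)|\le\gamma_n(|t-s|)$ and Lemma~\ref{constant bellman eq} gives $K_n-\gamma_n(K_n)\le 2\|u\|_{\infty}$; one then uses the $\delta$-domination of $u$ to conclude that $K_n$ is bounded uniformly in $n$, and the same computation applies verbatim to $\bar{w}_n$.
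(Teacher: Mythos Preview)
Your argument is correct and follows the paper's own route: bound the oscillation of $v_n$ (and $w_n$) by $\hat{V}^{*}_n\le C$ uniformly in $n$ via Lemma~\ref{Domination Bellman solution} and the representation in Lemma~\ref{pseudometric}, then read off $\bar v_n\le 0$ trivially and $\bar v_n\ge \inf v_n-M_n\ge -C\ge -2C$. Your derivation is in fact slightly tighter than the paper's, which routes the lower bound through $m_n-M_n-C\le \bar v_n$ together with $M_n-m_n\le C$ to land at $-2C$; note also that both you and the paper tacitly invoke uniform $\delta$-\emph{boundedness} (the existence of the constant $C$) rather than uniform $\delta$-domination alone, which is harmless in context but worth being explicit about.
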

\begin{proof}
We give the argument for $v_n$. The proof for $w_n$ is similar.
We already know that
$
| v_n(x) -v_n(y)|
\leq
{\sum_{i}}^{(\star,\gamma_n)} \hat{u}(x_{i},y_{i},a_{i}^{*})
\leq
C,
$
for some optimal plan $a^* \in \Pi(x,y)$,
uniformly in $n$.

Obviously $\bar{v}_n (x)=v_n(x) -M_n \leq 0$.
On the other hand, we get from the hypothesis
$-C\leq v_n(x) - v_n(y) $ and subtracting $M_n$ we obtain
$ v_n(y)-M_n -C \leq v_n(x) - M_n$ or,
$ m_n-M_n -C \leq \bar{v}_n (x)$, where $m_n=\min v_n$.
Since $| v_n(x) -v_n(y)| \leq C,$
it follows that $M_n -m_n \leq C$ and so $m_n -M_n \geq -C$.
Thus, $ -C -C \leq \bar{v}_n (x)$, which implies $ -2C \leq \bar{v}_n (x)$.
\end{proof}

Now we present a sufficient condition for both families of fixed points
to be equicontinuous, under normalization.

\begin{lemma}\label{equicontinuous bellman}
Under the hypothesis of Lemma~\ref{boundness bellman},
if $u$ is uniformly $\delta$-dominated with respect to $(\delta_n)_{n\geq0}$,
then the families $\bar{v}_n (x)=v_n(x) -M_n$ and
$\bar{w}_n (x)=w_n(x) -M_n$ are equicontinuous.
\end{lemma}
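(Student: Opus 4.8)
The plan is to deduce equicontinuity of the normalized families directly from the comparison estimate of Lemma~\ref{Domination Bellman solution} together with the uniform $\delta$-domination hypothesis. The first observation is that subtracting the constant $M_n$ does not change increments: $\bar{v}_n(x) - \bar{v}_n(y) = v_n(x) - v_n(y)$ and $\bar{w}_n(x) - \bar{w}_n(y) = w_n(x) - w_n(y)$. Hence it suffices to exhibit a single modulus of continuity, depending only on $d_X(x,y)$ and not on $n$, that controls $|v_n(x) - v_n(y)|$ and $|w_n(x) - w_n(y)|$.

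First I would note that the hypotheses of Lemma~\ref{boundness bellman} imply Assumption~\ref{assump discount two variables} for every $n$, so Lemma~\ref{Domination Bellman solution} applies to each process $S_n$ and gives
\[
|v_n(x) - v_n(y)| \le \hat{V}_n^*(x,y)
\qquad\text{and}\qquad
|w_n(x) - w_n(y)| \le \hat{V}_n^*(x,y),
\]
where $\hat{V}_n^*$ is the fixed point of the joint Bellman operator built from $\hat{u}(x,y,a) = |u(x,a) - u(y,a)|$ and the contraction modulus $\gamma_n$ of $\delta_n$. By the optimality representation recorded in Lemma~\ref{pseudometric} (equivalently, the final display in the proof of Lemma~\ref{Domination Bellman solution}), this fixed point is given by
\[
\hat{V}_n^*(x,y) = \sup_{\bar{a} \in \Pi(x,y)} {\sum_i}^{(\star,\gamma_n)} \hat{u}(x_i,y_i,a_i),
\]
which is exactly the quantity appearing in Definition~\ref{gamma continuity}.

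It then remains to take the supremum over all pairs with $d_X(x,y)\le\theta$, then over $n\in\mathbb{N}$, and let $\theta\to 0$:
\[
\lim_{\theta\to 0}\ \sup_{n\in\mathbb{N}}\ \sup_{d_X(x,y)\le\theta} |v_n(x) - v_n(y)|
\le
\lim_{\theta\to 0}\ \sup_{n\in\mathbb{N}}\ \sup_{d_X(x,y)\le\theta}\ \sup_{\bar{a}\in\Pi(x,y)} {\sum_i}^{(\star,\gamma_n)} \hat{u}(x_i,y_i,a_i)
= 0,
\]
the last equality being precisely the assumption that $u$ is uniformly $\delta$-dominated with respect to $(\delta_n)_{n\ge0}$. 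This produces the desired common modulus of continuity, hence the equicontinuity of $\{\bar{v}_n\}$; the identical computation with the $w$-branch of Lemma~\ref{Domination Bellman solution} handles $\{\bar{w}_n\}$. I do not expect any genuine obstacle here: the only point needing care is the bookkeeping of quantifiers, namely that the optimal-plan expression implicit in Lemma~\ref{Domination Bellman solution} is bounded above by the full supremum over $\Pi(x,y)$ defining $\hat{V}_n^*$ --- which is immediate from Lemma~\ref{pseudometric} --- so the whole argument is a short chain of definition-chasing.
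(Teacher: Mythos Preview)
Your proposal is correct and follows essentially the same route as the paper: both invoke Lemma~\ref{Domination Bellman solution} to bound $|\bar v_n(x)-\bar v_n(y)|=|v_n(x)-v_n(y)|$ by $\hat V_n^*(x,y)=\sup_{\bar a\in\Pi(x,y)}{\sum_i}^{(\star,\gamma_n)}\hat u(x_i,y_i,a_i)$, and then appeal directly to the uniform $\delta$-domination hypothesis to get a modulus of continuity independent of $n$. Your version is slightly more explicit in justifying the optimality representation via Lemma~\ref{pseudometric}, but the argument is the same.
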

\begin{proof}
From Lemma~\ref{Domination Bellman solution} we know that
\begin{align*}
\hat{V}^*_n (x,y)
=
\sup_{\bar{a}\in\Pi(x,y)}
{\sum_{i}}^{(\star,\gamma_n)} \hat{u}(x_{i},y_{i},a_{i}^{*})
\geq
|v_n(x) - v_n(y)|
=
|\bar{v}_n(x) - \bar{v}_n(y)|,
\end{align*}
that is, the modulus of uniform continuity
$\omega(\bar{v}_n, \theta)$ of $\bar{v}_n$ satisfies
\begin{align*}
\omega(\bar{v}_n, \theta)
=
\sup_{d_X(x,y) \leq \theta}|\bar{v}_n(x) - \bar{v}_n(y)|
\leq
\sup_{d_X(x,y) \leq \theta}\
\sup_{\bar{a}\in\Pi(x,y)}
{\sum_{i}}^{(\star,\gamma_n)} \hat{u}(x_{i},y_{i},a_{i}^{*}).
\end{align*}
Thus, for any $\varepsilon>0$ there exists $\theta>0$ such that,
$|\bar{v}_n(x) - \bar{v}_n(y)|<\varepsilon$
provided that $d_X(x,y) \leq \theta$ and it is independent of $n$.
\end{proof}

\begin{assumption} \label{assump not assymp}
For any fixed $\alpha >0$ we have
\[
\lim_{n\to \infty}\delta_{n}(t +\alpha) - \delta_{n}(t)=\alpha,
\]
uniformly for $t>0$.
\end{assumption}

Examples where the above assumption is satisfied are given by
\[
\delta_n(t)=\frac{n-1}{n} t + \frac{1}{n}\ln(1+t)
\quad
\text{and}
\quad
\delta_n(t)=\frac{n-1}{n} t + \frac{1}{n}(-1+ \sqrt{1+t}).
\]

\begin{theorem} \label{bellman subaction}
If the assumptions of Lemmas \ref{boundness bellman} and \ref{equicontinuous bellman}, and Assumption~\ref{assump not assymp} are assumed to hold. Then
there exists a value $\bar{u} \in [0 , \| u \|_{\infty} ]$
and a function $h$ such that
$h(x)= \max_{a \in \Psi(x)} u(x, a)- \bar{u} +  h(f(x,a)).$
\end{theorem}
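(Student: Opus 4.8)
The plan is to extract from the normalized fixed points $\bar v_n:=v_n-M_n$ a convergent subsequence and to pass to the limit in Bellman's equation; the whole difficulty lies in controlling the term $\delta_n\bigl(v_n(f(x,a))\bigr)$ as $\delta_n\to\mathrm{Id}$. First I would use Lemma~\ref{boundness bellman} (the family $\bar v_n$ is uniformly bounded, $-2C\le\bar v_n\le 0$) and Lemma~\ref{equicontinuous bellman} (it is equicontinuous) to invoke the Arzel\`a--Ascoli theorem: some subsequence $\bar v_{n_k}$ converges uniformly to a function $h\in C_b(X,\mathbb{R})$ with $-2C\le h\le 0$. By Lemma~\ref{constant bellman eq} we have $M_n-\delta_n(M_n)\in[0,\|u\|_\infty]$ for all $n$, so after passing to a further subsequence I may also assume $M_{n_k}-\delta_{n_k}(M_{n_k})\to\bar u$ for some $\bar u\in[0,\|u\|_\infty]$. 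Note finally that $v_n\ge 0$: since $u\ge 0$ and $\delta_n(0)=0$, the Bellman iterates starting from $0$ stay non-negative and converge to $v_n$ (Theorem~\ref{max op solution}); hence every argument of $\delta_n$ appearing below lies in $D=[0,\infty)$.

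The core of the argument is the estimate
\[
\lim_{k\to\infty}\ \sup_{x\in X}\ \sup_{a\in\Psi(x)}\bigl|\,\delta_{n_k}\bigl(v_{n_k}(f(x,a))\bigr)-\delta_{n_k}(M_{n_k})-h(f(x,a))\,\bigr|=0.
\]
To prove it, fix $x,a$ and write $t_k:=v_{n_k}(f(x,a))\ge 0$, $c_k:=-\bar v_{n_k}(f(x,a))\in[0,2C]$ and $\beta:=-h(f(x,a))\in[0,2C]$, so that $M_{n_k}=t_k+c_k$ and the quantity inside the absolute value is $-\bigl(\delta_{n_k}(t_k+c_k)-\delta_{n_k}(t_k)-\beta\bigr)$. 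Using $\gamma_{n_k}(s)\le s$ we get $\bigl|\delta_{n_k}(t_k+c_k)-\delta_{n_k}(t_k+\beta)\bigr|\le\gamma_{n_k}(|c_k-\beta|)\le|c_k-\beta|\le\|\bar v_{n_k}-h\|_\infty$, so it suffices to bound $\delta_{n_k}(t+\beta)-\delta_{n_k}(t)-\beta$ uniformly over $t>0$ and $\beta\in[0,2C]$. Here I would combine Assumption~\ref{assump not assymp} with the fact that $g_n(\alpha):=\sup_{t>0}\bigl(\alpha-(\delta_n(t+\alpha)-\delta_n(t))\bigr)$ is non-decreasing in $\alpha\ge 0$: indeed the algebraic identity
\[
\alpha_2-\bigl(\delta_n(t+\alpha_2)-\delta_n(t)\bigr)=\bigl[\alpha_1-(\delta_n(t+\alpha_1)-\delta_n(t))\bigr]+\bigl[(\alpha_2-\alpha_1)-(\delta_n(t+\alpha_2)-\delta_n(t+\alpha_1))\bigr]
\]
exhibits the left side as a sum of two non-negative terms (since $0\le\delta_n(s+c)-\delta_n(s)\le\gamma_n(c)\le c$), and taking the supremum over $t$ gives $g_n(\alpha_1)\le g_n(\alpha_2)$. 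Therefore $\sup_{\beta\in[0,2C]}g_n(\beta)=g_n(2C)\to 0$ by Assumption~\ref{assump not assymp}, the borderline value $t=0$ being handled by continuity of $\delta_n$; this establishes the estimate.

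Substituting $v_{n_k}=\bar v_{n_k}+M_{n_k}$ into $v_{n_k}(x)=\max_{a\in\Psi(x)}u(x,a)+\delta_{n_k}(v_{n_k}(f(x,a)))$ and applying the estimate gives
\[
\bar v_{n_k}(x)+\bigl(M_{n_k}-\delta_{n_k}(M_{n_k})\bigr)=\max_{a\in\Psi(x)}\bigl(u(x,a)+h(f(x,a))\bigr)+\varepsilon_k(x),\qquad\sup_x|\varepsilon_k(x)|\to 0.
\]
Letting $k\to\infty$, the left side tends pointwise to $h(x)+\bar u$ while the right side tends to $\max_{a\in\Psi(x)}\bigl(u(x,a)+h(f(x,a))\bigr)$, a continuous function of $x$ by Theorem~\ref{maximum theorem}. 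Hence $h\in C_b(X,\mathbb{R})$ satisfies $h(x)=\max_{a\in\Psi(x)}u(x,a)-\bar u+h(f(x,a))$ with $\bar u\in[0,\|u\|_\infty]$, as claimed. I expect the main obstacle to be precisely the uniformity in the core estimate: Assumption~\ref{assump not assymp} only gives $\delta_n(t+\alpha)-\delta_n(t)\to\alpha$ for each \emph{fixed} increment $\alpha$, whereas the increment $c_k(x,a)$ both depends on $n$ and ranges over the whole interval $[0,2C]$; the two ingredients that rescue the argument are the contraction-modulus bound $\gamma_n(s)\le s$, used to absorb the discrepancy $|c_k-\beta|$, and the monotonicity of $g_n$, used to promote the pointwise-in-$\alpha$ convergence to uniform convergence on $[0,2C]$.
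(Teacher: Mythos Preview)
Your proof is correct and follows the same overall strategy as the paper: normalize $\bar v_n=v_n-M_n$, extract a subsequence by Arzel\`a--Ascoli and compactness of $[0,\|u\|_\infty]$, and pass to the limit in Bellman's equation. The only real difference lies in how you justify the key convergence
\[
\delta_{n}\bigl(v_{n}(f(x,a))\bigr)-\delta_{n}(M_{n})\longrightarrow h(f(x,a)).
\]
The paper argues pointwise: for each fixed $y$, uniform convergence $\bar v_n\to h$ gives $v_n(y)-h(y)-\varepsilon\le M_n\le v_n(y)-h(y)+\varepsilon$ for large $n$; applying the increasing $\delta_n$ and then Assumption~\ref{assump not assymp} at the \emph{fixed} increment $\alpha=-h(y)\pm\varepsilon$ yields the limit. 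You instead prove a uniform (in $x,a$) version of the estimate, first absorbing the error $|c_k-\beta|\le\|\bar v_{n_k}-h\|_\infty$ via the modulus bound $\gamma_n(s)\le s$, and then upgrading Assumption~\ref{assump not assymp} from a single $\alpha$ to the whole interval $[0,2C]$ by showing that $g_n(\alpha)=\sup_{t>0}\bigl(\alpha-(\delta_n(t+\alpha)-\delta_n(t))\bigr)$ is non-decreasing in $\alpha$. Your route is slightly longer but buys explicit uniformity, which makes the interchange of the limit with $\max_{a\in\Psi(x)}$ transparent; in the paper this interchange is left implicit. Both arguments ultimately rest on the same ingredients (monotonicity of $\delta_n$, the contraction modulus, and the uniform-in-$t$ convergence of Assumption~\ref{assump not assymp}), so the difference is one of packaging rather than of substance.
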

\begin{proof}
We consider the sequence of functions $\bar{v}_n (x)=v_n(x) -M_n$
and the discounted limit $\delta_{n} \to Id_{D}$.
Since each $v_n$ satisfy Bellman's equation we have
\begin{align*}
  v_n(x)
  &=
  \max_{a \in \Psi(x)} u(x, a)+\delta_{n}(v_{n}(f(x,a))
  \\
  v_n(x)-M_n
  &=
  \max_{a \in \Psi(x)} u(x, a)+\delta_{n}(v_{n}(f(x,a)) -M_n
  \\
  \bar{v}_n (x)
  &=
  \max_{a \in \Psi(x)} u(x, a)+ \delta_{n}(v_{n}(f(x,a))- \delta_{n}(M_n) + \delta_{n}(M_n) - M_n
  \\
  \bar{v}_n (x) &=
  \max_{a \in \Psi(x)} u(x, a) -
  \left(M_n-\delta_{n}(M_n)\right)+ \delta_{n}(v_{n}(f(x,a))- \delta_{n}(M_n).
\end{align*}
From Lemma~\ref{constant bellman eq} we know that
$0 \leq M_n - \delta_{n}(M_n)  \leq \| u \|_{\infty}$ so,
possibly choosing a subsequence we can find $\bar{u} \in [0 , \| u \|_{\infty} ]$
such that  $M_n - \delta_{n}(M_n) \to \bar{u}$ when $n \to \infty$.
From Lemma~\ref{boundness bellman} and Lemma~\ref{equicontinuous bellman}
the sequence $\bar{v}_n$ is uniformly bounded and equicontinuous.
From Arzel\`a-Ascoli's theorem we obtain a subsequence
(that we still calling $\bar{v}_n$ to avoid extra indexes)
that converges to a continuous function $h$ satisfying
$h(x)= \max_{a \in \Psi(x)} u(x, a)- \bar{u} +  h(f(x,a)),$
if $\delta_{n}(v_{n}(f(x,a)))- \delta_{n}(M_n) \to h(f(x,a))$
when $ v_n(x)-M_n \to h(x)$. To prove that we recall that,
from the definition of variable discount function $\delta_n$,
it is increasing so we have
\[
\delta_{n}(M_n)-\delta_{n}(v_{n}(x))
\leq
\gamma_n (M_n- v_n(x)).
\]
Since
$M_n - v_n(x) \to -h(x)\geq 0$,
we can conclude that for $n$ big enough that
$-h(x)- \varepsilon \leq  M_n - v_n(x) \leq  -h(x) +\varepsilon$,
or equivalently
\[
v_n(x)-h(x)- \varepsilon \leq  M_n \leq  v_n(x)-h(x) +\varepsilon.
\]
Using the fact that $\delta_n$ is increasing we obtain
\[
\delta_{n}(v_n(x)-h(x)- \varepsilon)
\leq
\delta_{n}(M_n)
\leq
\delta_{n}(v_n(x)-h(x) +\varepsilon).
\]
By adding $-\delta_{n}(v_n(x))$, we obtain
\begin{align*}
\delta_{n}(v_n(x)-h(x)- \varepsilon)-\delta_{n}(v_n(x)
&\leq
\delta_{n}(M_n) -\delta_{n}(v_n(x))
\\
&\leq
\delta_{n}(v_n(x)-h(x) +\varepsilon)-\delta_{n}(v_n(x).
\end{align*}
Now, from Assumption~\ref{assump not assymp},
it follows that
\[
\lim_{n\to \infty} \delta_{n}(M_n) -\delta_{n}(v_n(x))=-h(x).
\qedhere
\]
\end{proof}

\begin{remark}\label{other discounts}
We can consider other families of $\delta_{n}$'s assuming
the same hypothesis except for Assumption~\ref{assump not assymp}.
For example, the family $\delta_n(t)=(-1+ \sqrt{1+t})$ satisfies:
for any fixed $\alpha >0$, we have
$\lim_{n\to \infty}\delta_{n}(t +\alpha) - \delta_{n}(t)=0$,
uniformly on $t>0$.
In this case, the discount limit will produce an equation
$h(x)= \max_{a \in \Psi(x)} u(x, a)- \bar{u}$,
having a very different meaning.
\end{remark}

\begin{remark}\label{equivalence subaction and HJB equation}
In ergodic optimization this function $h$ is called a calibrated subaction
of $u$ with respect to the dynamics $f$.
In the theory of viscosity solutions of the Hamilton-Jacobi-Bellman equations,
the equation $h(x)= \max_{a \in \Psi(x)} u(x, a)- \bar{u} +  h(f(x,a))$
can be rewritten as
\[
\bar{u}
=
\max_{a \in \Psi(x)} u(x, a) +  h(f(x,a)) -h(x)
=
\max_{a \in \Psi(x)} d_{x}h(a) + u(x, a)= H(x, d_{x}h),
\]
where the discrete differential is
$d_{x}h(a)= h(f(x,a)) -h(x)$
and the Hamiltonian $H$ is the Legendre transform of $u$.
\end{remark}

Recall that the set of holonomic probability measures is defined by
\[
\mathcal{H}
:=
\left\{
\mu \in \mathscr{P}(\Omega)
\; | \;
\int_{\Omega}d_{x}g (a)\, d\mu(x,a)=0, \; \forall g \in C(A,\mathbb{R})
\right\}.
\]

\begin{theorem}\label{ergodic optim for IFS}
Assume that the hypothesis of Theorem~\ref{bellman subaction}
are satisfied and put
\[
\check{u}
=
\sup_{\mu \in \mathcal{H}} \int_{\Omega} u(x,a)\, d\mu(x,a).
\]
If $\Omega$ is compact then $\check{u}=\bar{u}$, in particular,
the number given by Theorem~\ref{bellman subaction} is unique.
\end{theorem}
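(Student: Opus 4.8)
The plan is to prove the two inequalities $\check{u}\le\bar{u}$ and $\check{u}\ge\bar{u}$ separately, using the calibrated subaction $h$ produced by Theorem~\ref{bellman subaction}. First I would record two elementary consequences of the equation $h(x)=\max_{a\in\Psi(x)}u(x,a)-\bar{u}+h(f(x,a))$: for \emph{every} feasible pair $(x,a)$ (i.e. $a\in\Psi(x)$) one has $u(x,a)\le\bar{u}-d_{x}h(a)$ with $d_{x}h(a):=h(f(x,a))-h(x)$, while, since each $\Psi(x)$ is compact and $u,h,f$ are continuous, the maximum is attained and equality holds for a maximizing action. I would also note that compactness of $\Omega$ forces $X$ to be compact (the coordinate map $h_{x}(\bar a)\mapsto x_{0}$ is continuous and, since every $\Psi(x)$ is nonempty, surjective), so $h\in C(X,\mathbb{R})$ is bounded. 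For the upper bound, fix $\mu\in\mathcal{H}$: the inequality $u(x,a)\le\bar{u}-d_{x}h(a)$ holds $\mu$-a.e. on $\Omega$, and the holonomy constraint applied to $g=h$ gives $\int_{\Omega}d_{x}h(a)\,d\mu=0$, hence
\[
\int_{\Omega}u(x,a)\,d\mu(x,a)\ \le\ \bar{u}-\int_{\Omega}d_{x}h(a)\,d\mu(x,a)\ =\ \bar{u};
\]
taking the supremum over $\mu\in\mathcal{H}$ yields $\check{u}\le\bar{u}$.

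For the lower bound I would construct, by a Krylov--Bogolyubov argument, a holonomic measure attaining $\bar{u}$. Starting from any $x_{0}\in X$, successively choose $a^{*}_{n}\in\Psi(x_{n})$ realizing the maximum in the subaction equation at $x_{n}$ (with $x_{n+1}:=f(x_{n},a^{*}_{n})$); as in the proof of Theorem~\ref{max op solution}(b) this produces a plan $a^{*}\in\Pi(x_{0})$ with $u(x_{n},a^{*}_{n})=h(x_{n})-h(x_{n+1})+\bar{u}$ for all $n$. Telescoping and using boundedness of $h$,
\[
\frac{1}{N}\sum_{n=0}^{N-1}u(x_{n},a^{*}_{n})=\bar{u}+\frac{h(x_{0})-h(x_{N})}{N}\ \xrightarrow[N\to\infty]{}\ \bar{u}.
\]
Now set $\mu_{N}:=\frac1N\sum_{k=0}^{N-1}\delta_{\hat{\sigma}^{k}(h_{x_{0}}(a^{*}))}\in\mathscr{P}(\Omega)$. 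Since $\Omega$ is compact, $\mathscr{P}(\Omega)$ is weak-$*$ compact, so $\mu_{N_{j}}\rightharpoonup\mu$ along a subsequence, and the bound $\|\mu_{N}-\hat{\sigma}_{*}\mu_{N}\|\le 2/N$ shows $\mu$ is $\hat{\sigma}$-invariant. Invariance gives, for every $g\in C(X,\mathbb{R})$, $\int_{\Omega}g(f(x,a))\,d\mu=\int_{\Omega}g(x)\,d\mu$ (the coordinate $x_{1}$ is $x_{0}$ composed with $\hat{\sigma}$), i.e. $\int_{\Omega}d_{x}g(a)\,d\mu=0$, so $\mu\in\mathcal{H}$. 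Finally, continuity of $u$ on $X\times A$ gives $\int_{\Omega}u\,d\mu=\lim_{j}\frac{1}{N_{j}}\sum_{k=0}^{N_{j}-1}u(x_{k},a^{*}_{k})=\bar{u}$, whence $\check{u}\ge\bar{u}$. Combining the two inequalities, $\check{u}=\bar{u}$; and since $\check{u}$ is defined with no reference to the subsequences used to extract $\bar{u}$ and $h$ in Theorem~\ref{bellman subaction}, the value $\bar{u}$ is uniquely determined.

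The hard part will be the passage from the empirical averages $\mu_{N}$ to an honest holonomic measure: one must (a) invoke the compactness hypothesis on $\Omega$ to extract a weak-$*$ limit, (b) verify $\hat{\sigma}$-invariance of the limit, and (c) observe that $\hat{\sigma}$-invariance implies the holonomy identity $\int_{\Omega}d_{x}g(a)\,d\mu=0$. A secondary point, easy but worth stating, is that the existence of the optimal plan $a^{*}$ rests on compactness of the fibers $\Psi(x)$ together with continuity of $u$, $h$ and $f$, exactly as in Theorem~\ref{max op solution}(b); everything else is the standard subaction/holonomy duality computation.
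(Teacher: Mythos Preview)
Your proof is correct and follows essentially the same two-step route as the paper: integrate the subaction inequality $u(x,a)\le\bar u-d_xh(a)$ against an arbitrary $\mu\in\mathcal H$ for the upper bound, then build an optimal orbit and take a weak-$*$ limit of its empirical measures for the lower bound. The only cosmetic difference is that you place the empirical measures on $\Omega$ and pass through $\hat\sigma$-invariance to obtain holonomy, whereas the paper works directly with the averages $\mu_k(g)=\tfrac1k\sum g(x_i,a_i)$ and declares the holonomy check ``a straightforward calculation''; your added remark that compactness of $\Omega$ forces $h$ to be bounded is a useful clarification the paper leaves implicit.
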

\begin{proof}
From Remark~\ref{equivalence subaction and HJB equation} we know that
$\bar{u}= \max_{a \in \Psi(x)} d_{x}h(a) + u(x, a) \geq d_{x}h(a) + u(x, a)$
and integrating with respect to $\mu \in \mathcal{H}$ we obtain
\[
\bar{u} \geq \int_{\Omega}d_{x}h (a) d\mu(x,a) +
\int_{\Omega}u(x, a) d\mu(x,a)
=
\int_{\Omega}u(x, a) d\mu(x,a),
\]
thus $\bar{u} \geq \check{u}$.

To show the equality we will built a holonomic maximizing probability.
Inductively, we choose $a_0 \in \Psi(x)$ such that
$\bar{u}=  d_{x}h(a_0) + u(x, a_0)$, $a_1 \in \Psi(x_1)$ such that
$\bar{u}=  d_{x_1}h(a_1) + u(x_1, a_1)$, and so on.
Notice that $x_0=x$ and $x_{n+1}=f(x_{n},a_{n})$,
for all $n \geq 0$.
Define a probability measure $\mu_{k}$ by
\[
\mu_{k}(g) := \frac{1}{k}\sum_{i=0}^{k-1} g(x_i, a_i)
\]
then, adding the above equations we get
$k \bar{u} = \sum_{i=0}^{k-1} d_{x_i}h(a_i) + u(x_i, a_i)$ or equivalently
\begin{align*}
\bar{u}
&=
\frac{1}{k}\sum_{i=0}^{k-1} d_{x_i}h(a_i) +
\frac{1}{k} \sum_{i=0}^{k-1} u(x_i, a_i)
\\[0.3cm]
&=
\frac{h(x_{k-1})- h(x_0)}{k} + \int_{\Omega} u(x,a)\, d\mu_{k}(x,a).
\end{align*}
Since $h$ is bounded and $\Omega$ is compact,
up to subsequence, we can assume that
$\mu_{k}\rightharpoonup \mu$.
A straightforward calculation shows that  $\mu \in \mathcal{H}$
and
\[
\bar{u} = \int_{\Omega}u(x, a)\, d\mu(x,a).
\qedhere
\]
\end{proof}

\begin{theorem} \label{ruelle theorem discounted}
If the assumptions of Lemmas \ref{boundness bellman} and \ref{equicontinuous bellman}, and Assumption~\ref{assump not assymp} are assumed to hold. Then
there exists a value $k \in [0 , \| u \|_{\infty} ]$ and a function $h$
given by  $h(x)=\ln \int_{a \in \Psi(x)} e^{u(x, a)+h(f(x,a)) - k} d\nu_{x}(a),$
such that $\rho:=e^{k}$  and $\varphi:= e^{h(x)}$ are a
positive eigenvalue and a positive and continuous eigenfunction, respectively,
for Ruelle operator, i.e.,
\[
e^{k} e^{h(x)}=\int_{a \in \Psi(x)} e^{u(x, a)} e^{h(f(x,a))}
\, d\nu_{x}(a).
\]
\end{theorem}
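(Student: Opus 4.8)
The plan is to run the argument of Theorem~\ref{bellman subaction} almost verbatim, with the Bellman operator replaced by the discounted transfer operator $P_{u,\delta_n}$ and the maximum replaced by the integral against $\nu_x$. First I would introduce, for each $n$, the unique fixed point $w_n=w_n^*$ of $P_{u,\delta_n}$ furnished by Theorem~\ref{disc transf op solution}, set $M_n=\sup_x w_n(x)$ and $\bar w_n=w_n-M_n$, and recall the three facts already available: $0\le M_n-\delta_n(M_n)\le\|u\|_\infty$ (Lemma~\ref{constant bellman eq}), $-2C\le\bar w_n\le 0$ (Lemma~\ref{boundness bellman}), and equicontinuity of the family $(\bar w_n)_n$ (Lemma~\ref{equicontinuous bellman}). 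Passing to a subsequence, $M_n-\delta_n(M_n)\to k$ for some $k\in[0,\|u\|_\infty]$; passing to a further subsequence and invoking Arzel\`a--Ascoli, $\bar w_n\to h$ uniformly, with $h$ continuous, bounded and $h\le 0$.

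Next I would normalize the fixed-point equation of $w_n$: using $\ln\int e^{g}\,d\nu-c=\ln\int e^{g-c}\,d\nu$ one gets
\[
\bar w_n(x)
=\ln\int_{a\in\Psi(x)} e^{\,u(x,a)+\delta_n(w_n(f(x,a)))-\delta_n(M_n)\,}\, d\nu_x(a)
-\bigl(M_n-\delta_n(M_n)\bigr).
\]
The only thing that remains is to show the exponent converges, namely $\delta_n(w_n(f(x,a)))-\delta_n(M_n)\to h(f(x,a))$, uniformly in $a\in\Psi(x)$, so that the limit may be carried inside the integral. This is precisely the estimate at the end of the proof of Theorem~\ref{bellman subaction}: for $\varepsilon>0$ and $n$ large, the uniform convergence $\bar w_n\to h$ yields $w_n(y)-h(y)-\varepsilon\le M_n\le w_n(y)-h(y)+\varepsilon$ for all $y$; monotonicity of $\delta_n$ then sandwiches $\delta_n(M_n)-\delta_n(w_n(y))$ between $\delta_n\bigl(w_n(y)+\alpha\bigr)-\delta_n(w_n(y))$ for the two shifts $\alpha=-h(y)\pm\varepsilon$; and Assumption~\ref{assump not assymp} drives both bounds to $-h(y)$, i.e. $\delta_n(w_n(y))-\delta_n(M_n)\to h(y)$.

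The genuine obstacle is the \emph{uniformity} of this last convergence over $y=f(x,a)$, since the shift $\alpha=-h(y)$ varies with $y$. Here I would exploit that $h$ is bounded, so $-h(y)$ ranges over a compact interval $[0,2C]$, together with the monotonicity of $\alpha\mapsto\delta_n(\cdot+\alpha)-\delta_n(\cdot)$, to upgrade Assumption~\ref{assump not assymp} (uniform in $t$ for each fixed $\alpha$) to uniformity in $(t,\alpha)$ over $t>0$ and $\alpha$ in a bounded range, by a finite-net squeeze of Dini type. Granting this, the exponents $u(x,a)+\delta_n(w_n(f(x,a)))-\delta_n(M_n)$ converge uniformly in $a$ to $u(x,a)+h(f(x,a))$; since all these quantities stay in a fixed bounded interval (because $|\delta_n(w_n(f(x,a)))-\delta_n(M_n)|\le\gamma_n(|\bar w_n(f(x,a))|)\le 2C$), $\exp$ is uniformly continuous there, the exponentials converge uniformly, and the integrals against the probability measures $\nu_x$ converge accordingly.

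Passing to the limit in the displayed identity and using continuity of $\ln$ at the positive limiting value then gives
\[
h(x)=\ln\int_{a\in\Psi(x)} e^{\,u(x,a)+h(f(x,a))-k\,}\, d\nu_x(a),
\]
and exponentiating, with $\rho:=e^{k}>0$ and $\varphi:=e^{h}>0$ (continuous since $h$ is), this is $e^{k}e^{h(x)}=\int_{a\in\Psi(x)} e^{u(x,a)}e^{h(f(x,a))}\, d\nu_x(a)$, i.e. $L\varphi=\rho\,\varphi$, with $k\in[0,\|u\|_\infty]$ as claimed. In short, the only non-routine step is the interchange of limit and integral, which — unlike the supremum in Theorem~\ref{bellman subaction} — truly requires the uniform control of the exponent described above; the rest is a transcription of the earlier proof.
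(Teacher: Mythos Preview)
Your proposal is correct and follows essentially the same route as the paper's own proof: normalize the fixed-point equation for $w_n$ by subtracting $M_n$, extract $k$ as a subsequential limit of $M_n-\delta_n(M_n)$ via Lemma~\ref{constant bellman eq}, extract $h$ as an Arzel\`a--Ascoli limit of $\bar w_n$ via Lemmas~\ref{boundness bellman} and~\ref{equicontinuous bellman}, and then use the monotone sandwich plus Assumption~\ref{assump not assymp} to force $\delta_n(w_n(y))-\delta_n(M_n)\to h(y)$. In fact you are more careful than the paper on one point: the paper does not spell out the uniformity-in-$a$ needed to carry the limit through the integral $\int_{\Psi(x)}\cdots\, d\nu_x$, whereas your Dini-type finite-net argument (exploiting that $-h$ ranges in a compact interval and that $\alpha\mapsto\delta_n(t+\alpha)-\delta_n(t)$ is monotone) actually fills this gap.
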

\begin{proof}Consider
\[
M_n= \sup_{x\in X} w_n(x)
\quad\text{and}\quad
w_{n}(x)=\ln \int_{a \in \Psi(x)} e^{u(x, a)+\delta_{n}(w_{n}(f(x,a)))}
\, d\nu_{x}(a).
\]
Take the sequence of functions $\bar{w}_n (x)=w_n(x) -M_n$ and analyze
the discounted limit $\delta_{n} \to Id_{D}$.
Since each $w_n$ satisfies
the discounted transfer operator equation we have
\[
w_n(x)=\ln \int_{a \in \Psi(x)} e^{u(x, a)+\delta_{n}(w_{n}(f(x,a)))} d\nu_{x}(a).
\]
From this equality follows that
\begin{align*}
  w_n(x)-M_n
  &=
  \ln \int_{a \in \Psi(x)} e^{u(x, a)+\delta_{n}(w_{n}(f(x,a)))-M_n}
  \, d\nu_{x}(a)
  \\
  &=
  \ln \int_{a \in \Psi(x)} e^{u(x, a)+\delta_{n}(w_{n}(f(x,a)))-\delta_{n}(M_n)
  + \delta_{n}(M_n)-M_n}
  \, d\nu_{x}(a).
\end{align*}
By taking exponential on both sides we get
\[
  e^{w_n(x)-M_n}
  =
  \int_{a \in \Psi(x)} e^{u(x, a)+\delta_{n}(w_{n}(f(x,a)))-\delta_{n}(M_n) -
  (M_n-\delta_{n}(M_n))} d\nu_{x}(a)
\]
which in turn implies
\[
e^{M_n-\delta_{n}(M_n)} e^{w_n(x)-M_n}
=
\int_{a \in \Psi(x)} e^{u(x, a)}
e^{\delta_{n}(w_{n}(f(x,a)))-\delta_{n}(M_n) } d\nu_{x}(a).
\]

From Lemma~\ref{constant bellman eq} we know that
$0 \leq M_n - \delta_{n}(M_n)  \leq \| u \|_{\infty}$ so, possibly assuming
a subsequence we can find $k \in [0 , \| u \|_{\infty} ]$ such that
$M_n - \delta_{n}(M_n) \to k$ when $n \to \infty$. From Lemma~\ref{boundness bellman}
and Lemma~\ref{equicontinuous bellman} the sequence $\bar{w}_n$ is uniformly bounded
and equicontinuous. From Arzel\`a-Ascoli's theorem we obtain
a subsequence (that we still calling $\bar{w}_n$ to avoid extra indexes)
that converges to a continuous function $h$ satisfying
$e^{k} e^{h(x)}=\int_{a \in \Psi(x)} e^{u(x, a)} e^{h(f(x,a))} d\nu_{x}(a)$
if $\delta_{n}(w_{n}(f(x,a)))- \delta_{n}(M_n) \to h(f(x,a))$ when $ w_n(x)-M_n \to h(x)$.
To prove this we recall that, from the definition of variable
discount function $\delta_n$, it is increasing so we have
\[
\delta_{n}(M_n)-\delta_{n}(w_{n}(x))
\leq
\gamma_n (M_n- w_n(x))
\]
Since
$M_n - w_n(x) \to -h(x)\geq 0$, we can conclude that for $n$ big enough we
have
$-h(x)- \varepsilon \leq  M_n - w_n(x) \leq  -h(x) +\varepsilon$,
or equivalently
\[
w_n(x)-h(x)- \varepsilon \leq  M_n \leq  w_n(x)-h(x) +\varepsilon.
\]
Using the fact that $\delta_n$ is increasing we obtain
\[
\delta_{n}(w_n(x)-h(x)- \varepsilon)
\leq
\delta_{n}(M_n) \leq  \delta_{n}(w_n(x)-h(x) +\varepsilon),
\]
and by adding $-\delta_{n}(w_n(x))$, we obtain
\begin{align*}
\delta_{n}(w_n(x)-h(x)- \varepsilon)-\delta_{n}(w_n(x)
&\leq
\delta_{n}(M_n) -\delta_{n}(w_n(x))
\\
&\leq
\delta_{n}(w_n(x)-h(x) +\varepsilon)-\delta_{n}(w_n(x).
\end{align*}
Now, from Assumption~\ref{assump not assymp} it follows that
\[
\delta_{n}(M_n) -\delta_{n}(w_n(x))=-h(x).
\qedhere
\]
\end{proof}

\begin{remark} \label{add cte by u dont change}
All the results of this section were obtained under
the assumption that $u(x,a) \geq 0, \; \forall x, a$.
If we start with a bounded $u$ we can pick a constant $c$ such that $u'=u+c\geq 0$.
We claim that this hypothesis is actually not a restriction neither changes our results.
In the regularity section, all the results depends on
$\hat{u}'(x,y,a)=|(u+c) (x,a)- (u+c)(y,a)|=|u(x,a)- u(y,a)|=\hat{u}(x,y,a)$
and it does not changes under addition of a constant.
In Theorem~\ref{bellman subaction}, we have
$h(x)= \max_{a \in \Psi(x)} u'(x, a)- \bar{u}' +  h(f(x,a)),$ and
$\bar{u}'=\sup_{\mu \in \mathcal{H}} \int_{\Omega} u'(x,a) d\mu(x,a)$,
so $h(x)= \max_{a \in \Psi(x)} u(x, a) -(\bar{u}'-c) +  h(f(x,a))$ and
$\bar{u}'=\sup_{\mu \in \mathcal{H}} \int_{\Omega} u(x,a) +c d\mu(x,a)$
that is
\[
\bar{u}' -c
=
\sup_{\mu \in \mathcal{H}} \int_{\Omega} u(x,a) d\mu(x,a)
=
\bar{u},
\]
thus the equation holds for $u$,
$h(x)= \max_{a \in \Psi(x)} u(x, a) -\bar{u} +  h(f(x,a))$
with the same solution $h$. Analogously, in
Theorem~\ref{ruelle theorem discounted}, if we replace an initial $u$ that
can be negative by $u'=u+c$, we obtain
\begin{align*}
e^{k} e^{h(x)}
&=
\int_{a \in \Psi(x)} e^{u'(x, a)} e^{h(f(x,a))} d\nu_{x}(a)
=
\int_{a \in \Psi(x)} e^{u(x, a) +c } e^{h(f(x,a))} d\nu_{x}(a)
\end{align*}
or equivalently
\[
e^{k-c} e^{h(x)}=\int_{a \in \Psi(x)} e^{u(x, a)} e^{h(f(x,a))} d\nu_{x}(a)
\]
which means that Theorem~\ref{ruelle theorem discounted} holds,
with the same eigenfunction $e^{h(x)}$ and a new eigenvalue $e^{k-c}$.
\end{remark}

\section{Applications to IFS and Related Problems}

\subsection{Subshifts of finite type}
\label{subshift case}
Let $A=\{0,1,\ldots, m-1\}$ be an alphabet and
$C=(c_{ij})_{m\times m}$ an adjacency matrix with entries in $\{0,1\}$.
Let $X=\Sigma_{C} \subseteq A^{\mathbb{N}}$
be the set of all infinite admissible sequences.
To get information about thermodynamic formalism
in the setting of sequence decision-making processes,
for each $x \in \Sigma_{C}$, we put
$\Psi(x)=\{ i \in A \; | \; c_{i,x_{0}}=1\}$
and we recover the dynamics by considering
the maps $f(x,a)=(a, x_0, x_1, \ldots)$,
for each $a \in \Psi(x)$.
Given a H\"older potential $g:X \to \mathbb{R}$,
we define $u(x,a)= g(f(x,a))$.
Considering a variable discount $\delta$ we obtain a
sequential decision-making process $S=\{X, A, \Psi, f, u, \delta\}$.

\subsection{Dynamics of expanding endomorphisms}
\label{Endomorphism case}
Let $(X,d)$ be a complete metric space and $T: X \to X$ a continuous
expanding endomorphism. Suppose that for each point $x \in X$
there is finite set of injective domains $J_{0},J_{1}, \ldots, J_{n-1}$
for $T$.
Take $A:=\{0, 1, \ldots,n-1\}$ and for each $x \in X$ define
$\Psi(x):=\{a \in A \, | \, x \in T^{-1}|_{J_{a}}(X)\}$.
The function $f$ is defined as follows
$f(x,a)=T^{-1}|_{J_{a}}(x)$, for each $a \in \Psi(x)$.
Given a H\"older potential $\varphi: X \to \mathbb{R}$
we define $u(x,a):=\varphi(f(x,a))$ and $\delta(t)=\beta t$.
Then $S=\{X, A, \Psi, f, u, \delta\}$ is a sequential decision-making
process associated with the thermodynamical formalism
for the endomorphism $T$ with a potential $\varphi$.
	
Following the classical approach in thermodynamical formalism as in
\cite{MR1841880} and \cite{MR3377291} we have that
the Bellman and the discounted transfer operators
are given by
\[
B(v)(x):=  \sup_{T(y)=x} \varphi(y)+\beta v(y)
\quad\text{and}\quad
P(v)(x):=\ln \sum_{T(y)=x} e^{\varphi(y)+\beta v(y)}.
\]

In \cite{MR1841880} and \cite{MR3377291} the author shows that
the discounted limit of the first one
provides a calibrated subaction equation:
\[
v_{\infty}(x)
:=
\sup_{T(y)=x} \varphi(y) - m_{\infty} + v_{\infty}(y).
\]
It is well known (see \cite{MR1841880} or \cite{MR3701349})
that a measure $\mu_{max}$ satisfying
\[
m_{\infty}
=
\sup_{T^{*}\mu=\mu} \int_{X} g\, d\mu
=
\int_{X} \varphi\,  d\mu_{max},
\]
is supported in
$\{y \in X \, | \, v_{\infty}(T(y))= \varphi(y)-m_{\infty} + v_{\infty}(y) \}$.
It is also well known
(see \cite{MR1841880} or \cite{MR3377291})
that the discounted limit of the second one gives
a positive eigenfunction $e^{v_{\infty}(x)}$ and an
maximal eigenvalue $e^{k}$ (which is the spectral radius) of the Ruelle operator,
that is,
\[
e^{k} e^{v_{\infty}(x)}=\sum_{T(y)=x} e^{\varphi(y)} e^{v_{\infty}(y)}.
\]

\subsection{IFS with Weights and Thermodynamic Formalism}
\label{IFS case}
Let $(X, d)$ be a complete metric space,
and $(A, d_{A})$ an arbitrary metric space,
indexing a family of continuous functions
$\phi_{a}: X \to X$. Consider the IFS $(X, (\phi_{a})_{a\in A})$.
If in addition, a family of probability measures
$p_{a}: X \to [0,1]$, indexed in $A$, is given one can construct an
ordered triple $(X, (\phi_{a})_{a\in A}, (p_{a})_{a\in A} )$ which is called
an iterated function system with place dependent probabilities (IFSpdp).
To view such IFSpdp as sequential decision-making process
associate to this IFS, we take $\Psi(x)=A, \; \forall x \in X$,
consider the immediate return $u(x,a)=\ln p_{a}(x)$,
which is bounded from above if each $p_{a}$ is so, and is bounded from below
if each $p_{a}(x)>\alpha >0$.
If we consider the  dynamics $f(x,a)=\phi_{a}(x)$ and a discount function $\delta$
then $S=\{X, A, \Psi, f, u, \delta\}$ is a sequential decision-making process
associated with the thermodynamical formalism
of the IFSpdp $(X, \phi_{a}, p_{a})$.

Assuming the hypothesis of Lemmas
\ref{boundness bellman} and \ref{equicontinuous bellman},
Assumption~\ref{assump not assymp}, Theorem~\ref{bellman subaction},
and Remark~\ref{equivalence subaction and HJB equation}
we have that the equation
$b(x)= \max_{a \in \Psi(x)} \ln p_{a}(x)- \bar{u} +  b(f(x,a))$ can be rewritten as
\[
\bar{u}
=
\max_{a \in \Psi(x)} \ln p_{a}(x) +  b(f(x,a)) -b(x)
=
\max_{a \in \Psi(x)} d_{x}b(a) + u(x, a),
\]
where the discrete differential is $d_{x}b(a)= b(f(x,a)) -b(x)$ and
\[
\bar{u}
=
\sup_{\mu \in \mathcal{H}}
\int_{\Omega} \ln p_{a}(x)\,  d\mu(x,a)
\]
where the set of holonomic probabilities is given by
\[
\mathcal{H}
:=
\left\{
\mu \in \mathscr{P}(\Omega)
\; | \;
\int_{\Omega}d_{x}g (a)\, d\mu(x,a)=0, \; \forall g \in C(A,\mathbb{R})
\right\}.
\]

From Theorem~\ref{ruelle theorem discounted} there exists a value
$k \in [-\| u \|_{\infty} , \| u \|_{\infty} ]$ and a function $h$
such that  $\rho:=e^{k}$  and $\varphi:= e^{h(x)}$ are respectively a positive
eigenvalue and a positive and continuous eigenfunction for Ruelle's Operator
\[
e^{k} e^{h(x)}=\int_{a \in \Psi(x)} e^{u(x, a)} e^{h(f(x,a))} d\nu_{x}(a),
\]
or equivalently
\[
\int_{a \in \Psi(x)} p_{a}(x)  e^{h(f(x,a))} d\nu_{x}(a)=e^{k} e^{h(x)}.
\]

As a historical remark we shall mention that
the first version of the Ruele-Perron-Frobenius theorem for contractive IFS,
via shift conjugation, was obtained in \cite{MR1669203}.

\begin{theorem}\label{transl endom to IFS}
The IFS case encompasses the expanding endomorphism case, if
$A=\{0, 1, \ldots,n-1\}$, $\Psi(x)=\{j \in A \, | \, x \in T_{j}(X)\} $,
$f(x,a)=T^{-1}|_{J_{a}}$ and $u(x,a):=\varphi(f(x,a))$,
where $\varphi: X \to \mathbb{R}$ is a
H\"older potential, similarly to Example~\ref{Endomorphism case}.
In this case the IFSpdp
$(X, \phi_{a}, p_{a})$ is such that
$\phi_{a}(x)=f(x,a)$ and $p_{a}(x)=\exp(\varphi(f(x,a)))$.
\end{theorem}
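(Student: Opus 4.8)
The plan is to build, from the expanding-endomorphism data, a concrete iterated function system with place-dependent weights whose associated sequential decision-making process is \emph{literally} the sextuple $S=\{X,A,\Psi,f,u,\delta\}$ of Section~\ref{Endomorphism case}, and then to invoke the results already proved for the IFSpdp case.

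First I would set $A=\{0,1,\dots,n-1\}$, let $J_0,\dots,J_{n-1}$ be the finite collection of injective domains for $T$, and define the branch maps $\phi_a:=T^{-1}|_{J_a}$, which are genuine self-maps of (the relevant subsets of) $X$ because $T$ is expanding; this gives the IFS $(X,(\phi_a)_{a\in A})$. For the weights I would put $p_a(x):=\exp(\varphi(\phi_a(x)))$; since $\varphi$ is a bounded H\"older potential, each $p_a$ is a strictly positive bounded function, and by adding a constant to $\varphi$ one may if desired arrange $\ln p_a=\varphi\circ\phi_a\ge 0$ without changing anything essential, exactly as recorded in Remark~\ref{add cte by u dont change}. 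The feasible-action multifunction is $\Psi(x)=\{a\in A:\ x\in T_a(X)\}$; being described by a finite closed cover of $X$ it is non-empty and compact valued and continuous in the Hausdorff topology, so it fits Definition~\ref{def-DMP}. (The IFSpdp construction of Section~\ref{IFS case} was written with $\Psi\equiv A$ only for notational convenience; it goes through verbatim for a general continuous $\Psi$, which is the situation relevant here.)

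Next I would check the identifications term by term. By construction $f(x,a)=T^{-1}|_{J_a}(x)=\phi_a(x)$, and $u(x,a)=\varphi(f(x,a))=\varphi(\phi_a(x))=\ln p_a(x)$, while $A$, $\Psi$ and the discount $\delta(t)=\beta t$ are the same in both descriptions. The one substantive remark is that, for each $x$, the fibre over $x$ is recovered as
\[
\{\,f(x,a):a\in\Psi(x)\,\}=\{\,\phi_a(x):a\in\Psi(x)\,\}=T^{-1}(x),
\]
so that the Bellman and discounted transfer operators of the general framework (Definitions~\ref{Bellman operator} and \ref{discounted transfer operator}), with $\nu_x$ the normalized counting measure on $\Psi(x)$, reduce to the operators $B(v)(x)=\sup_{T(y)=x}\varphi(y)+\beta v(y)$ and $P(v)(x)=\ln\sum_{T(y)=x}e^{\varphi(y)+\beta v(y)}$ used in Section~\ref{Endomorphism case} --- up to the normalization term $\ln\#\Psi(x)$, which in the standard finite-branch setting is constant and hence harmless by Remark~\ref{add cte by u dont change}.

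Finally, once the two processes are identified, every conclusion for the IFSpdp case transfers automatically: Theorems~\ref{max op solution} and \ref{disc transf op solution} supply the fixed points $v^*,w^*$; since the inverse branches of an expanding map are uniform contractions, the H\"older (or Lipschitz) regularity of $\varphi$ feeds into Theorem~\ref{regularity implies domination} to yield uniform $\delta$-domination, and $\delta$-boundedness when $X$ is bounded; and Theorems~\ref{bellman subaction} and \ref{ruelle theorem discounted}, together with Remark~\ref{equivalence subaction and HJB equation}, then reproduce the calibrated subaction equation for $T$ and the maximal eigenvalue/eigenfunction equation for the Ruelle operator. I do not anticipate a genuine obstacle; the proof is a matching of definitions, and the only care needed is the bookkeeping around $\Psi$ and the normalization of $\nu_x$ noted above.
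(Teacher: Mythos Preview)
Your identification of the two sequential decision-making processes at the level of the sextuple and of the Bellman/transfer operators is correct and matches the paper's setup, including the observation that the Ruelle operator for the endomorphism is recovered up to the additive constant $-\ln n$ when $\nu_x$ is uniform counting measure. Where you diverge from the paper is in the last paragraph, where you write that the calibrated subaction equation for $T$ is ``reproduced'' automatically once the general theorems are invoked. This is the one place where the paper does real work beyond definition-matching, and it is precisely the step you skip.

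The issue is the constant. Theorem~\ref{bellman subaction} applied to the IFS process yields
\[
h(x)=\max_{a\in\Psi(x)}\varphi(f(x,a))-\bar u+h(f(x,a)),
\]
with $\bar u=\sup_{\mu\in\mathcal H}\int_\Omega u\,d\mu$ a supremum over \emph{holonomic} measures on $\Omega\subset X\times A$. The endomorphism subaction equation of Section~\ref{Endomorphism case} carries instead the constant $m_\infty=\sup_{T^*\mu=\mu}\int_X\varphi\,d\mu$, a supremum over $T$-\emph{invariant} measures on $X$. Identifying the operators does not by itself identify these two variational quantities; that requires relating the two classes of measures. The paper does this explicitly: it shows that the $X$-marginal $\pi^*\mu$ of any holonomic $\mu$ is $T$-invariant and that $\int_\Omega u\,d\mu=\int_X\varphi\,d\pi^*\mu$ (giving $\bar u\le m_\infty$), and then builds an optimal holonomic measure from the subaction equation to obtain equality. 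Your sentence ``I do not anticipate a genuine obstacle; the proof is a matching of definitions'' is therefore too optimistic: this measure-theoretic bridge is the substantive content of the theorem, and your proposal should include it.
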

\begin{proof}
We notice that $T\circ f(x,a) =x, \; \forall a \in A$.
The fact that $T$ is uniformly expanding implies that $f(\cdot,a)$
is a uniform contraction and,
the fact that $\varphi$ is H\"older implies bounded and domination conditions
of Theorem~\ref{regularity implies domination} are in hold.
Obviously, Bellman's equation is
\[
b(x)
=
\max_{a \in \Psi(x)} \ln p_{a}(x)- \bar{u} +  b(f(x,a))
=
\max_{a \in \Psi(x)} \varphi(f(x,a))- \bar{u} +  b(f(x,a))
\]
since $u(x,a)=\ln p_{a}(x)=\varphi(f(x,a))$.
It remains to show that $\bar{u}= m_{\infty}$.
Indeed, take any $g$ and  $\mu \in \mathcal{H}$ then
\begin{align*}
0
&=
\int_{\Omega}d_{x}(g\circ T) (a)\, d\mu(x,a)
\\
&=
\int_{\Omega}(g\circ T)(f(x,a)) -(g\circ T)(x)\, d\mu(x,a)
\\
&=
\int_{X}\int_{A} g(x) -g(T(x))\, d\mu_{x}(a)\, d\pi^{*}(\mu)(x)
\\
&=
\int_{X}g(x) -g(T(x))\, d\pi^{*}(\mu)(x),
\end{align*}
where $\pi^{*}$ is the push forward with respect to the projection in $X$.
Thus, $\pi^{*}(\mu)$ is a $T$-invariant measure,
that is $\pi^*(\mathcal{H}) \subseteq \{\eta\; | \; T\eta=\eta\}$. Moreover
\begin{align*}
\int_{\Omega}u(x,a)\, d\mu(x,a)
&=
\int_{\Omega}\varphi(f(x,a))\, d\mu(x,a)
\\
&=
\int_{\Omega}\varphi(f(x,a))-\varphi(x) + \varphi(x)\, d\mu(x,a)
\\
&=
\int_{\Omega}d_{x}(\varphi)(a) d\mu(x,a) + \int_{X} \varphi(x)\,
d\pi^{*}(\mu)(x)
\\
&=
0+ \int_{X} \varphi(x)\, d\pi^{*}(\mu)(x),
\end{align*}
thus showing that $\bar{u} \leq m_{\infty}$.
To obtain the equality we will construct a special
holonomic measure $\mu \in \mathcal{H}$ satisfying
\[
\int_{\Omega}u(x,a) d\mu(x,a)=m_{\infty}.
\]
We first observe that from the calibrated subaction equation
\[
v_{\infty}(x)=  \sup_{T(y)=x} \varphi(y) - m_{\infty} + v_{\infty}(y)
\]
which is equivalent to
\[
m_{\infty}=  \sup_{T(y)=x} \varphi(y) +  d_x v_{\infty}(y)
\]
we can obtain, proceeding similarly as in Theorem~\ref{ergodic optim for IFS},
an optimal holonomic measure $\mu \in \mathcal{H}$, such that
\[
\int_{\Omega}u(x,a) d\mu(x,a)=m_{\infty}.
\]
This shows that the calibrated subaction equation is
equivalent to the associated Bellman's equation.

Finally, notice that the equation
\[
\int_{a \in \Psi(x)} p_{a}(x)  e^{h(f(x,a))} d\nu_{x}(a)=e^{k} e^{h(x)},
\]
for the IFS is equivalent to,
\[
\int_{a \in \Psi(x)} e^{\varphi(f(x,a))}  e^{h(f(x,a))} d\nu_{x}(a)=e^{k} e^{h(x)}.
\]
Recalling that $T\circ f(x,a) =x, \; \forall a \in A$, we obtain
\[
\sum_{T(y)=x} e^{\varphi(y)-\ln{n}}  e^{h(y)}=e^{k} e^{h(x)},
\]
where $\nu_{x}(a)=(1/n)\sum_{j}\delta_{j}(a)$,
which is the same operator as considered in the endomorphism
case, up to the constant $(-\ln{n})$.
\end{proof}

\section*{Acknowledgments}
This study was financed in part by the Coordena\c c\~ao de Aperfei\c coamento
de Pessoal de N\'ivel Superior - Brasil (CAPES) - Finance Code 001.
L. Cioletti would like to acknowledge financial support by CNPq through project 310818/2015-0.

\newcommand{\etalchar}[1]{$^{#1}$}

\end{document}